\newcommand{\la}{\lambda}
\newcommand{\gt}{\textgreater}
\theoremstyle{plain}
\numberwithin{equation}{section}
\newtheorem{thm}{Theorem}[section]
\newtheorem{lem}[thm]{Lemma}
\newtheorem{prop}[thm]{Proposition}
\newtheorem{cor}[thm]{Corollary}
\theoremstyle{definition}
\newtheorem{alg}[thm]{Algorithm}
\newtheorem{ip}[thm]{Inverse Problem}
\theoremstyle{remark}
\begin{document}
\begin{center}
{\large\bf Inverse Sturm-Liouville problem with singular potential and spectral\\[0.2cm] parameter in the boundary conditions}
\\[0.2cm]
{\bf Chitorkin E.E., Bondarenko N.P.} \\[0.2cm]
\end{center}

\vspace{0.5cm}

{\bf Abstract.} This paper deals with the Sturm-Liouville problem that feature distribution potential, polynomial dependence on the spectral parameter in the first boundary condition, and analytical dependence, in the second one. We study an inverse spectral problem that consists in the recovery of the potential and the polynomials from some part of the spectrum. We for the first time prove local solvability and stability for this type of inverse problems. Furthermore, the necessary and sufficient conditions on the given subspectrum for the uniqueness of solution are found, and a reconstruction procedure is developed. Our main results can be applied to a variety of partial inverse problems. This is illustrated by an example of the Hochstadt-Lieberman-type problem with polynomial dependence on the spectral parameter in the both boundary conditions.

\medskip

{\bf Keywords:} inverse spectral problems; Sturm-Liouville operator; polynomials in the boundary conditions; entire functions in the boundary conditions; singular potential; local solvability; stability; Hochstadt-Lieberman problem.

\medskip

{\bf AMS Mathematics Subject Classification (2020):} 34A55 34B07 34B09 34B24 34L40    

\vspace{1cm}

\section{Introduction} \label{sec:intro}

In this paper, we consider the following boundary value problem $L$:
\begin{gather} \label{eqv}
-y'' + q(x) y = \lambda y, \quad x \in (0, \pi), \\ \label{bc}
p_1(\la)y^{[1]}(0) + p_2(\la)y(0) = 0, \quad f_1(\la)y^{[1]}(\pi) + f_2(\la)y(\pi) = 0,
\end{gather}
where $q(x)$ is a complex-valued distribution potential of the class $W_2^{-1}(0,\pi)$, that is, $q = \sigma'$, $\sigma \in L_2(0,\pi)$, $\la$ is the spectral parameter, $p_1(\la)$ and $p_2(\la)$ are relatively prime polynomials, $f_1(\la)$ and $f_2(\la)$ are  arbitrary analytical in the $\la$-plane functions, $y^{[1]} := y' - \sigma(x) y$ is the so-called quasi-derivative. 

We study the inverse problem that consists in the recovery of $\sigma(x)$, $p_1(\lambda)$, and $p_2(\lambda)$ from some part of the spectrum which can be called  a subspectrum. This paper aims to obtain the necessary and sufficient conditions for the uniqueness of solution and to prove local solvability and stability for the inverse problem.

% historical background

Inverse problems of spectral analysis involve reconstructing a particular differential operator, such as the Sturm-Liouville operator, using known information about its spectrum. These problems have practical uses across various scientific and engineering disciplines, including quantum mechanics, vibration theory, geophysics, chemistry, electronics. Inverse spectral theory for the Sturm-Liouville operators with constant coefficients in their boundary conditions has been studied fairly completely (refer to the monographs \cite{Mar77, Lev84, FY01, Krav20}). 
Inverse Sturm-Liouville problems featuring polynomials of the spectral parameter within the boundary conditions also have garnered attention in the mathematical literature, as evidenced by works \cite{Chu01, BindBr021, BindBr022, ChFr, FrYu, Wang13, YangWei18, Gul19, GulHL, Gul20-ams, Gul23, Chit, ChitBond, ChitBondStab, ChitBondStabMult}. 
Such kind of problems find their applications in modelling vibrations for various types of leads, heat conduction through a liquid-solid interface, some diffusion and electric processes (see \cite{Ful77} and references therein).

There also has been growing interest in exploring a novel category of inverse Sturm-Liouville problems, specifically those containing entire analytic functions of the spectral parameter within one of the boundary conditions (see, e.g., \cite{BondPart, BondStabAn, YangBondXu, Chit, KuzCauchy}). The problems of this type are closely related to the so-called partial inverse problems on intervals and on graphs, which generalize the famous Hochstadt-Lieberman problem \cite{HL}.
In general, partial inverse problems consist in the recovery of the differential expression parameters (e.g., the Sturm-Liouville potential) on a part of an interval or of a geometrical graph from spectral information, while the parameters on the remaining part of the domain are known a priori. It has been noticed that such problems can be reduced to complete inverse problems on the ``unknown'' part of the domain. In this case, one of the boundary conditions contains analytic functions that are constructed by using the parameters from the ``known'' part of the domain.
Basing on these ideas, the unified approach to various types of partial inverse problems has been developed (see the overview \cite{BondPart}). That approach is also applicable to the inverse transmission eigenvalue problem, which has been actively studied in recent yeas (see, e.g, \cite{GP17, BCK20}).
The inverse Sturm-Liouville problem with polynomials of the spectral parameter in one of the boundary conditions and with entire functions in the other one previously has been considered in the only paper \cite{Chit}. However, the results of \cite{Chit} are limited to uniqueness theorems, while the issues of solvability and stability for that type of inverse spectral problems remained open.

This work aims to create the inverse spectral theory for the boundary value problem \eqref{eqv}-\eqref{bc} with \textit{distribution} potential $q$ of the class $W_2^{-1}(0,\pi)$, which is wider than the class $L_2(0,\pi)$ of regular potentials considered in \cite{Chit}. The theory of direct and inverse Sturm-Liouville problems with distribution potentials has been developed in \cite{SavShkal03, Hry03, Sav10} are other studies. A feature of our problem is polynomial dependence on $\la$ in one of the boundary conditions and analytical dependence in the other one. For this type of inverse spectral problems, we for the first time obtain local solvability and stability. Furthermore, we find conditions on the given subspectrum that guarantee the uniqueness of the inverse problem solution. In contrast to \cite{Chit}, we prove not only the sufficiency but also the necessity of our conditions.

For the investigation of the inverse problem, we develop an approach based on the construction of a special vector functional sequence $\{v_n\}_{n \ge 1}$ related to the given data. We prove that the completeness of this system is necessary and sufficient for the uniqueness of the inverse problem solution. Next, we obtain a constructive procedure for recovering $\sigma(x)$, $p_1(\la)$, and $p_2(\la)$. Our algorithm reduces the inverse problem for \eqref{eqv}--\eqref{bc} with entire functions $f_1(\la)$ and $f_2(\la)$ to the reconstruction of the desired parameters from the Weyl function for the Sturm-Liouville problem with polynomials in one of the boundary conditions. The latter problem for the case of distribution potential has been studied in our previous papers \cite{ChitBond, ChitBondStab, ChitBondStabMult}.
To get sufficient conditions for the validity of our constructive procedure, we study the unconditional basicity of the system $\{v_n\}_{n \ge 1}$. We follow the ideas of \cite{BondStabAn}, which have been initially developed for the case of the Dirichlet boundary condition $y(0) = 0$. However, for polynomials of the spectral parameter in the boundary condition, the construction of the system $\{ v_n\}_{n \ge 1}$ and the study of its properties become more technically complicated. Furthermore, we find conditions on the functions $f_1(\la)$, $f_2(\la)$ and on the given subspectrum that guarantee local solvability and stability of the inverse spectral problem.

Our main results can be applied to a variety of partial inverse problems with polynomials in the boundary conditions. As an example, we consider the Hochstadt-Lieberman-type inverse problem with polynomial dependence of the spectral parameter in the both boundary conditions. This problem consists in the recovery of $\sigma(x)$ on a half-interval and of the polynomials in the first boundary condition from the spectrum, while $\sigma(x)$ on the other half-interval and the polynomials in the second boundary condition are known a priori.
It is worth mentioning that problems of this kind were previously investigated in \cite{Wang13, GulHL, Chit}, where uniqueness theorems for them were obtained. Applying our main results, we directly get not only uniqueness but also local solvability and stability for the Hochstadt-Lieberman-type inverse problem. More precisely, we find a maximal number of eigenvalues that can be excluded from the spectrum so that the remaining subspectrum uniquely determines the problem parameters.
In the future, our results can be applied to other types of inverse problems, e.g., to half-inverse problems with polynomial dependence in discontinuity conditions inside the interval (see \cite{Chit, BCNW18, BCW21, DGW23}).
Moreover, as our approach is constructive and the stability is proved, then numerical methods for solving the considered inverse problems can be developed and used in physical applications.

The paper is organized as follows. In Section~\ref{sec:main}, we formulate the main results of this paper. In Section~\ref{sec:prelim}, we provide some preliminaries, which are concerned with inverse Sturm-Liouville problems featuring polynomials in a boundary condition. In Section~\ref{sec:uniq}, the sequence $\{ v_n \}_{n \ge 1}$, which plays a central role in our method, is constructed, the uniqueness theorem for the inverse problem solution is proved, and a reconstruction algorithm is presented.
In Section~\ref{sec:suff}, we prove a theorem on sufficient conditions for the completeness and for the basicity of $\{ v_n \}_{n \ge 1}$.
In Section~\ref{sec:stab}, local solvability and stability of the inverse problem are proved. In Section~\ref{sec:h-l}, we apply our results to the Hochstadt-Lieberman-type problem with polynomial dependence on spectral parameter in the boundary conditions.

Throughout the paper, we use the notations $\rho = \sqrt{\la}$ and $\rho_n = \sqrt{\la_n}$, where the branch of the square roots is such that $\arg \rho, \, \arg \rho_n \in \left[ -\tfrac{\pi}{2}, \tfrac{\pi}{2} \right)$, $\tau = \Im \rho$.
The notation $\varkappa_a(\rho)$ is used for various entire functions in the $\rho$-plane of exponential type not greater than $a$ belonging to $L_2(\mathbb R)$, which are called the Paley-Wiener functions and admit the representation
$$
\varkappa_a(\rho) = \int_{-a}^a \xi(x) e^{i \rho x} \, dx, \quad \xi \in L_2(-a, a).
$$
Further, we need the following estimate for the Paley-Wiener functions:
$$
\varkappa_a(\rho) = o(e^{|\tau|a}), \quad |\rho| \to \infty.
$$

\section{Main results} \label{sec:main}

Consider the boundary value problem $L = L(\sigma, p_1, p_2, f_1, f_2)$ of form \eqref{eqv}-\eqref{bc}. Due to the standard regularization approach of \cite{SavShkal03},
we understand equation \eqref{eqv} in the following equivalent sense:
\begin{equation} \notag
-(y^{[1]})' - \sigma(x)y^{[1]} - \sigma^2(x)y = \la y, \quad x \in (0,\pi),
\end{equation}
where $y, y^{[1]} \in AC[0,\pi]$.
Obviously, the polynomials of the boundary conditions \eqref{bc} can be expressed in the form
$$
p_1(\la) = \sum \limits_{n=0}^{N_1} a_n \la^n, \quad  p_2(\la) = \sum \limits_{n=0}^{N_2} b_n \la^n, \quad N_1, N_2 \ge 0.
$$

Denote $p := \max\{2N_1+1, 2N_2\}$. So, $p$ can be odd (if $2N_1 +1 > 2N_2$) or even (if $2N_1 + 1 < 2N_2$). If $p$ is odd, then $N_1 \ge N_2$. In this case, without loss of generality, we can say that $b_{N_1}$, $b_{N_1-1}$, ..., $b_{N_2+1}$ are zero. So, we suppose that $N_1 = N_2$ and $a_{N_1} = 1$. If $p$ is even, then $N_1 < N_2$. In this case, without loss of generality, we can say that $a_{N_2}$, $a_{N_2-1}$, ..., $a_{N_1+1}$ are zero. So, we suppose that $N_1 = N_2 - 1$ and $b_{N_2} = 1$. Next, for such pairs of polynomials we will write $(p_1(\la), p_2(\la)) \in \mathcal R_p$.

This paper is devoted to investigation of the following problem.

\begin{ip}\label{ip:main}
Given $p$, the functions $f_1(\la)$, $f_2(\la)$ and a subspectrum $\{\la_n\}_{n \ge 1}$ of the problem $L$, find $\sigma(x)$ and the polynomials $p_1(\la)$, $p_2(\la)$.
\end{ip}

Let us briefly describe our method. We introduce the Hilbert space
$$
\mathcal{H}_p = L_2(0, \pi) \oplus L_2(0, \pi) \oplus \underbrace{\mathbb {C} \oplus \dots \oplus \mathbb {C}}_{p}
$$
and reduce Inverse Problem~\ref{ip:main} to the system of equations 
\begin{equation} \label{sys}
 (u, v_n)_{\mathcal{H}_p}=w_n, \quad n \ge 1,
\end{equation}
where $u$ is an unknown vector function in $\mathcal{H}_p$, the vector functions $v_n \in \mathcal H_p$ and the right-hand sides $w_n$ are found by using the given data $f_1(\la)$, $f_2(\la)$ and the subspectrum $\{\la_n\}_{n \ge 1}$.
Depending on the parity of $p$, the sequences $\{ v_n \}_{n \ge 1}$ and $\{ w_n \}_{n \ge 1}$ are constructed in different ways (see Section~\ref{sec:uniq} for details). 
Relying on the system \eqref{sys}, we prove the uniqueness theorem, develop a constructive algorithm, and find conditions of local solvability and stability for Inverse Problem~\ref{ip:main}.

Proceed to the formulations of the main results.
Along with the problem $L$, we consider another problem $\tilde L(\tilde\sigma, \tilde p_1, \tilde p_2, \tilde f_1, \tilde f_2)$ of the same form, but with other coefficients. Next, we assume that, if a symbol $\gamma$ denotes some object related to the problem $L$, then the symbol $\tilde\gamma$ with tilde denotes the analogous object related to the problem $\tilde L$. Throughout this paper, we suppose that $f_1(\la) \equiv \tilde f_1(\la)$, $f_2(\la) \equiv \tilde f_2(\la)$, and $p \equiv \tilde p$. In this case, the following uniqueness theorem holds:

\begin{thm}\label{thm:uniq_ns}
Completeness of the system $\{v_n\}_{n \ge 1}$ is necessary and sufficient for the uniqueness of solution for Inverse Problem~\ref{ip:main}.
\end{thm}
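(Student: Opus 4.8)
The plan is to reduce the uniqueness question for Inverse Problem~\ref{ip:main} to the solvability/uniqueness of the linear system \eqref{sys} in the Hilbert space $\mathcal H_p$, and then to recognize the completeness of $\{v_n\}_{n\ge1}$ as exactly the condition that makes the map from the unknown $u$ to the data $\{w_n\}$ injective. First I would make precise the correspondence, built in Section~\ref{sec:uniq}, between the triple $(\sigma,p_1,p_2)$ and the vector $u\in\mathcal H_p$: the two $L_2(0,\pi)$ components encode $\sigma$ (together with some auxiliary function coming from the transformation operator / the Weyl-function data), and the $p$ scalar components encode the coefficients of $p_1$ and $p_2$ (normalized as in the definition of $\mathcal R_p$). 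The key point is that this correspondence is a bijection between admissible parameter triples and vectors $u$ in (an affine subset of) $\mathcal H_p$, so that two problems $L$ and $\tilde L$ share the same parameters if and only if the associated vectors coincide, $u=\tilde u$.

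Next I would use the construction of $v_n$ and $w_n$ from the given data: by hypothesis $f_1\equiv\tilde f_1$, $f_2\equiv\tilde f_2$, $p\equiv\tilde p$, and the subspectra coincide, $\la_n=\tilde\la_n$; since $v_n$ and $w_n$ depend only on $p$, on $f_1,f_2$ and on $\{\la_n\}$, we have $v_n=\tilde v_n$ and $w_n=\tilde w_n$ for all $n$. Both $u$ and $\tilde u$ satisfy the same system \eqref{sys}, hence
\begin{equation}\notag
(u-\tilde u,\,v_n)_{\mathcal H_p}=w_n-w_n=0,\qquad n\ge1.
\end{equation}
Therefore $u=\tilde u$ (equivalently, the inverse problem solution is unique for this subspectrum) if and only if the only vector orthogonal to all $v_n$ is zero, which is precisely completeness of $\{v_n\}_{n\ge1}$ in $\mathcal H_p$. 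This gives sufficiency immediately. For necessity, I would argue contrapositively: if $\{v_n\}_{n\ge1}$ is not complete, pick $0\ne h\in\mathcal H_p$ with $(h,v_n)=0$ for all $n$, and show that for a suitable small scalar $\varepsilon$ the perturbed vector $u+\varepsilon h$ still lies in the admissible affine set and still solves \eqref{sys}; translating $u+\varepsilon h$ back through the bijection yields a genuinely different parameter triple $(\hat\sigma,\hat p_1,\hat p_2)\ne(\sigma,p_1,p_2)$ with the same subspectrum $\{\la_n\}$, contradicting uniqueness.

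The main obstacle is the necessity direction, specifically verifying that a nonzero orthogonality vector $h$ can actually be realized as the difference of two \emph{admissible} data vectors: one must check that the affine constraints defining the image of the parameter-to-$u$ map (the normalization $a_{N_1}=1$ or $b_{N_2}=1$, relative primeness of $p_1,p_2$, and the structural constraints linking the two $L_2$ components to a genuine $W_2^{-1}$ potential) are open enough that a small perturbation $u+\varepsilon h$ stays inside. This is where the explicit description of the sequence $\{v_n\}$ and of the solution manifold from Section~\ref{sec:uniq} is essential, and where the polynomial-in-$\la$ boundary condition makes the bookkeeping heavier than in the Dirichlet case of \cite{BondStabAn}; I would handle it by first establishing the bijection onto a concretely described set and only then running the perturbation argument.
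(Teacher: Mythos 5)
Your overall strategy coincides with the paper's: encode the unknowns in the vector $u$ of generalized Cauchy data, observe that $v_n=\tilde v_n$ and $w_n=\tilde w_n$ so that $(u-\tilde u,v_n)_{\mathcal H_p}=0$, deduce sufficiency from completeness, and prove necessity by perturbing $u$ with a small vector orthogonal to all $v_n$. Two points, however, are left at the level of intentions where the paper supplies concrete tools, and the second one is a genuine gap in your argument as written.

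First, in the sufficiency direction you treat the passage from $u=\tilde u$ back to $(\sigma,p_1,p_2)=(\tilde\sigma,\tilde p_1,\tilde p_2)$ as part of an asserted ``bijection.'' The injectivity you need here is not free: it is exactly the uniqueness theorem for the inverse problem by the Weyl function (Proposition~\ref{prop:weyl_uniq}), reached via $u=\tilde u\Rightarrow\Delta_j\equiv\tilde\Delta_j\Rightarrow M\equiv\tilde M$. You should cite it rather than fold it into an unproved bijection claim.

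Second, and more importantly, the ``main obstacle'' you identify in the necessity direction --- showing that $u+\varepsilon h$ is realized by an actual admissible triple $(\hat\sigma,\hat p_1,\hat p_2)$ --- is precisely the step your proposal does not carry out, and the route you sketch (a global, concretely described affine solution set for the parameter-to-$u$ map, whose constraints are ``open enough'') is not how it is done and would be substantially harder; no such global description is available. The paper instead invokes the local solvability theorem for the inverse problem by generalized Cauchy data (Proposition~\ref{cauchy_thm}): any Cauchy data within $\varepsilon$ of admissible data is itself admissible, with a realizing $\sigma\in L_2(0,\pi)$ and $(p_1,p_2)\in\mathcal R_p$. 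Since the orthogonality conditions are homogeneous in $h$, one simply rescales $h$ so that $\|h\|_{\mathcal H_p}\le\varepsilon$ and applies that proposition; the normalization and relative-primeness issues you worry about are absorbed into its conclusion. Without this (externally established) local solvability result, your necessity argument does not close.
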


For the case when $\{ v_n \}_{n \ge 1}$ is an unconditional basis in $\mathcal H_p$, we develop Algorithm~\ref{alg:1} for reconstruction of $q(x)$, $p_1(\la)$, and $p_2(\la)$. Specifically, by solving the system \eqref{sys}, we reduce Inverse Problem~\ref{ip:main} to the previously studied inverse Sturm-Liouville problem with polynomials in one of the boundary conditions.

Since the construction of the system $\{ v_n\}_{n \ge 1}$ is complicated, then checking its completeness and basicity is non-trivial. Therefore, our next result is concerned with some simple sufficient conditions for these properties. 

Introduce the classes $\mathcal S$ and $\mathcal A$ for subspectra $\{ \la_n \}_{n \ge 1}$. Namely, we will write that:
\begin{itemize} 
\item $\{ \la_n \}_{n \ge 1} \in \mathcal S$ if $\la_n \ne \la_k$ for any $n \ne k$, that is, the subspectrum $\{ \la_n \}_{n \ge 1}$ is simple;
\item $\{ \la_n \}_{n \ge 1} \in \mathcal A$ if $\la_n = \rho_n^2 \neq 0$ and $\Im \rho_n = O(1)$ for all $n \ge 1$, and also $\left\{ \rho_n^{-1} \right\}_{n  \ge 1} \in l_2$.
\end{itemize}

\begin{thm}\label{thm:basis}
Suppose that $\{\la_n\}_{n \ge 1} \in \mathcal S$ and $f_1(\la_n) \neq 0$ or $f_2(\la_n) \neq 0$ for any $n \ge 1$. Then, the following assertions are valid:
\begin{enumerate}
\item If the system $\{\sin\sqrt{\la_n} t\}_{n > p}$ is complete in $L_2(0, 2\pi)$, then the system $\{ v_n \}_{n \ge 1}$ is complete in $\mathcal{H}_p$. Thus, the solution of Inverse Problem~\ref{ip:main} is unique.

\item If the system $\{\sin\sqrt{\la_n} t\}_{n > p}$ is a Riesz basis in $L_2(0, 2\pi)$ and $\{ \la_n \}_{n \ge 1} \in \mathcal A$, then the system $\{ v_n \}_{n \ge 1}$ is an unconditional basis in $\mathcal{H}_p$. Thus, the solution of Inverse Problem~\ref{ip:main} can be found by Algorithm~\ref{alg:1}.
\end{enumerate}
\end{thm}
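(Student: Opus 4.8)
The plan is to compare the sequence $\{v_n\}_{n\ge1}$ with an explicit \emph{model} sequence $\{v_n^0\}_{n\ge1}$ in $\mathcal H_p$ whose completeness and basis properties reduce directly to those of $\{\sin\sqrt{\lambda}_n\,t\}_{n>p}$ in $L_2(0,2\pi)$, and then to transfer the relevant property from $\{v_n^0\}$ to $\{v_n\}$ by a Bari-type perturbation argument. Recall that (by the construction in Section~\ref{sec:uniq}) the vector $v_n$ is obtained by normalising the characteristic-function relation $\Delta(\lambda_n)=0$, where $\Delta(\lambda)=f_1(\lambda)\varphi^{[1]}(\pi,\lambda)+f_2(\lambda)\varphi(\pi,\lambda)$ and $\varphi(x,\lambda)$ is the solution of \eqref{eqv} satisfying the first boundary condition. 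Substituting the transformation-operator representations of $\varphi(\pi,\lambda)$ and $\varphi^{[1]}(\pi,\lambda)$ — which for $q\in W_2^{-1}(0,\pi)$ involve $\cos\sqrt{\lambda}\,\pi$, $\sin\sqrt{\lambda}\,\pi$ and integral terms with $L_2$ kernels — one separates in $v_n$ a leading part $v_n^0$, built from $\sin\sqrt{\lambda_n}\,t$, $\cos\sqrt{\lambda_n}\,t$ (extended to $(0,2\pi)$) and the $p$ scalar directions carrying the polynomial coefficients, from a remainder $\widehat v_n:=v_n-v_n^0$ collecting the integral terms. The hypothesis that $f_1(\lambda_n)\ne0$ or $f_2(\lambda_n)\ne0$ for every $n$ is precisely what legitimises this normalisation: it allows one to divide the $n$-th relation by the non-vanishing leading factor (of the form $\rho_n^{\,p}f_i(\lambda_n)$) and keeps $\|v_n^0\|_{\mathcal H_p}$ bounded away from $0$.

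For assertion 1, by Theorem~\ref{thm:uniq_ns} it suffices to show that $\{v_n\}_{n\ge1}$ is complete. Let $a=(g_1,g_2,c_1,\dots,c_p)\in\mathcal H_p$ satisfy $(a,v_n)_{\mathcal H_p}=0$ for all $n\ge1$. Expanding the inner products, one sees that $(a,v_n)_{\mathcal H_p}$ is the value at $\lambda=\lambda_n$ of a single function of the form $\int_0^{2\pi}h(t)\sin\sqrt{\lambda}\,t\,dt$ plus a contribution of the $p$-dimensional scalar part, with $h\in L_2(0,2\pi)$ depending linearly on $g_1,g_2$. Since $\{\lambda_n\}\in\mathcal S$, the $\lambda_n$ are distinct; using the first $p$ relations (whose scalar block is non-degenerate, the $\mathbb C^p$-projections of $v_1,\dots,v_p$ being linearly independent) to eliminate the scalar unknowns, the remaining relations $(a,v_n)_{\mathcal H_p}=0$, $n>p$, reduce to $\int_0^{2\pi}h(t)\sin\sqrt{\lambda_n}\,t\,dt=0$. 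Completeness of $\{\sin\sqrt{\lambda_n}\,t\}_{n>p}$ in $L_2(0,2\pi)$ then forces $h\equiv0$, hence $g_1=g_2=0$, and finally $c_1=\dots=c_p=0$; thus $a=0$.

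For assertion 2, I would first verify that the model system $\{v_n^0\}_{n\ge1}$ is a Riesz basis of $\mathcal H_p$: the Riesz-basis property of $\{\sin\sqrt{\lambda_n}\,t\}_{n>p}$ in $L_2(0,2\pi)$, together with the spacing and the relation $\{\rho_n^{-1}\}\in l_2$ supplied by $\{\lambda_n\}\in\mathcal A$, gives that the $L_2\oplus L_2$ part of $\{v_n^0\}_{n>p}$ is a Riesz basis of $L_2(0,2\pi)\oplus L_2(0,2\pi)$, while $v_1^0,\dots,v_p^0$, whose $\mathbb C^p$-projections are independent, complete it to a Riesz basis of $\mathcal H_p$. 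Next one proves the key perturbation estimate
$$
\sum_{n\ge1}\bigl\|v_n-v_n^0\bigr\|_{\mathcal H_p}^2<\infty ,
$$
by bounding the integral remainders with the Paley--Wiener estimate $\varkappa_a(\rho)=o(e^{|\tau|a})$ (the exponential being controlled via $\Im\rho_n=O(1)$) and summing using $\{\rho_n^{-1}\}\in l_2$. Given this, the operator $T$ defined by $Tv_n^0=v_n-v_n^0$ extends to a Hilbert--Schmidt, hence compact, operator on $\mathcal H_p$, so $I+T$ is Fredholm of index $0$; by assertion 1 the system $\{v_n\}=(I+T)\{v_n^0\}$ is complete, so $I+T$ is surjective, hence invertible, and therefore $\{v_n\}_{n\ge1}$ is an unconditional basis of $\mathcal H_p$. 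The applicability of Algorithm~\ref{alg:1} under the unconditional-basis assumption has already been arranged in Section~\ref{sec:uniq}, so the last assertion follows.

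I expect the main obstacle to be the careful bookkeeping of the $p$-dimensional scalar block: one must show, with uniform control, both that the first $p$ vectors $v_1,\dots,v_p$ together with the $\mathbb C^p$ coordinates form a non-degenerate finite piece that does not spoil completeness in assertion 1 or the Riesz-basis property in assertion 2, and that the tail $\{v_n\}_{n>p}$ matches $\{\sin\sqrt{\lambda_n}\,t\}_{n>p}$ after the normalisation. Quantitatively, the delicate point is to arrange that normalisation so that the $o(e^{|\tau|a})$ decay, the bound $\Im\rho_n=O(1)$ and $\{\rho_n^{-1}\}\in l_2$ combine into a genuinely convergent series $\sum_n\|v_n-v_n^0\|^2<\infty$; this is where all three structural hypotheses on $\{\lambda_n\}$ and on $f_1,f_2$ are used simultaneously, following the scheme of \cite{BondStabAn} but complicated by the polynomial block.
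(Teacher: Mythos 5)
Your high-level strategy for assertion 2 (trigonometric model system, $l_2$-closeness, Bari-type perturbation) is the same as the paper's, and you correctly anticipate that the scalar block is the delicate bookkeeping point. But there is a genuine gap at the very first step, and it affects both assertions: you never explain how the arbitrary entire functions $f_1(\la)$, $f_2(\la)$ disappear from the analysis. By \eqref{defv1}--\eqref{defv2}, the components of $v_n$ are $f_1(\la_n)\rho_n^{p}\sin\rho_n t$, $f_2(\la_n)\rho_n^{p-1}\cos\rho_n t$, $f_1(\la_n)\rho_n^{2k}$, etc.\ --- they do \emph{not} contain $\varphi(\pi,\la)$ or $\varphi^{[1]}(\pi,\la)$, so no transformation-operator expansion of $\varphi$ produces a trigonometric leading part of $v_n$, and the function $V(\la)=(a,v(\cdot,\la))_{\mathcal H_p}$ is \emph{not} of the Paley--Wiener form $\int_0^{2\pi}h(t)\sin\rho t\,dt$ plus a polynomial: it carries the factors $f_1(\la)$, $f_2(\la)$, which have no growth or integral structure whatsoever. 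Dividing $v_n$ by $\rho_n^p f_i(\la_n)$ does not fix this, because the remaining components then carry the arbitrary ratio $f_2(\la_n)/f_1(\la_n)$. The indispensable step (the paper's Lemma~\ref{lem:compl_v}) is to use the eigenvalue relation $\Delta(\la_n)=f_1(\la_n)\Delta_1(\la_n)+f_2(\la_n)\Delta_0(\la_n)=0$ together with Proposition~\ref{prop:zeros} to get $f_2(\la_n)=-\frac{\Delta_1(\la_n)}{\Delta_0(\la_n)}f_1(\la_n)$, whence $v_n$ is a nonzero scalar multiple of the vector $g_n$ of \eqref{g_def} built from the \emph{characteristic functions} $\Delta_0,\Delta_1$; only these have the trigonometric representations \eqref{Delta1}--\eqref{Delta0} that make your model system and your $l_2$-estimate $\sum_n\|\cdot\|^2<\infty$ (after the normalization $\rho_n^{-(2p-1)}$) meaningful. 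This is exactly where the hypothesis ``$f_1(\la_n)\neq0$ or $f_2(\la_n)\neq0$'' is consumed, and it is absent from your argument.

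A second, smaller gap is in your completeness proof. Even after the switch to $g_n$, orthogonality to all $g_n$ only yields that the single entire function $Z(\la)=(h,g(\cdot,\la))_{\mathcal H_p}$ vanishes on $\{\la_n\}$; completeness of $\{\sin\rho_n t\}_{n>p}$ in $L_2(0,2\pi)$ (applied after dividing out the first $p$ zeros) gives $Z\equiv0$. That identity does \emph{not} instantly decouple into ``$h\equiv0$, hence $H_1=H_2=0$, hence the scalars vanish'': the $L_2$ parts and the $p$ scalars enter $Z$ multiplied by the transcendental functions $\Delta_0(\la)$ and $\Delta_1(\la)$, so your plan of ``eliminating the scalar unknowns from the first $p$ relations'' has no basis. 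The paper's Lemma~\ref{lem:suff} instead evaluates $Z$ at the zeros $\{\theta_n\}$ of $\Delta_1$ to isolate the $H_1$/odd-scalar part, and then needs the asymptotics \eqref{as_prove} and the completeness of $\{\sin\sqrt{\theta_n}\,t\}_{n>N_1+1}$ in $L_2(0,\pi)$ as a separate, non-trivial input. Both of these missing ingredients would have to be supplied before your proof can be considered complete.
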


Out next important result is the theorem on local solvability and stability of Inverse Problem~\ref{ip:main}:

\begin{thm}\label{thm:stab}
Suppose that $f_1(\la)$ and $f_2(\la)$ are entire functions, $(\tilde p_1(\la), \tilde p_2(\la)) \in \mathcal R_p$ are polynomials, $\tilde \sigma(x) \in L_2(0, \pi)$ is a complex-valued function, and $\{\tilde\la_n\}_{n \ge 1} \in \mathcal S$ is a fixed subspectrum of the problem $\tilde L$. Assume that:
\begin{enumerate}
\item $\{\tilde v_n\}_{n \ge 1}$ is an unconditional basis in $\mathcal{H}_p$.
\item $\{ \tilde\la_n \}_{n \ge 1} \in \mathcal A$.
\item The following estimates hold:
\begin{equation} \label{estf}
\arraycolsep=1.4pt\def\arraystretch{1.8}
\begin{array}{c}
|f_1(\rho^2)| \le a_1|\tilde\rho_n|^{\alpha_n}, \quad |f_2(\rho^2)| \le a_1|\tilde\rho_n|^{\alpha_n+1}, \quad |\rho-\tilde\rho_n| \le a_2, \\
|f_1(\tilde\la_n)|^2 + \dfrac{1}{|\tilde\la_n|}|f_2(\tilde\la_n)|^2 \ge a_3|\tilde\la_n|^{\alpha_n},
\end{array}
\end{equation}
where $a_1, a_2, a_3$ are some positive real numbers, $\{\alpha_n\}$ is some real sequence.
\end{enumerate}

Then, there exists $\varepsilon > 0$ such that, for any complex numbers $\{\la_n\}_{n \ge 1}$, satisfying the condition
\begin{gather}\label{est_omega}
\Omega = \Bigg(\sum\limits_{n=1}^{\infty}|\tilde\rho_n - \rho_n|^2\Bigg)^{\frac{1}{2}} \le \varepsilon,
\end{gather}
where $\rho_n^2 = \la_n$, $\tilde \rho_n^2 = \tilde\lambda_n$, there exist a complex-valued function $\sigma(x) \in L_2(0, \pi)$ and polynomials $(p_1(\la), p_2(\la)) \in \mathcal R_p$ such that $\{\la_n\}_{n \ge 1}$ is a subspectrum of the problem $L = L(\sigma, p_1, p_2, f_1, f_2)$. Moreover,
\begin{gather} \label{coef_estimates}
\|\sigma(x) - \tilde\sigma(x)\|_{L_2(0, \pi)} \le C\Omega, \quad |a_j - \tilde a_j| \le C\Omega, \quad |b_j - \tilde b_j| \le C\Omega,
\end{gather}
where $C$ depends only on $\tilde L$ and $\{\tilde\la_n\}_{n \ge 1}$.
\end{thm}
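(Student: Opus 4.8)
The plan is to reduce Inverse Problem~\ref{ip:main} for the perturbed data to the linear system \eqref{sys} and then, via Algorithm~\ref{alg:1}, to transfer the already established local solvability and stability for the Sturm--Liouville problem with polynomials in one boundary condition. Given complex numbers $\{\la_n\}_{n\ge1}$ close to $\{\tilde\la_n\}_{n\ge1}$ in the sense of \eqref{est_omega}, first build the vector functions $v_n\in\mathcal H_p$ and the right-hand sides $w_n$ by the procedure of Section~\ref{sec:uniq}, using $f_1$, $f_2$ and $\{\la_n\}$ in place of $\{\tilde\la_n\}$; this is possible since $f_1(\la_n)\ne0$ or $f_2(\la_n)\ne0$ for large $n$ by the lower bound in \eqref{estf} together with $|\rho_n-\tilde\rho_n|\le a_2$, the finitely many exceptional indices being treated separately as in Section~\ref{sec:uniq}.

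The key technical step is to show that, after normalization, $\{v_n\}$ is $\ell_2$-close to $\{\tilde v_n\}$ and $\{w_n\}$ to $\{\tilde w_n\}$. Using the explicit structure of the blocks of $v_n$ (the functions $\sin\sqrt{\la_n}\,t$ and their antiderivatives in the two $L_2(0,\pi)$ components, and the scalar weights assembled from $f_1(\la_n)$, $f_2(\la_n)$ and the polynomial coefficients in the $\mathbb C^p$ component), the mean value theorem, the condition $\{\tilde\la_n\}\in\mathcal A$ (which gives $\tilde\rho_n=O(n)$ and $\Im\tilde\rho_n=O(1)$), and the estimates \eqref{estf}, one establishes that
$$
\sum_{n\ge1}\Bigl\|\tfrac{v_n}{\|v_n\|_{\mathcal H_p}}-\tfrac{\tilde v_n}{\|\tilde v_n\|_{\mathcal H_p}}\Bigr\|_{\mathcal H_p}^{2}\le C\Omega^{2},\qquad
\sum_{n\ge1}\Bigl|\tfrac{w_n}{\|v_n\|_{\mathcal H_p}}-\tfrac{\tilde w_n}{\|\tilde v_n\|_{\mathcal H_p}}\Bigr|^{2}\le C\Omega^{2}.
$$
Here the upper bounds $|f_1(\rho^2)|\le a_1|\tilde\rho_n|^{\alpha_n}$ and $|f_2(\rho^2)|\le a_1|\tilde\rho_n|^{\alpha_n+1}$ control the growth of $\|v_n\|_{\mathcal H_p}$ and $|w_n|$, while the lower bound $|f_1(\tilde\la_n)|^2+|\tilde\la_n|^{-1}|f_2(\tilde\la_n)|^2\ge a_3|\tilde\la_n|^{\alpha_n}$ bounds $\|\tilde v_n\|_{\mathcal H_p}$ away from zero, so that the normalization is harmless and the ratio $\|v_n\|_{\mathcal H_p}/\|\tilde v_n\|_{\mathcal H_p}$ stays between two positive constants. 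I expect this estimate to be the main obstacle: the vectors $v_n$ are assembled from several blocks of different magnitudes governed by $f_1$, $f_2$ and the polynomials, and the exponents $\alpha_n$ must be threaded through all of them uniformly in $n$.

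Since $\{\tilde v_n\}_{n\ge1}$ is an unconditional basis in $\mathcal H_p$, the normalized system $\{\tilde v_n/\|\tilde v_n\|_{\mathcal H_p}\}$ is a Riesz basis; by the Bari-type stability theorem for Riesz bases, the first estimate above implies that, for $\Omega$ sufficiently small, $\{v_n/\|v_n\|_{\mathcal H_p}\}$ is also a Riesz basis in $\mathcal H_p$, hence $\{v_n\}$ is an unconditional basis. Consequently the system \eqref{sys} has a unique solution $u\in\mathcal H_p$; expanding $u$ and $\tilde u$ over the respective biorthogonal systems and invoking the second $\ell_2$-estimate together with the uniform boundedness of the Riesz bounds yields $\|u-\tilde u\|_{\mathcal H_p}\le C\Omega$.

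Finally, run Algorithm~\ref{alg:1} on $u$: it outputs the Weyl function (Weyl-type data) of the Sturm--Liouville problem with polynomials in one boundary condition, and the bound $\|u-\tilde u\|_{\mathcal H_p}\le C\Omega$ turns into closeness of this Weyl data to the reference one. Applying the local solvability and stability theorem for that problem from \cite{ChitBondStab} (see also \cite{ChitBondStabMult}), we obtain, for $\Omega$ small, a complex-valued $\sigma\in L_2(0,\pi)$ and polynomials $(p_1,p_2)\in\mathcal R_p$ satisfying \eqref{coef_estimates}, which also fixes the required $\varepsilon$. It remains to note that, by the very construction of $\{v_n\}$ and $\{w_n\}$, the relations $(u,v_n)_{\mathcal H_p}=w_n$ are equivalent to the vanishing at $\la_n$ of the characteristic function of $L(\sigma,p_1,p_2,f_1,f_2)$; hence each $\la_n$ is an eigenvalue and $\{\la_n\}_{n\ge1}$ is a subspectrum of $L$, which completes the proof.
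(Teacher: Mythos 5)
Your proposal is correct and follows essentially the same route as the paper: $\ell_2$-closeness of the normalized systems $\{v_n\}$, $\{w_n\}$ to $\{\tilde v_n\}$, $\{\tilde w_n\}$ via the estimates \eqref{estf} and the Schwartz/mean-value argument, Riesz-basis stability to solve \eqref{sys} uniquely with $\|u-\tilde u\|_{\mathcal H_p}\le C\Omega$, and then transfer to the known local solvability result for the problem with polynomials in one boundary condition. The only (cosmetic) difference is that the paper applies the stability result directly to the generalized Cauchy data extracted from $u$ (Proposition~\ref{cauchy_thm}) rather than passing through the Weyl function, which is slightly more direct since the entries of $u$ are exactly those data.
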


The simplicity condition for the subspectrum $\{ \la_n \}$ in Theorems~\ref{thm:basis} and~\ref{thm:stab} can be removed by using the technique of \cite{BondStabAn}. However, the case of multiple eigenvalues is more technically complicated. It requires additional notations and constructions. Thus, for clear presentation of our main ideas, we confine ourselves to the case of simple eigenvalues.

The hypotheses of Theorems~\ref{thm:basis} and~\ref{thm:stab} naturally hold for applications to partial inverse problems. As an example, we consider the Hochstadt-Lieberman-type inverse problem for equation \eqref{eqv} with polynomials in the both boundary conditions in Section~\ref{sec:h-l}. We show that, relying on Theorem~\ref{thm:basis}, one can easily figure out the maximal number of eigenvalues that can be excluded in order to keep the unique solvability of the Hochstadt-Lieberman-type problem. Moreover, Theorem~\ref{thm:stab} readily implies the local solvability and stability of the corresponding inverse problem. In particular, this yields the minimality of the given subspectrum.

\section{Preliminaries} \label{sec:prelim}

In this section, we introduce some notations and formulate the known results, which are used in proof of Theorem~\ref{thm:uniq_ns}. Our preliminaries are based on the previous studies \cite{FrYu, Chit, ChitBond, ChitBondStabMult}.

Let us define $S(x, \la)$ and $C(x, \la)$ as the solutions of equation~\eqref{eqv} under the initial conditions:
\begin{equation} \label{initphi}
S(0, \la) = 0, \quad S^{[1]}(0, \la) = 1, \quad C(0, \la) = 1, \quad C^{[1]}(0, \la) = 0.
\end{equation}
Clearly, the functions $S(x, \la)$, $S^{[1]}(x, \la)$, $C(x, \la)$ and $C^{[1]}(x, \la)$ are entire analytic in $\la$ for each fixed $x \in [0,\pi]$.

The spectrum of the problems $L$ coincide with the zeros of the characteristic function
\begin{gather}\label{charfun}
\Delta(\la) = f_1(\la)\Delta_1(\la) + f_2(\la)\Delta_0(\la),
\end{gather}
where 
$$
\Delta_j(\la) := p_1(\la)C^{[j]}(\pi, \la) - p_2(\la)S^{[j]}(\pi, \la), \quad j = 0, 1, 
$$
are the characteristic functions of the boundary value problems $L_j$ for equation~\eqref{eqv} with the boundary conditions
\begin{equation}
\left\{ \begin{aligned} 
p_1(\la)y^{[1]}(0) + p_2(\la)y(0) &= 0, \\
y^{[j]}(\pi) &= 0.
\end{aligned} \right.
\end{equation}

An important property of $\Delta_0(\la)$ and $\Delta_1(\la)$ is reflected in the following statement, which is analogous to Lemma 1 in \cite{FrYu}:
\begin{prop} \label{prop:zeros}
	$\Delta_0(\la)$ and $\Delta_1(\la)$ have no common zeros.
\end{prop}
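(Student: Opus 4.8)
The plan is to exploit the fact that $S(x,\la)$ and $C(x,\la)$ form a fundamental system for equation~\eqref{eqv}, so that their Wronskian is identically $1$. Concretely, with the quasi-derivative convention, one has the identity
\begin{equation} \notag
C(\pi,\la) S^{[1]}(\pi,\la) - C^{[1]}(\pi,\la) S(\pi,\la) \equiv 1, \quad \la \in \mathbb C,
\end{equation}
which follows from the constancy of the (quasi-)Wronskian of two solutions of the same equation together with the initial conditions~\eqref{initphi}. I would first recall or re-derive this identity, since it is the only structural input from the differential equation that the argument needs.

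Next, suppose for contradiction that $\la_0$ is a common zero of $\Delta_0$ and $\Delta_1$, i.e. $p_1(\la_0)C^{[j]}(\pi,\la_0) - p_2(\la_0)S^{[j]}(\pi,\la_0) = 0$ for $j=0,1$. View this as a homogeneous linear system in the pair $(p_1(\la_0), p_2(\la_0))$: the coefficient matrix is
\begin{equation} \notag
\begin{pmatrix} C(\pi,\la_0) & -S(\pi,\la_0) \\ C^{[1]}(\pi,\la_0) & -S^{[1]}(\pi,\la_0) \end{pmatrix},
\end{equation}
whose determinant is $-\bigl(C(\pi,\la_0)S^{[1]}(\pi,\la_0) - C^{[1]}(\pi,\la_0)S(\pi,\la_0)\bigr) = -1 \neq 0$ by the Wronskian identity. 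Hence the only solution is $p_1(\la_0) = p_2(\la_0) = 0$. But $p_1$ and $p_2$ are relatively prime polynomials, so they cannot vanish simultaneously at any point. This contradiction shows $\Delta_0$ and $\Delta_1$ have no common zeros.

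I do not anticipate a serious obstacle here; the argument is short and essentially linear-algebraic once the Wronskian identity is in hand. The only point requiring a little care is making sure the Wronskian relation is stated correctly for the \emph{quasi}-derivative $y^{[1]} = y' - \sigma y$ in the distributional setting of \cite{SavShkal03}: one should check that $\tfrac{d}{dx}\bigl(C S^{[1]} - C^{[1]} S\bigr) = 0$ in the regularized sense, which is immediate from the first-order system form of~\eqref{eqv} (the system matrix is traceless). Everything else — reading off the $2\times 2$ system and invoking coprimality of $p_1,p_2$ — is routine.
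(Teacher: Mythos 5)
Your proof is correct and is essentially the standard argument the paper relies on: the paper itself gives no proof of Proposition~\ref{prop:zeros}, merely noting it is analogous to Lemma~1 of \cite{FrYu}, and that lemma is proved exactly by your route — the constancy of the quasi-Wronskian $C(x,\la)S^{[1]}(x,\la) - C^{[1]}(x,\la)S(x,\la) \equiv 1$ forces $p_1(\la_0)=p_2(\la_0)=0$ at any common zero, contradicting the coprimality of $p_1$ and $p_2$. Your remark about verifying the Wronskian identity for the quasi-derivative in the regularized (traceless first-order system) setting of \cite{SavShkal03} is exactly the right point of care for the distributional potential.
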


Define the Weyl function $M(\la) := \dfrac{\Delta_0(\la)}{\Delta_1(\la)}$ of the boundary value problem $L_1$ 
and consider the following auxiliary inverse problem.

\begin{ip} \label{ip:Weyl}
	Given the Weyl function $M(\la)$, find $q(x)$, $p_1(\la)$, and $p_2(\la)$.
\end{ip}

The uniqueness theorem for Inverse Problem~\ref{ip:Weyl} was proved in \cite{ChitBond}.
\begin{prop}[Theorem 2.3 from \cite{ChitBond}] \label{prop:weyl_uniq}
	If $M(\la) \equiv \tilde M(\la)$, then $\sigma(x) = \tilde\sigma(x)$ a.e. on $x \in (0, \pi)$ and $p_j(\la) = \tilde p_j(\la)$, $j = 1, 2$.
\end{prop}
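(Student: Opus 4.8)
The plan is to adapt the method of spectral mappings (in the form developed for Sturm--Liouville operators with distribution potential) to the presence of the polynomials $p_1,p_2$ in the left boundary condition, and then to pin down these polynomials by a short algebraic argument. The first step is to introduce the relevant Weyl-type objects. Put
\[
\psi(x,\la) := p_1(\la) C(x,\la) - p_2(\la) S(x,\la),
\]
which is the solution of \eqref{eqv} satisfying the first condition in \eqref{bc}; from \eqref{initphi} it obeys $\psi^{[j]}(\pi,\la) = \Delta_j(\la)$ for $j = 0,1$ and the constant-in-$x$ Wronskian identities $S\psi^{[1]} - S^{[1]}\psi \equiv -p_1(\la)$ and $C\psi^{[1]} - C^{[1]}\psi \equiv -p_2(\la)$. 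Set $\Phi(x,\la) := \psi(x,\la)/\Delta_1(\la)$, so that $\Phi^{[1]}(\pi,\la) \equiv 1$, $\Phi(\pi,\la) \equiv M(\la)$, and $\Phi(x,\cdot)$ is meromorphic with poles only at the zeros of $\Delta_1$; away from these and from the zeros of $p_1$, the solutions $S(\cdot,\la)$ and $\Phi(\cdot,\la)$ are linearly independent.

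Next I would introduce the transition functions $P_{jk}(x,\la)$ by
\[
\begin{pmatrix} \tilde S & \tilde\Phi \\ \tilde S^{[1]} & \tilde\Phi^{[1]} \end{pmatrix}(x,\la)
= \begin{pmatrix} P_{11} & P_{12} \\ P_{21} & P_{22} \end{pmatrix}(x,\la)
\begin{pmatrix} S & \Phi \\ S^{[1]} & \Phi^{[1]} \end{pmatrix}(x,\la),
\]
express them by Cramer's rule through $S,\psi,\tilde S,\tilde\psi$ and the Wronskian identities above, and then use the hypothesis $M\equiv\tilde M$ — equivalently $\Delta_0\tilde\Delta_1\equiv\tilde\Delta_0\Delta_1$ — to show that all apparent poles at the zeros of $\Delta_1$ and $\tilde\Delta_1$ cancel, while the apparent poles at the zeros of $p_1$ are removed using that $p_1,p_2$ are relatively prime together with Proposition~\ref{prop:zeros}. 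Hence $P_{jk}(x,\cdot)$ is entire for each fixed $x\in[0,\pi]$. Invoking the standard asymptotics of $S,\psi,C,\Delta_0,\Delta_1$ and their quasi-derivatives — which in the $W_2^{-1}$ setting follow from the regularized transformation-operator representations of \cite{SavShkal03} together with the polynomial growth of $p_1,p_2$ — one obtains, uniformly in $x$ and for $\la$ outside small discs around the zeros of $\Delta_1$, that $P_{11},P_{22}\to1$ and $P_{12},P_{21}\to0$ as $|\la|\to\infty$. Liouville's theorem then forces $P_{11}\equiv P_{22}\equiv1$, $P_{12}\equiv P_{21}\equiv0$, so $\tilde S(x,\la)\equiv S(x,\la)$, $\tilde S^{[1]}(x,\la)\equiv S^{[1]}(x,\la)$ for all $x,\la$ (and likewise for $C$ and $\Phi$); comparing the coefficients of the equations satisfied by $S$ and $\tilde S$ (equivalently, the transformation-operator kernels) yields $\tilde\sigma(x)=\sigma(x)$ a.e.\ on $(0,\pi)$.

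It remains to recover the polynomials. With $\sigma=\tilde\sigma$ we have $S\equiv\tilde S$ and $C\equiv\tilde C$, so expanding $\Delta_0\tilde\Delta_1\equiv\tilde\Delta_0\Delta_1$ and using the Wronskian relation $C(\pi,\la)S^{[1]}(\pi,\la)-C^{[1]}(\pi,\la)S(\pi,\la)\equiv1$ reduces the identity to $p_1(\la)\tilde p_2(\la)\equiv\tilde p_1(\la)p_2(\la)$. Relative primeness of each pair then gives $(p_1,p_2)$ equal to $(\tilde p_1,\tilde p_2)$ up to a multiplicative constant, and the $\mathcal R_p$-normalization (leading coefficient $1$ of $p_1$ when $p$ is odd, of $p_2$ when $p$ is even) fixes this constant as $1$, so $p_j(\la)=\tilde p_j(\la)$ for $j=1,2$.

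I expect the main obstacle to lie in the step producing the entire transition functions and their asymptotics. On the one hand, the construction must be arranged so that the poles coming from the zeros of $p_1$ — which have no analogue in the classical constant-coefficient theory — are genuinely removable; on the other hand, the powers of $\la$ generated by $\deg p_1$ and $\deg p_2$ must be bookkept carefully enough that the growth estimates for $S,\psi,\Delta_j$ remain strong enough to run the Liouville argument in the distribution-potential framework.
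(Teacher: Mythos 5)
The paper itself offers no proof of Proposition~\ref{prop:weyl_uniq}: it is imported as Theorem~2.3 of \cite{ChitBond}, so the only ``in-paper'' argument is the citation. Your overall strategy --- the method of spectral mappings (entire transition matrix, asymptotics, Liouville) followed by reduction of $\Delta_0\tilde\Delta_1\equiv\tilde\Delta_0\Delta_1$ to $p_1\tilde p_2\equiv\tilde p_1 p_2$ and the $\mathcal R_p$-normalization --- is exactly the route used in that source and in \cite{FrYu}, and your final polynomial-recovery step is correct as written (the Wronskian $C(\pi,\la)S^{[1]}(\pi,\la)-C^{[1]}(\pi,\la)S(\pi,\la)\equiv 1$ does collapse the identity as you claim).

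There is, however, a concrete gap in the analytic part. Your fundamental system $(S,\Phi)$ has Wronskian $\langle S,\Phi\rangle=-p_1(\la)/\Delta_1(\la)$, which vanishes at every zero $\la_0$ of $p_1$. There $\psi(x,\la_0)=-p_2(\la_0)S(x,\la_0)$, so the columns of $\bigl(\begin{smallmatrix} S & \Phi\\ S^{[1]} & \Phi^{[1]}\end{smallmatrix}\bigr)$ are proportional and the matrix has rank one, while the matrix built from $\tilde S,\tilde\Phi$ has rank two unless $\tilde p_1(\la_0)=0$ --- which is not known a priori (it is part of what must be proved). Concretely, $P_{11}=-p_1^{-1}\bigl(\tilde S\psi^{[1]}-(\Delta_1/\tilde\Delta_1)\tilde\psi S^{[1]}\bigr)$, and at $\la_0$ the bracket equals $-p_2(\la_0)\,S^{[1]}(x,\la_0)\bigl(\tilde S(x,\la_0)-S(\pi,\la_0)\tilde\psi(x,\la_0)/\tilde\Delta_0(\la_0)\bigr)$, which does not vanish identically in $x$: the hypothesis $M\equiv\tilde M$ supplies only the single scalar relation $S(\pi,\la_0)\tilde\Delta_1(\la_0)=S^{[1]}(\pi,\la_0)\tilde\Delta_0(\la_0)$, and neither relative primeness of $p_1,p_2$ nor Proposition~\ref{prop:zeros} removes the pole. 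So the Liouville argument breaks whenever $\deg p_1\ge 1$. The standard repair is to take as fundamental system $(\psi,\Phi)$ with $\Phi$ normalized by $\langle\psi,\Phi\rangle\equiv 1$ (equivalently $p_1(\la)\Phi^{[1]}(0,\la)+p_2(\la)\Phi(0,\la)=1$ and $\Phi^{[1]}(\pi,\la)=0$): the determinant is then identically $1$, the only singularities of the $P_{jk}$ sit at the zeros of $\Delta_1$ and $\tilde\Delta_1$ and cancel via $M\equiv\tilde M$ exactly as in the classical case, and Liouville yields $\psi\equiv\tilde\psi$ and $\Phi\equiv\tilde\Phi$, from which $p_j=\tilde p_j$ (evaluate $\psi$ and $\psi^{[1]}$ at $x=0$) and $\sigma=\tilde\sigma$ both follow; your closing algebraic step then becomes a consistency check rather than a separate argument.
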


A constructive procedure for solving Inverse Problem~\ref{ip:Weyl} is presented in \cite[Algorithm~6.1]{ChitBond}. That procedure is based on the reduction of the inverse spectral problem to a linear equation in the space of bounded infinite sequences. Using the solution of that equation, one can recover $\sigma(x)$, $p_1(\la)$, and $p_2(\la)$ by using the reconstruction formulas from \cite{ChitBond}.

Using the technique from \cite{Chit}, we obtain the following representations for $\Delta_j(\la)$, $j=0, 1$:

\begin{lem}\label{lem:reprsdeltas}
The following representations hold:
\begin{gather} 
\label{Delta1}
\Delta_1(\la) = \left\{ \begin{aligned} 
-\la^{N_1+1}\dfrac{\sin\rho\pi}{\rho} + \la^{N_1+1}\int_0^{\pi}  \mathcal{J}(t) \dfrac{\sin\rho t}{\rho} \, dt + \sum\limits_{n=0}^{N_1} A_{2n+1}\la^n, \quad N_1 \ge N_2, \\
-\la^{N_2}\cos\rho\pi + \la^{N_2}\int_0^{\pi}  \mathcal{J}(t) \cos\rho t \, dt + \sum\limits_{n=0}^{N_2-1} A_{2n+1}\la^n, \quad N_1 < N_2,
\end{aligned} \right. \\ \label{Delta0}
\Delta_0(\la) = \left\{ \begin{aligned} 
\la^{N_1}\cos\rho\pi + \la^{N_1}\int_0^{\pi}  \mathcal{G}(t) \cos\rho t \, dt + \sum\limits_{n=0}^{N_1-1} A_{2n+2}\la^n, \quad N_1 \ge N_2, \\
-\la^{N_2}\dfrac{\sin\rho\pi}{\rho} + \la^{N_2}\int_0^{\pi}  \mathcal{G}(t) \dfrac{\sin\rho t}{\rho} \, dt + \sum\limits_{n=0}^{N_2-1} A_{2n+2}\la^n, \quad N_1 < N_2,
\end{aligned} \right.
\end{gather}
where $\mathcal{J}(t), \mathcal{G}(t) \in L_2(0, \pi)$ and $A_n \in \mathbb {C}$.
\end{lem}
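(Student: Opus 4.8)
The plan is to start from the identities
\[
\Delta_j(\la) = p_1(\la)\,C^{[j]}(\pi,\la) - p_2(\la)\,S^{[j]}(\pi,\la), \qquad j = 0, 1,
\]
and to insert the integral representations (of transformation-operator type) of the solutions and their quasi-derivatives. For $\sigma \in L_2(0,\pi)$ the regularization theory of \cite{SavShkal03} (cf.\ the derivation in \cite{Chit}) provides, at $x = \pi$, representations of the shape ``leading elementary oscillating function $+$ oscillating integrals with $L_2$ kernels'': the leading terms are $\cos\rho\pi$ for $C(\pi,\la)$ and $S^{[1]}(\pi,\la)$, $\tfrac{\sin\rho\pi}{\rho}$ for $S(\pi,\la)$, and $-\rho\sin\rho\pi$ for $C^{[1]}(\pi,\la)$, while every remainder is a finite combination of integrals $\int_0^\pi\kappa(t)\cos\rho t\,dt$ and $\int_0^\pi\kappa(t)\tfrac{\sin\rho t}{\rho}\,dt$ with $\kappa\in L_2(0,\pi)$, possibly together with additional lower-order elementary terms $\cos\rho\pi$, $\tfrac{\sin\rho\pi}{\rho}$. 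Expanding $p_1(\la)=\sum_n a_n\la^n$, $p_2(\la)=\sum_n b_n\la^n$ then writes $\Delta_j(\la)$ as a finite sum of terms $c\,\la^n\varepsilon(\rho)$ and $\la^n\int_0^\pi\kappa(t)\,\varepsilon(\rho t)\,dt$ ($\kappa\in L_2(0,\pi)$) with bounded powers of $\la$, where $\varepsilon$ stands for one of $\cos$, $\tfrac{\sin(\cdot)}{\rho}$ (and $-\rho\sin$ for the leading part of $C^{[1]}$).

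The term of maximal order is singled out by the normalization defining $\mathcal R_p$: for $N_1\ge N_2$ it is $a_{N_1}\la^{N_1}(-\rho\sin\rho\pi)=-\la^{N_1+1}\tfrac{\sin\rho\pi}{\rho}$ in $\Delta_1$ and $a_{N_1}\la^{N_1}\cos\rho\pi=\la^{N_1}\cos\rho\pi$ in $\Delta_0$, while for $N_1<N_2$ the coefficient $b_{N_2}=1$ plays this role through $S^{[j]}$. Every remaining summand has strictly lower order, and I would show that all of them collapse into a single $L_2$-kernel integral of the target type ($\tfrac{\sin(\cdot)}{\rho}$ for $\Delta_1$, $\cos$ for $\Delta_0$ when $N_1\ge N_2$, and conversely when $N_1<N_2$) carrying the top power of $\la$, plus a polynomial. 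The tool is repeated integration by parts based on $-\partial_t^2\cos\rho t=\la\cos\rho t$ and $-\partial_t^2\tfrac{\sin\rho t}{\rho}=\la\tfrac{\sin\rho t}{\rho}$: writing a kernel $\kappa$ as $H''$ for its twice-iterated antiderivative $H$ and integrating by parts twice turns $\la^n\int_0^\pi\kappa\,\varepsilon(\rho t)\,dt$ into $\la^{n+1}\int_0^\pi H\,\varepsilon(\rho t)\,dt$ modulo boundary terms. Choosing at each step the antiderivative that vanishes at both endpoints (for the $\cos$ case) or at the left endpoint with vanishing derivative at the right one (for the $\tfrac{\sin(\cdot)}{\rho}$ case) keeps those boundary terms free of the ``special'' functions $\tfrac{\sin\rho\pi}{\rho}$, $\cos\rho\pi$ at the top power, so nothing leaks into the leading coefficient; the boundary terms that do appear are $\la^m\cos\rho\pi$ and $\la^m\tfrac{\sin\rho\pi}{\rho}$ with $m$ below the target power, and these are removed by the elementary identities $\cos\rho\pi=1-\la\int_0^\pi\tfrac{\sin\rho s}{\rho}\,ds$, $\tfrac{\sin\rho\pi}{\rho}=\pi-\la\int_0^\pi(\pi-s)\tfrac{\sin\rho s}{\rho}\,ds$ together with the Fubini-type identity $\tfrac{\sin\rho t}{\rho}=\int_0^t\cos\rho s\,ds$ (used inside an integral to pass between $\cos$- and $\tfrac{\sin(\cdot)}{\rho}$-integrals with a one-order-smoother kernel) and one further integration by parts once the kernels are smooth enough. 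Iterating until each integral carries the top power, summing the accumulated $L_2$ kernels into a single $\mathcal J\in L_2(0,\pi)$ (resp.\ $\mathcal G$), and collecting the polynomial leftovers into $\sum_n A_{2n+1}\la^n$ (resp.\ $\sum_n A_{2n+2}\la^n$) yields the claimed formulas; the growth of the integral term follows from the Paley-Wiener estimate recalled in Section~\ref{sec:intro}. The remaining three cases ($j=0$, and $N_1<N_2$) are treated identically, with $\cos$ and $\tfrac{\sin(\cdot)}{\rho}$ interchanged.

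I expect the only real obstacle to be organizational: one must keep simultaneous control of the power of $\la$ in front of each integral, of the Sobolev regularity of each kernel (which must grow enough to license the integrations by parts that convert $\cos$-integrals into $\tfrac{\sin(\cdot)}{\rho}$-integrals, or conversely), and of the precise boundary contributions — in particular verifying that the coefficient of $\la^{N_1+1}\tfrac{\sin\rho\pi}{\rho}$ in $\Delta_1$ (and of $\la^{N_1}\cos\rho\pi$ in $\Delta_0$) comes from the single leading term alone. Once the reduction is arranged so that each elementary step simultaneously raises the $\la$-power and avoids spilling a ``special'' function, the rest is routine; this is exactly the technique of \cite{Chit}, adapted here to the $W_2^{-1}$-class kernels supplied by the regularization approach of \cite{SavShkal03}.
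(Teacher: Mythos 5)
Your plan reconstructs exactly the technique the paper relies on: the paper gives no explicit proof of Lemma~\ref{lem:reprsdeltas}, merely citing the method of \cite{Chit}, and that method is precisely what you describe — transformation-operator representations of $S^{[j]}(\pi,\la)$ and $C^{[j]}(\pi,\la)$ with $L_2$ kernels, expansion of $p_1$, $p_2$, and repeated integration by parts to push every integral term to the top power of $\la$ while the boundary contributions are converted (via identities such as $\cos\rho\pi = 1-\la\int_0^\pi\tfrac{\sin\rho s}{\rho}\,ds$) into the polynomial part. Your identification of the leading terms and of the conversion identities is correct, so the argument is sound modulo the bookkeeping you explicitly acknowledge.
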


We will call the set $\{\mathcal{J}(t), \mathcal{G}(t), A_1, \dots, A_p\}$ the {\it generalized Cauchy data}. This notion has been introduced in \cite{ChitBondStabMult} for the case of polynomials in the boundary conditions. The analogous Cauchy data for inverse problems with constant coefficients were used in \cite{BondStabAn, BondPart, RS92} and other studies.

Note that the Weyl function $M(\la)$ and the generalized Cauchy data mutually uniquely determine each other. Therefore, Inverse Problem~\ref{ip:Weyl} is equivalent to the following problem:

\begin{ip} \label{ip:Cauchy}
Given the generalized Cauchy data $\{ \mathcal{J}(t), \mathcal{G}(t), A_1, \dots, A_p\}$, find $\sigma(x)$, $p_1(\la)$, and $p_2(\la)$.
\end{ip}

To prove Theorem~\ref{thm:uniq_ns}, we need the following result for Inverse Problem~\ref{ip:Cauchy}:
\begin{prop}[Theorem~2.7 from \cite{ChitBondStabMult}] \label{cauchy_thm}
	Let $\tilde\sigma \in L_2(0, \pi)$, $(\tilde p_1, \tilde p_2) \in \mathcal R_p$ and let $\{\tilde{\mathcal{J}}(t), \tilde{\mathcal{G}}(t), \tilde A_1, \dots, \tilde A_{p}\}$ be the generalized Cauchy data for the problem $\tilde L = L_1(\tilde \sigma, \tilde p_1, \tilde p_2)$. Then, there exists $\varepsilon > 0$ such that, for any $\mathcal{J}, \mathcal{G} \in L_2(0, \pi)$, $A_1, \dots, A_{p} \in \mathbb C$ satisfying the condition
	\begin{align} \label{difCauchy}
	\delta := \max\{&\|\mathcal{J}(t)-\tilde{\mathcal{J}}(t)\|_{L_2(0, \pi)}, \|\mathcal{G}(t)-\tilde{\mathcal{G}}(t)\|_{L_2(0, \pi)}, |A_1 - \tilde A_1|, \dots, |A_{p} - \tilde A_{p}|\} \le \varepsilon,
	\end{align}
	there exist a complex-valued function $\sigma \in L_2(0, \pi)$ and polynomials $(p_1, p_2)\in\mathcal R_p$, which are the solution of Inverse Problem~\ref{ip:Cauchy} by the generalized Cauchy data $\{\mathcal{J}(t), \mathcal{G}(t), A_1, \dots, A_{p}\}$. Moreover,
	\begin{equation} \notag
	\|\sigma - \tilde\sigma\|_{L_2(0, \pi)} \le C\delta, \quad |a_n - \tilde a_n| \le C\delta, \quad |b_n - \tilde b_n| \le C\delta, 
	\end{equation}
	where the constant $C$ depends only on $\tilde L_1$.
\end{prop}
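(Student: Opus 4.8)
The plan is to reduce Inverse Problem~\ref{ip:Cauchy} to a linear equation in a Banach space by the method of spectral mappings, and then to handle small perturbations of the generalized Cauchy data by a Neumann-series argument around the reference problem $\tilde L_1$.

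First I would pass from the Cauchy data to the spectral characteristics of $L_1$. By the representations \eqref{Delta1}--\eqref{Delta0}, the set $\{\mathcal{J}, \mathcal{G}, A_1, \dots, A_p\}$ determines the entire functions $\Delta_0(\la)$, $\Delta_1(\la)$, hence the Weyl function $M(\la) = \Delta_0(\la)/\Delta_1(\la)$, whose poles are exactly the zeros of $\Delta_1$ by Proposition~\ref{prop:zeros}; conversely, these representations are quantitative, so I would record estimates showing that the deviation $\delta$ in \eqref{difCauchy} controls the differences of the eigenvalues (zeros of $\Delta_1$) and of the associated weight numbers (residues of $M$) between $L_1$ and $\tilde L_1$. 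Next I would introduce the Weyl solution $\Phi(x,\la)$ of $L_1$ defined through $M(\la)$, together with its counterpart $\tilde\Phi$, and apply the contour-integration technique of \cite{ChitBond, ChitBondStabMult} over expanding contours enclosing the zeros of $\Delta_1$ and $\tilde\Delta_1$. Collecting residues at the eigenvalues, this yields the main equation $(I + \tilde{H}(x))\,\varphi(x) = \tilde\varphi(x)$ in the Banach space $m$ of bounded sequences, where $\varphi(x)$ is the unknown sequence of normalized solutions and the bounded operator $\tilde{H}(x)$ together with the right-hand side is built from the tilde data and the differences above; crucially, $\tilde{H}(x)$ vanishes when the data of $L_1$ and $\tilde L_1$ coincide.

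I would then estimate $\|\tilde{H}(x)\|_{m \to m}$ and the right-hand side perturbation, showing that both are $O(\delta)$ uniformly in $x \in [0,\pi]$, using the Cauchy-data estimates of the first step and the standard asymptotics of the eigenvalues and weight numbers. Consequently there is $\varepsilon > 0$ such that for $\delta < \varepsilon$ the operator $I + \tilde{H}(x)$ is boundedly invertible by a Neumann series, uniformly in $x$, so the main equation has a unique solution $\varphi(x) \in m$ with $\|\varphi(x) - \tilde\varphi(x)\|_m \le C\delta$. Applying the reconstruction formulas of \cite[Algorithm~6.1]{ChitBond} to $\varphi$ produces candidate objects $\sigma$, $p_1$, $p_2$, and the continuity of those formulas yields the estimates $\|\sigma - \tilde\sigma\|_{L_2(0,\pi)} \le C\delta$ and $|a_n - \tilde a_n|, |b_n - \tilde b_n| \le C\delta$.

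The step requiring the most care — and the main obstacle — is the consistency (solvability) part: one must verify that the reconstructed triple $\sigma$, $p_1$, $p_2$ genuinely produces the prescribed Cauchy data and that $(p_1, p_2) \in \mathcal R_p$, i.e. that the recovered polynomials have the correct degrees and normalized leading coefficients while $\sigma \in L_2(0,\pi)$. For the distribution-potential, polynomial-boundary-condition setting this is delicate because the finite-dimensional ``polynomial part'' $A_1, \dots, A_p$ and the leading asymptotic terms of $\Delta_0$, $\Delta_1$ interact; I would check that the $p$ constants map invertibly onto the $p$ free polynomial coefficients, that the $L_2$-parts $\mathcal{J}, \mathcal{G}$ yield an $L_2$-potential, and finally that substituting $\varphi$ back into the reconstruction formulas reproduces characteristic functions of exactly the form \eqref{Delta1}--\eqref{Delta0} with the given data, which closes the argument.
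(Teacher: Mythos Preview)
The paper does not prove this proposition at all: it is quoted verbatim as Theorem~2.7 from \cite{ChitBondStabMult} and used as a black box in the Preliminaries section, so there is no ``paper's own proof'' to compare against. Your outline is a plausible sketch of how that external result is established --- the method of spectral mappings leading to a main equation $(I+\tilde H(x))\varphi=\tilde\varphi$ in a sequence space, invertibility via a Neumann series for small $\delta$, and reconstruction formulas to recover $(\sigma,p_1,p_2)$ --- and it is indeed in the spirit of \cite{ChitBond, ChitBondStab, ChitBondStabMult}; but within the present paper this statement is simply imported, not proved, so no proof is expected here.
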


\section{Uniqueness and algorithm} \label{sec:uniq}

In this section, first, we construct the sequence of vector functions $\{ v_n \}_{n \ge 1}$, which plays a crucial role in the analysis of Inverse Problem~\ref{ip:main}. Second, Theorem~\ref{thm:uniq} on the uniqueness of solution for Inverse Problem~\ref{ip:main} is proved by considering separately necessity and sufficiency. It is remarkable that, for proving the sufficiency of the uniqueness conditions, we rely on the local solvability of Inverse Problem~\ref{ip:Cauchy} by the generalized Cauchy data. Third, we develop a constructive algorithm for the recovery of $\sigma(x)$, $p_1(\la)$, and $p_2(\la)$ from the given subspectrum $\{ \la_n \}_{n \ge 1}$. Our main idea consists in the reduction of Inverse Problem~\ref{ip:main} with analytical functions in the boundary condition to Inverse Problem~\ref{ip:Weyl} by the Weyl function (or the equivalent Inverse Problem~\ref{ip:Cauchy} by the generalized Cauchy data).

Recall that $p := \max\{2N_1+1, 2N_2\}$ and consider the Hilbert space
$$
\mathcal{H}_p = L_2(0, \pi) \oplus L_2(0, \pi) \oplus \underbrace{\mathbb {C} \oplus \dots \oplus \mathbb {C}}_{p}
$$
of elements $[H_1(t), H_2(t), h_1, h_2, \dots, h_p]$,
where $H_i(t) \in L_2(0, \pi)$, $h_j \in \mathbb {C}$, $i=1, 2$, $j=\overline{1, p}$. The scalar product and the norm in $\mathcal H_p$ are defined as follows:
$$
(g, h)_{\mathcal{H}_p} = \int_0^{\pi}  \big(\overline{G_1(t)}H_1(t) + \overline{G_2(t)}H_2(t) \big) \, dt + \sum\limits_{n=1}^p{\overline{h_n}g_n}, \quad
\|h\|_{\mathcal{H}_p} = \sqrt{(h, h)_{\mathcal{H}_p}},
$$
where $g, h \in \mathcal{H}_p$.

Introduce the vector function
\begin{equation} \label{defu}
    u(t) = [\overline {{\mathcal {J}}(t)}, \overline {{\mathcal {G}}(t)}, \overline A_{1}, \dots , \overline A_{p}].
\end{equation}

According to Lemma~\ref{lem:reprsdeltas}, consider the two cases.

\medskip

\textbf{\textit{\underline{Case 1:}}} $N_1 \ge N_2$, so $p = 2N_1 + 1$ is odd.

Define the vector function
\begin{gather} \label{defv1}
    v(t, \la) = [f_1(\la)\rho^{p}\sin\rho t, f_2(\la)\rho^{p-1}\cos\rho t, f_1(\la), f_2(\la), \dots , f_1(\la)\rho^{p-3}, f_2(\la)\rho^{p-3}, f_1(\la)\rho^{p-1}],
\end{gather}
and find the scalar product in $\mathcal {H}_p$:
\begin{gather*}
    (u(t), v(t, \la))_{\mathcal H_p} = f_1(\la)\rho^{p} \int_0^{\pi} {\mathcal {J}}(t) \sin\rho t \, dt + f_2(\la)\rho^{p-1} \int_0^{\pi} {\mathcal {G}}(t) \cos\rho t \, dt \\
    + f_1(\la)\sum\limits_{n=0}^{N_1}A_{2n+1}\rho^{2n} + f_2(\la)\sum\limits_{n=0}^{N_1-1}A_{2n+2}\rho^{2n}.
\end{gather*}

According to \eqref{charfun}, \eqref{Delta1}, and \eqref{Delta0}, we can conclude that
\begin{gather} \label{uvw1}
    (u(t), v(t, \la))_{\mathcal H_p} = \Delta(\la) + w(\la),
\end{gather}
where 
\begin{gather} \label{defw1}
    w(\la) = f_1(\la)\rho^p\sin\rho\pi - f_2(\la)\rho^{p-1}\cos\rho\pi.
\end{gather}

\textbf{\textit{\underline{Case 2:}}} $N_2 \gt N_1$, so $p = 2 N_2$ is even.

Define the vector function
\begin{gather} \label{defv2}
    v(t, \la) = [f_1(\la)\rho^{p}\cos\rho t, f_2(\la)\rho^{p-1}\sin\rho t, f_1(\la), f_2(\la), \dots , f_1(\la)\rho^{p-2}, f_2(\la)\rho^{p - 2}],
\end{gather}
Using \eqref{charfun}, \eqref{Delta1}, and \eqref{Delta0}, we conclude that
\begin{gather} \label{uvw2}
    (u(t), v(t, \la))_{\mathcal H_p} = \Delta(\la) + w(\la), 
\end{gather}
where 
\begin{equation} \label{defw2}
w(\la) = f_1(\la)\rho^p\cos\rho\pi + f_2(\la)\rho^{p-1}\sin\rho\pi.
\end{equation}

Introduce the notation
$$
   f^{\langle n \rangle}(\la) = \frac{d^n f}{d\la^n}, \quad n \ge 0.
$$

Let $\{\la_n\}_{n\ge1}$ be a subspectrum of problem $L$. Let set $I := \{n \in \mathbb {N}: \la_{n+1} \neq \la_n \}$ be a set of distinct eigenvalues' indexes and $m_k$ be a multiplicity of the eigenvalue $\la_k$. Without loss of generality, the eigenvalues can be numbered so that $\la_{k-1} \neq \la_k = \la_{k+1} = \dots = \la_{k+m_k-1} \neq \la_{k+m_k}$.

Since $\la_k$ is the zero of $\Delta(\la)$ of multiplicity at least $m_k$, we have
$$
\Delta^{\langle n \rangle}(\la_k) = 0, \quad k \in I, \quad n = \overline {0, m_k - 1}.
$$
Consequently, it follows from \eqref{uvw1} and \eqref{uvw2} that
\begin{gather*}
     (u(t), v^{\langle n \rangle}(t, \la_k))_{\mathcal H_p} = w^{\langle n \rangle}(\la_k), \quad k \in I, \quad n = \overline {0, m_k - 1},
\end{gather*}
in the both cases.

Put 
\begin{equation} \label{defvw}
v_{k + n}(t) = v^{\langle n \rangle}(t, \la_k), \quad w_{k + n}(t) = w^{\langle n \rangle}(t, \la_k), \quad k \in I, \quad n = \overline {0, m_k - 1}.
\end{equation}

Thus, we have defined the sequence $\{ v_n \}_{n \ge 1}$ in ${\cal H}_p$ and the sequence of complex numbers $\{ w_n \}_{n \ge 1}$.
Using \eqref{uvw1} and \eqref{uvw2}, we arrive at the relation
\begin{equation} \label{scal}
    (u, v_n) = w_n, \quad n \ge 1,
\end{equation}
which plays a crucial role in investigation of the inverse problem. Here $\{ v_n \}_{n \ge 1}$ and $\{ w_n \}_{n \ge 1}$ are constructed by using the known data of Inverse Problem~\ref{ip:main}, while $u \in {\cal H}_p$ is related to the unknown $\sigma(x)$ and the polynomials $p_1(\la)$, $p_2(\la)$.

Now we are ready to prove Theorem~\ref{thm:uniq_ns}. We will prove it by parts by formulating additional lemmas.

\begin{lem}[Sufficiency] \label{thm:uniq}
Let $\{ \la_n \}_{n \ge 1}$ and $\{ \tilde \la_n \}_{n \ge 1}$ be subspectra of the problems $L$ and $\tilde L$, respectively.
Suppose that the sequence $\{ v_n \}_{n \ge 1}$ constructed for the problem $L$ and its subspectrum $\{ \la_n \}_{n \ge 1}$ by formulas \eqref{defv1}, \eqref{defv2}, and \eqref{defvw} is complete in $\mathcal{H}_p$, 
and let $p = \tilde p$, $f_j(\la) \equiv \tilde f_j(\la)$, $j = 1,2$, $\la_n = \tilde \la_n$, $n \ge 1$. 
Then $\sigma = \tilde\sigma$ in $L_2(0, \pi)$ and $p_j(\la) \equiv \tilde p_j(\la)$, $j = 1,2$.
\end{lem}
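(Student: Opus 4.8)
The plan is to exploit the identity \eqref{scal} for both problems $L$ and $\tilde L$ together with the completeness hypothesis. Since $p = \tilde p$, $f_j \equiv \tilde f_j$ and $\la_n = \tilde\la_n$ for all $n$, the construction of the vector sequence shows that $\tilde v_n = v_n$ for every $n \ge 1$: indeed, by \eqref{defv1} or \eqref{defv2} the vectors $v(t,\la)$ depend only on $f_1$, $f_2$ and $p$, and by \eqref{defvw} the $v_n$ are obtained by $\la$-differentiation of $v(t,\la)$ evaluated at the (common) eigenvalues with the (common) multiplicities. Likewise $\tilde w_n = w_n$, because $w(\la)$ in \eqref{defw1}, \eqref{defw2} also depends only on $f_1, f_2, p$. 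Hence from \eqref{scal} applied to both problems we get $(u - \tilde u, v_n)_{\mathcal H_p} = w_n - \tilde w_n = 0$ for all $n \ge 1$.

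The second step is to invoke completeness of $\{v_n\}_{n\ge 1}$ in $\mathcal H_p$: a vector orthogonal to every element of a complete system must be zero, so $u = \tilde u$ in $\mathcal H_p$. Unwinding the definition \eqref{defu}, this means $\mathcal J(t) = \tilde{\mathcal J}(t)$ and $\mathcal G(t) = \tilde{\mathcal G}(t)$ a.e.\ on $(0,\pi)$, and $A_k = \tilde A_k$ for $k = \overline{1,p}$. In other words, the generalized Cauchy data $\{\mathcal J, \mathcal G, A_1,\dots,A_p\}$ of the auxiliary problem $L_1$ coincide with those of $\tilde L_1$.

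The final step is to transfer this to a conclusion about $\sigma$, $p_1$, $p_2$. The generalized Cauchy data and the Weyl function $M(\la) = \Delta_0(\la)/\Delta_1(\la)$ mutually uniquely determine each other (as recalled after Lemma~\ref{lem:reprsdeltas}), so $M(\la) \equiv \tilde M(\la)$; alternatively one applies the uniqueness part of Proposition~\ref{cauchy_thm} (Theorem 2.7 of \cite{ChitBondStabMult}) directly to Inverse Problem~\ref{ip:Cauchy}. Either way, Proposition~\ref{prop:weyl_uniq} (or the uniqueness statement of Proposition~\ref{cauchy_thm}) yields $\sigma(x) = \tilde\sigma(x)$ a.e.\ on $(0,\pi)$ and $p_j(\la) \equiv \tilde p_j(\la)$, $j=1,2$, which is exactly the assertion.

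I do not expect a serious obstacle here: the lemma is essentially a clean consequence of the linear-algebraic setup of \eqref{scal} plus the previously established uniqueness for Inverse Problem~\ref{ip:Weyl} (equivalently Inverse Problem~\ref{ip:Cauchy}). The only point requiring a little care is the bookkeeping that $v_n = \tilde v_n$ and $w_n = \tilde w_n$ under the stated coincidences — in particular, that the indexing in \eqref{defvw} via the index set $I$ and the multiplicities $m_k$ is the same for $L$ and $\tilde L$, which follows because $\{\la_n\} = \{\tilde\la_n\}$ as sequences (with multiplicity). It is worth noting, as the authors remark, that the sufficiency direction secretly leans on the \emph{solvability} side of Proposition~\ref{cauchy_thm}, not merely its uniqueness: one needs to know that the coinciding Cauchy data actually come from a genuine problem of the required class, which is guaranteed since they are, by hypothesis, the Cauchy data of $\tilde L_1$.
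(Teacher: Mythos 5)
Your proposal is correct and follows essentially the same route as the paper: identify $v_n=\tilde v_n$, $w_n=\tilde w_n$ from the shared data, use completeness to get $u=\tilde u$, hence coinciding generalized Cauchy data, hence $\Delta_j\equiv\tilde\Delta_j$ and $M\equiv\tilde M$, and conclude via Proposition~\ref{prop:weyl_uniq}. (Your closing aside about leaning on the solvability part of Proposition~\ref{cauchy_thm} is unnecessary here --- that ingredient is only needed for the necessity direction, and your own parenthetical already explains why no appeal to it is required in this one.)
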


\begin{proof}
Suppose that two boundary value problems $L$ and $\tilde L$ of form \eqref{eqv}-\eqref{bc} and their subspectra $\{ \la_n \}_{n \ge 1}$ and $\{ \tilde \la_n \}_{n \ge 1}$ fulfill the conditions of the lemma.
By construction, we have $v_n = \tilde v_n$ in the Hilbert space ${\mathcal H}_p$ and $w_n = \tilde w_n$, $n \ge 1$. 
Then, for the problem $\tilde L$, we get $(\tilde u, v_n)_{\mathcal{H}_p} = w_n$, $n \ge 1$.
Therefore $(u - \tilde u, v_n)_{\mathcal{H}_p} = 0$, $n \ge 1$.
Due to the completeness of the sequence $\{ v_n \}_{n \ge 1}$ in ${\mathcal H}_p$, this implies $u = \tilde u$ in ${\mathcal{H}_p}$. 
Hence, the generalized Cauchy data of the problems $L$ and $\tilde L$ coincide. Consequently, from \eqref{Delta1}-\eqref{Delta0}, we get that $\Delta_j(\la) \equiv \tilde {\Delta}_j(\la)$, $j = 0, 1$.
Then, $M(\la) \equiv \tilde M(\la)$ and, due the Proposition~\ref{prop:weyl_uniq}, $\sigma = \tilde\sigma$ in $L_2(0, \pi)$ and $p_j(\la) \equiv \tilde p_j(\la)$, $j= 1, 2$.
\end{proof}

\begin{lem}[Necessity] \label{thm:uniq2}
Let $\{\la_n\}_{n \ge 1}$ be a subspectrum of the problem $L$. Suppose that the sequence $\{ v_n \}_{n \ge 1}$ is not complete in $\mathcal{H}_p$. Then, there exist a complex-valued function $\tilde \sigma(x) \in L_2(0, \pi)$ and polynomials $(\tilde p_1(\la), \tilde p_2(\la)) \in \mathcal R_p$ such that $\{\la_n\}_{n \ge 1}$ is a subspectrum of problem $\tilde L$ and $(\tilde \sigma, \tilde p_1, \tilde p_2) \neq (\sigma, p_1, p_2)$.
\end{lem}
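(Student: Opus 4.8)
The plan is to argue contrapositively by \emph{constructing} a perturbed problem $\tilde L$ that shares the subspectrum $\{\la_n\}_{n \ge 1}$ but has different parameters, exploiting precisely the failure of completeness of $\{v_n\}_{n \ge 1}$. Since $\{v_n\}_{n \ge 1}$ is not complete in $\mathcal H_p$, there exists a nonzero element $h \in \mathcal H_p$ orthogonal to every $v_n$; write $h = [\overline{\mathcal J_\ast(t)}, \overline{\mathcal G_\ast(t)}, \overline{A_1^\ast}, \dots, \overline{A_p^\ast}]$ after conjugating, so that this ``direction'' is expressed in the same coordinates as the vector $u$ in \eqref{defu}. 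The idea is then to set $\tilde u := u + \theta h$ for a small scalar $\theta \ne 0$ and to check that (i) the orthogonality $(\,\tilde u, v_n)_{\mathcal H_p} = (u, v_n)_{\mathcal H_p} = w_n$ is preserved for all $n$, so $\{\la_n\}$ remains a subspectrum of the corresponding problem, and (ii) for $\theta$ small enough, $\tilde u$ is in fact the vector of generalized Cauchy data of a genuine Sturm-Liouville problem $\tilde L$ with $\tilde \sigma \in L_2(0,\pi)$, $(\tilde p_1, \tilde p_2) \in \mathcal R_p$, yet $\tilde u \ne u$, hence $(\tilde\sigma, \tilde p_1, \tilde p_2) \ne (\sigma, p_1, p_2)$ by Proposition~\ref{prop:weyl_uniq} (or the reconstruction formulas of \cite{ChitBond}).

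The key step making (ii) work is the local solvability of Inverse Problem~\ref{ip:Cauchy}, i.e. Proposition~\ref{cauchy_thm}. Concretely, I would take $\tilde\sigma := \sigma$, $\tilde p_j := p_j$, and let $\{\tilde{\mathcal J}, \tilde{\mathcal G}, \tilde A_1, \dots, \tilde A_p\}$ be the generalized Cauchy data of $L_1(\sigma, p_1, p_2)$, which coincide with the entries of $u$. Then for the perturbed data $\{\mathcal J + \theta\mathcal J_\ast, \mathcal G + \theta\mathcal G_\ast, A_1 + \theta A_1^\ast, \dots, A_p + \theta A_p^\ast\}$ the discrepancy $\delta$ in \eqref{difCauchy} is $O(|\theta|)$, so for $|\theta| \le \varepsilon/\|h\|_{\mathcal H_p}$ (with $\varepsilon$ from Proposition~\ref{cauchy_thm}) there exist $\tilde\sigma \in L_2(0,\pi)$ and $(\tilde p_1, \tilde p_2) \in \mathcal R_p$ realizing these as their generalized Cauchy data. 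Equivalently, via Lemma~\ref{lem:reprsdeltas} this fixes $\tilde\Delta_0, \tilde\Delta_1$, hence $\tilde\Delta(\la) = f_1(\la)\tilde\Delta_1(\la) + f_2(\la)\tilde\Delta_0(\la)$ by \eqref{charfun}; and the computation \eqref{uvw1}/\eqref{uvw2} gives $(\tilde u, v(t,\la))_{\mathcal H_p} = \tilde\Delta(\la) + w(\la)$, where $w$ is unchanged since it depends only on $f_1, f_2, p$. Evaluating at $\la = \la_k$ and its derivatives up to order $m_k - 1$, orthogonality of $h$ to $v_{k+n} = v^{\langle n\rangle}(\cdot,\la_k)$ yields $\tilde\Delta^{\langle n\rangle}(\la_k) = (\tilde u - u, v_{k+n})_{\mathcal H_p} = 0$, so every $\la_k$ is a zero of $\tilde\Delta$ of multiplicity at least $m_k$; that is, $\{\la_n\}_{n \ge 1}$ is a subspectrum of $\tilde L = L(\tilde\sigma, \tilde p_1, \tilde p_2, f_1, f_2)$.

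It remains to guarantee $(\tilde\sigma, \tilde p_1, \tilde p_2) \ne (\sigma, p_1, p_2)$, i.e. that the perturbation is genuinely nontrivial. This is where one must be slightly careful: choosing $\theta \ne 0$ makes $\tilde u \ne u$ in $\mathcal H_p$ (since $h \ne 0$), and by the bijective correspondence between generalized Cauchy data and $(\sigma, p_1, p_2)$ — which is exactly the uniqueness content of Proposition~\ref{prop:weyl_uniq} together with the reconstruction formulas of \cite{ChitBond} — distinct Cauchy data force distinct triples $(\tilde\sigma, \tilde p_1, \tilde p_2) \ne (\sigma, p_1, p_2)$. I expect the main obstacle to be purely bookkeeping: making sure the element $h$ witnessing non-completeness is correctly transported into the ``Cauchy data'' coordinates (keeping track of the complex conjugates in the definition \eqref{defu} of $u$ and in the inner product on $\mathcal H_p$), and confirming that $w(\la)$ truly does not move under the perturbation, so that the relation $(u, v_n)_{\mathcal H_p} = w_n$ transfers verbatim to $\tilde u$. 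No deep estimate is needed beyond invoking Proposition~\ref{cauchy_thm}; the argument is essentially a soft perturbation trick riding on the already-established local solvability of Inverse Problem~\ref{ip:Cauchy}.
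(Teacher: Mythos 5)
Your proposal is correct and follows essentially the same route as the paper: take a nonzero annihilator of $\{v_n\}_{n\ge 1}$, scale it small enough to invoke the local solvability of Inverse Problem~\ref{ip:Cauchy} (Proposition~\ref{cauchy_thm}) for the perturbed generalized Cauchy data $\tilde u = u + \theta h$, and verify via \eqref{uvw1}/\eqref{uvw2} that $\tilde\Delta$ vanishes on $\{\la_n\}$ while $\tilde u \ne u$ forces distinct parameters. The paper phrases the smallness step as ``choose $\hat u$ arbitrarily small by the norm'' rather than introducing an explicit scalar $\theta$, but the argument is the same.
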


\begin{proof}
As the system $\{ v_n \}_{n \ge 1}$ is not complete in $\mathcal{H}_p$, then there exists a non-zero element $\hat u \in \mathcal{H}_p$ such that
\begin{gather}\label{uniq_sys1}
(\hat u, v_n)_{\mathcal{H}_p} = 0, \quad n\ge 1.
\end{gather}
As equations \eqref{uniq_sys1} are linear by $\hat u$, then we can choose $\hat u$ arbitrarily small by the norm. So, let $\|\hat u\|_{\mathcal{H}_p} \le \varepsilon$, where  $\varepsilon$ is taken from Proposition~\ref{cauchy_thm}. Suppose that
\begin{gather*}
u = [\overline{\mathcal{J}(t)}, \overline{\mathcal{G}(t)}, \overline{A_1}, \dots, \overline{A_p}], \\
\tilde u := u + \hat u = [\overline{\tilde{\mathcal{J}}(t)}, \overline{\tilde{\mathcal{G}}(t)}, \overline{\tilde{A}_1}, \dots, \overline{\tilde{A}_{p}}],
\end{gather*}
where $\tilde u \neq u$. Due to Proposition~\ref{cauchy_thm}, there exist a function $\tilde \sigma(x)$ and a pair of polynomials $(\tilde{p}_1(\la), \tilde{p}_2(\la)) \in \mathcal{R}_p$ such that $\{\tilde{ \mathcal{J}}(t), \tilde{\mathcal{G}}(t), \tilde {A}_1, \dots, \tilde {A}_{p}\}$ are the generalized Cauchy data for $\tilde L = L(\tilde \sigma, \tilde p_1, \tilde p_2, f_1, f_2)$.

Next, define the functions $\tilde\Delta_0(\la)$ and $\tilde\Delta_1(\la)$ by formulas \eqref{Delta1}--\eqref{Delta0}, depending on the parity of $p$, and put $\tilde\Delta(\la) = f_1(\la)\tilde\Delta_1(\la) + f_2(\la)\tilde\Delta_0(\la)$. It can be shown, that $\tilde\Delta(\la)$ is the characteristic function of problem $\tilde L$. From \eqref{uniq_sys1} and \eqref{scal} we get $(\tilde u, v_n)_{\mathcal{H}_p} = w_n$, $n \ge 1$. So, $\tilde\Delta(\la)$ has zeros $\{\la_n\}_{n \ge 1}$ and, consequently, $\{\la_n\}_{n \ge 1}$ is the subspectrum of the problem $\tilde L$ with $(\tilde \sigma, \tilde p_1, \tilde p_2) \neq (\sigma, p_1, p_2)$.
\end{proof}

Lemmas \ref{thm:uniq} and \ref{thm:uniq2} together yield Theorem~\ref{thm:uniq_ns}.

If the sequence $\{ v_n \}_{n \ge 1}$ is an unconditional basis in ${\mathcal H}_p$, one can solve Inverse Problem~\ref{ip:main} by the following algorithm.

\begin{alg} \label{alg:1}
Suppose that the integer $p$, the entire functions $f_1(\la)$ and $f_2(\la)$, and the subspectrum $\{ \la_n \}_{n \ge 1}$ are given. We have to find $\sigma(x)$, $p_1(\la)$, and $p_2(\la)$.

\begin{enumerate}
\item Construct the functions $v(t, \la)$ and $w(\la)$ by either \eqref{defv1}, \eqref{defw1} or \eqref{defv2}, \eqref{defw2}, depending on the parity of $p$.
\item Construct the sequences $\{ v_n \}_{n \ge 1}$ and $\{ w_n \}_{n \ge 1}$ by \eqref{defvw}.
\item Find the biorthonormal sequence $\{ v_n^* \}_{n \ge 1}$ to $\{ v_n \}_{n \ge 1}$ in ${\mathcal H}_p$, that is,
$$
(v_n, v_k^*) = \begin{cases}
                    1, \quad n= k, \\
                    0, \quad n \ne k.
               \end{cases}
$$
\item Find the element $u \in {\mathcal H}_p$ satisfying \eqref{scal} by the formula
$$
u = \sum_{n = 1}^{\infty} \overline{w_n} v_n^*.
$$
\item Using the entries of $u$ (see \eqref{defu}), find $\Delta_0(\la)$ and $\Delta_1(\la)$ by the formulas of Lemma~\ref{lem:reprsdeltas}, and then find $M(\la) = \dfrac{\Delta_0(\la)}{\Delta_1(\la)}$.
\item Recover $\sigma(x)$ and the polynomials $p_1(\la)$, $p_2(\la)$ from the Weyl function $M(\la)$ using Algorithm 6.1 from \cite{ChitBond}.
\end{enumerate}
\end{alg}

\section{Sufficient conditions for completeness and basicity} \label{sec:suff}

In this section, we prove Theorem~\ref{thm:basis}, which presents simple sufficient conditions on the subspectrum $\{ \la_n \}_{n \ge 1}$ for completeness and basicity of the sequence $\{ v_n \}_{n \ge 1}$. We show that, instead of the sequence $\{ v_n \}_{n \ge 1}$, which is constructed by using arbitrary analytic functions $f_1(\la)$ and $f_2(\la)$, we can equivalently consider the sequence $\{ g_n \}_{n \ge 1}$, which is constructed by the characteristic functions $\Delta_0(\la)$ and $\Delta_1(\la)$ possessing some nice properties. Furthermore, we prove that the completeness and the unconditional basicity of $\{ g_n \}_{n \ge 1}$ follow from the analogous properties of the sequence $\{ \sin \rho_n t\}_{n > p}$ in $L_2(0, 2\pi)$.

Throughout this section, we suppose that $\{ \la_n \}_{n \ge 1} \in \mathcal S$ and $f_1(\la_n) \neq 0$ or $f_2(\la_n) \neq 0$ for every $n \ge 1$, as in Theorem~\ref{thm:basis}. For definiteness, we provide all the proofs for odd $p$. The case of even $p$ can be considered analogously.

Let us introduce the following vector function:
\begin{align}\notag
g(t, \la) = [&\rho^{p}\sin\rho t \Delta_0(\la), -\rho^{p-1}\cos\rho t\Delta_1(\la),\\ \label{g_def}
&\Delta_0(\la), \Delta_1(\la), \dots, \Delta_1(\la)\rho^{p-3},\Delta_0(\la)\rho^{p-1}],
\end{align}
and define $g_n(t) = g(t, \la_n)$, $n \ge 1$.

\begin{lem}\label{lem:compl_v}
	The system $\{v_n\}_{n \ge 1}$ is complete in the Hilbert space $\mathcal{H}_p$ if and only if the system $\{g_n\}_{n \ge 1}$ is complete in $\mathcal{H}_p$.
\end{lem}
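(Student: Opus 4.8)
\emph{The plan.} I would show that, for every $n$, the vector $v_n$ equals a nonzero scalar multiple of $g_n$ composed with one fixed diagonal sign operator. Since completeness of a system in a Hilbert space is preserved both under rescaling its members by nonzero scalars and under unitary maps, this makes $\{v_n\}_{n\ge1}$ complete in $\mathcal{H}_p$ precisely when $\{g_n\}_{n\ge1}$ is. The whole content of the lemma is thus that, although $v_n$ is built from the arbitrary functions $f_1,f_2$ while $g_n$ is built from the structured functions $\Delta_0,\Delta_1$, these two choices coincide up to a harmless change of variables along the subspectrum.

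\emph{Step 1: pointwise proportionality.} Fix $n$. Since $\la_n$ is a zero of $\Delta$, formula \eqref{charfun} gives $f_1(\la_n)\Delta_1(\la_n) + f_2(\la_n)\Delta_0(\la_n) = 0$, i.e.\ the rows $(f_1(\la_n),f_2(\la_n))$ and $(\Delta_0(\la_n),-\Delta_1(\la_n))$ of $\mathbb{C}^2$ have vanishing determinant and hence are linearly dependent. The second row is nonzero by Proposition~\ref{prop:zeros}, and the first by the standing hypothesis $f_1(\la_n)\neq 0$ or $f_2(\la_n)\neq 0$. Therefore there is a constant $c_n\neq 0$ with
$$
f_1(\la_n) = c_n\,\Delta_0(\la_n), \qquad f_2(\la_n) = -c_n\,\Delta_1(\la_n).
$$
Ensuring that $c_n$ is well defined and nonzero even when $\Delta_0(\la_n)$ or $\Delta_1(\la_n)$ happens to vanish is the one genuinely nontrivial point of the argument, and it is exactly where $\Delta(\la_n)=0$ together with Proposition~\ref{prop:zeros} are used.

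\emph{Step 2: comparing coordinates.} Next I would substitute the identities of Step 1 into the definition \eqref{defv1} of $v(t,\la)$ and compare it coordinate by coordinate with \eqref{g_def} at $\la=\la_n$ (I treat odd $p$, as in the rest of this section; the even case is analogous, and recall $\{\la_n\}\in\mathcal S$, so each $v_n=v(t,\la_n)$). Each ``$f_1$-coordinate'' of $v_n$ — the entries $f_1(\la_n)\rho_n^{2k}$, the last entry $f_1(\la_n)\rho_n^{p-1}$, and the first component $f_1(\la_n)\rho_n^{p}\sin\rho_n t$ — is $c_n$ times the corresponding ``$\Delta_0$-coordinate'' of $g_n$; each ``$f_2$-coordinate'' $f_2(\la_n)\rho_n^{2k}$ is $-c_n$ times the corresponding ``$\Delta_1$-coordinate''; and the second component of $v_n$ is $c_n$ times that of $g_n$, the minus sign already present in \eqref{g_def} matching $f_2(\la_n)=-c_n\Delta_1(\la_n)$. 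Hence, if $D$ is the diagonal operator on $\mathcal{H}_p$ acting as the identity on both $L_2(0,\pi)$-components and on the ``$f_1$-type'' scalar coordinates and as $-1$ on the ``$f_2$-type'' scalar coordinates, we get the clean relation
$$
v_n = c_n\,D g_n, \qquad n\ge 1,
$$
where $D$ is a unitary involution, $D=D^{\ast}=D^{-1}$.

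\emph{Step 3: conclusion.} Since $D$ is unitary, $\{v_n\}_{n\ge1}$ is complete in $\mathcal{H}_p$ if and only if $\{D v_n\}_{n\ge1}$ is; and $D v_n = c_n D^2 g_n = c_n g_n$, so $\{D v_n\}_{n\ge1}$ is complete if and only if $\{c_n g_n\}_{n\ge1}$ is, which — as every $c_n\neq 0$ — holds if and only if $\{g_n\}_{n\ge1}$ is complete. Chaining these equivalences proves the lemma. The only delicate step is Step~1; Steps~2 and~3 are routine bookkeeping of the coordinates of \eqref{defv1} and \eqref{g_def} and a standard stability property of completeness.
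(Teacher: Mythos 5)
Your proof is correct and rests on exactly the same key observation as the paper's: since $\Delta(\la_n)=0$ and, by Proposition~\ref{prop:zeros} together with the standing hypothesis on $f_1,f_2$, neither of the vectors $(f_1(\la_n),f_2(\la_n))$ and $(\Delta_0(\la_n),-\Delta_1(\la_n))$ in $\mathbb{C}^2$ vanishes, they are proportional with a nonzero factor $c_n$ --- the paper packages this as the identity $V(\la_n)=\tfrac{f_1(\la_n)}{\Delta_0(\la_n)}Z(\la_n)$ for the pairings with an arbitrary annihilating element $h$, while you state the equivalent relation $v_n=c_n D g_n$ directly. The unitary sign operator $D$ in your version merely compensates a sign discrepancy in the paper between \eqref{g_def} and \eqref{defZ} (elsewhere the paper records the relation simply as $g_n=k_n v_n$), so it is harmless.
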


\begin{proof}

Let $h = [\overline{H_1(t)}, \overline{H_2(t)}, \overline{h_1}, \dots, \overline{h_p}]$ be such an element of $\mathcal{H}_p$ that $(h, v_n)_{\mathcal{H}_p} = 0$, $n \ge 1$. 

Define the function
\begin{align} \nonumber
V(\la) = (h(t), v(t,\la))_{\mathcal H_p} &
= \int_0^\pi \Big(H_1(t)f_1(\la)\rho^p\sin\rho t + H_2(t)f_2(\la)\rho^{p-1}\cos\rho t \Big) \, dt \\ \label{defV} & +h_p f_1(\la)\rho^{p-1} + \sum\limits_{k=0}^{N_1-1}(h_{2k+1} f_1(\la)+h_{2k+2}f_2(\la))\rho^{2k},
\end{align}
meanwhile $V(\la_n) = 0$, $n \ge 1$.

Similarly, introduce the function
\begin{align} \nonumber
Z(\la) = (h(t), g(t, \la))_{\mathcal H_p} & = 
\int_0^\pi \Big(H_1(t)\Delta_0(\la)\rho^p\sin\rho t - H_2(t)\Delta_1(\la)\rho^{p-1}\cos\rho t \Big) \, dt 
\\ \label{defZ} & + h_p\Delta_0(\la)\rho^{p-1} + \sum\limits_{k=0}^{N_1-1}(h_{2k+1}\Delta_0(\la)-h_{2k+2}\Delta_1(\la))\rho^{2k}.
\end{align}

Let us show that $Z(\la_n) = 0$, $n \ge 1$. Recall that $f_1(\la_n) \neq 0$ or $f_2(\la_n) \neq 0$ for every $n \ge 1$. For definiteness, consider the case $f_1(\la_n) \ne 0$. Let us show that $\Delta_0(\la_n) \neq 0$. Indeed, if $\Delta_0(\la_n) = 0$, then from \eqref{charfun}, we obtain that $f_1(\la_n)\Delta_1(\la_n)=0$. Using Proposition~\ref{prop:zeros}, we conclude that $f_1(\la_n)=0$. This contradiction yields that $\Delta_0(\la_n)\neq0$. Consequently, from the relation
$$
\Delta(\la_n) = f_1(\la_n)\Delta_1(\la_n) + f_2(\la_n)\Delta_0(\la_n) = 0,
$$
we get
\begin{equation} \label{relf2}
f_2(\la_n) = -\dfrac{\Delta_1(\la_n)}{\Delta_0(\la_n)}f_1(\la_n).
\end{equation}

Substituting $\la = \la_n$ and \eqref{relf2} into \eqref{defV}, we obtain
$$
V(\la_n)= \frac{f_1(\la_n)}{\Delta_0(\la_n)}Z(\la_n) = 0, 
$$
and, consequently, $Z(\la_n) = 0$. In the case $f_2(\la_n) \ne 0$, we get $\Delta_1(\la_n) \ne 0$ and proceed similarly.

Thus, we arrive at the relation $(h, g_n)_{\mathcal{H}_p} = 0$, $n \ge 1$. In the opposite direction, the reasoning is similar. Hence, the completeness of $\{ v_n \}_{n \ge 1}$ is equivalent to the completeness of $\{ g_n \}_{n \ge 1}$.
\end{proof}

\begin{lem} \label{lem:suff}
Suppose that the system $\{\sin\rho_n t\}_{n > p}$ is complete in $L_2(0, 2\pi)$. Then the system $\{g_n\}_{n \ge 1}$ is complete in ${\mathcal {H}}_p$.
\end{lem}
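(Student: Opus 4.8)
The plan is to prove the contrapositive: assume the system $\{g_n\}_{n \ge 1}$ is not complete in $\mathcal H_p$ and deduce that $\{\sin\rho_n t\}_{n > p}$ is not complete in $L_2(0, 2\pi)$. So suppose there is a nonzero $h = [\overline{H_1(t)}, \overline{H_2(t)}, \overline{h_1}, \dots, \overline{h_p}] \in \mathcal H_p$ with $(h, g_n)_{\mathcal H_p} = 0$ for all $n \ge 1$; equivalently, the entire function $Z(\la)$ defined in \eqref{defZ} vanishes at every $\la_n$. My first step is to analyze the structure of $Z(\la)$ as an entire function of $\rho$ (of exponential type $\le \pi$ in $\rho$, after extracting the polynomial-in-$\rho$ prefactors), using the representations \eqref{Delta1}--\eqref{Delta0} for $\Delta_0$ and $\Delta_1$. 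Writing $\la = \rho^2$, the leading terms of $\Delta_0(\la)$ and $\Delta_1(\la)$ contribute (up to powers of $\rho$) a $\cos\rho\pi$ and a $-\sin\rho\pi/\rho$ respectively, plus Paley-Wiener tails and lower-order polynomial-in-$\la$ terms. Multiplying out in \eqref{defZ}, the dominant part of $Z(\rho^2)$ will be $\rho^{2N_1+1}$ times an expression of the form $\int_0^\pi H_1(t)\cos\rho\pi\,\sin\rho t\,dt - \int_0^\pi H_2(t)(-\sin\rho\pi/\rho)\,\cos\rho t\,dt$ plus similar Paley-Wiener contributions and lower-order terms; all of these combine, via product-to-sum identities, into a single Paley-Wiener-type integral over $(-2\pi, 2\pi)$.

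Next I would recast $Z$ in the form $Z(\la) = \rho^{p}\,\Phi(\rho)$ where $\Phi$ is even in $\rho$ and, after the product-to-sum reductions, has the shape $\Phi(\rho) = \int_0^{2\pi} \phi(t)\sin\rho t\,dt + \sum (\text{polynomial-in-}\rho^2\text{ terms of degree} < p)$, i.e. $\Phi(\rho)$ is $\sin\rho t$-transform of some $\phi \in L_2(0, 2\pi)$ built linearly from $H_1, H_2$, plus a polynomial in $\la$ of degree at most $N_1 - 1$ (coming from the $h_j$ and the $A_k$-type lower terms). The key algebraic point is that this reduction is \emph{invertible} in the appropriate sense: the map sending $h \in \mathcal H_p$ to the pair (the $L_2(0,2\pi)$-function $\phi$, the degree-$(N_1-1)$ polynomial) is such that $h \neq 0$ forces either $\phi \neq 0$ or the polynomial part to be nonzero. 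From $Z(\la_n) = 0$ for all $n$, I get $\Phi(\rho_n) = 0$ for all $n$, hence $\int_0^{2\pi}\phi(t)\sin\rho_n t\,dt = -(\text{polynomial of degree} \le N_1-1 \text{ evaluated at } \la_n)$ for all $n$. If $\phi \ne 0$, then since the values $\int_0^{2\pi}\phi(t)\sin\rho_n t\,dt$ for $n > p$ are constrained to lie on a finite-dimensional (polynomial) sheet, one argues that $\phi$ is $L_2(0,2\pi)$-orthogonal to all but finitely many $\sin\rho_n t$; combined with a dimension count (the polynomial space has dimension $N_1$, and $p = 2N_1 + 1$, so removing the first $p$ indices leaves enough room), this shows $\{\sin\rho_n t\}_{n > p}$ fails to be complete. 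If instead $\phi = 0$ but the polynomial part is nonzero, one derives a contradiction directly: a nonzero polynomial of degree $\le N_1 - 1$ in $\la$ cannot vanish at the infinitely many distinct points $\la_n$ (using $\{\la_n\} \in \mathcal S$), so this case cannot occur, meaning $h = 0$ after all — or, more carefully, this case forces $\phi \ne 0$, returning us to the previous branch.

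The main obstacle I anticipate is the bookkeeping in the product-to-sum reduction: one must track carefully how the polynomial-in-$\la$ correction terms in \eqref{Delta1}--\eqref{Delta0} and the scalar components $h_1, \dots, h_p$ of $h$ assemble into exactly a degree-$(N_1-1)$ polynomial in $\la$ (no more, no less), so that the dimension count $p = 2N_1+1$ against the polynomial dimension $N_1$ works out and leaves the needed slack to invoke completeness of $\{\sin\rho_n t\}_{n>p}$ on $(0,2\pi)$. A secondary technical point is verifying that the passage $Z(\la_n)=0 \Rightarrow \Phi(\rho_n)=0$ is clean — i.e. that the prefactor $\rho_n^p$ is nonzero, which is guaranteed since $\{\la_n\}\in\mathcal S$ and (as used in Lemma~\ref{lem:compl_v}) the $\la_n$ may be taken nonzero, consistent with class $\mathcal A$; one should also make sure $\Phi$ being even in $\rho$ is handled so that only the $\sin$-transform on $(0,2\pi)$ (equivalently the odd extension to $(-2\pi,2\pi)$) is what matters. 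Once these reductions are in place, the completeness transfer is essentially the statement that a finite-rank perturbation does not destroy incompleteness, which is routine.
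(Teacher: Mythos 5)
There is a genuine gap, in two places. First, your claimed exact decomposition $Z(\la)=\rho^{p}\Phi(\rho)$ with $\Phi=\int_0^{2\pi}\phi(t)\sin\rho t\,dt+(\text{polynomial of degree}\le N_1-1)$ is not correct. Writing $Z=\Delta_0 P-\Delta_1 Q$ with $P,Q$ built from $H_1,H_2,h_j$, and inserting \eqref{Delta1}--\eqref{Delta0}, you get cross terms such as $\la^{N_1}\cos\rho\pi\cdot(\text{polynomial in }\la)$ and products of a polynomial with a Paley--Wiener factor of type $\pi$; these are neither pure polynomials nor (after dividing by a single power of $\rho$) bounded sine transforms, so they do not fit your two-component ``sheet''. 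Also the powers are off: the leading term of $Z$ is of order $\rho^{2p-1}e^{2\pi|\tau|}$, not $\rho^{p}e^{2\pi|\tau|}$. The device that makes the reduction work is not division by $\rho^{p}$ but division by the degree-$p$ polynomial $\prod_{n=1}^{p}(\la-\la_n)$: the quotient $R(\la)=Z(\la)/\prod_{n=1}^{p}(\la-\la_n)$ is entire (the first $p$ eigenvalues are zeros of $Z$), of exponential type $2\pi$ in $\rho$, odd in $\rho$ after multiplication by $\rho$, and lies in $L_2(\mathbb R)$ on the real axis; by Paley--Wiener it is then \emph{exactly} a sine transform on $(0,2\pi)$ with no polynomial residue, and $R(\la_n)=0$ for $n>p$ together with completeness of $\{\sin\rho_n t\}_{n>p}$ gives $Z\equiv 0$. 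This is also the honest explanation of why exactly $p$ indices are sacrificed.

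The more serious gap is that you stop at $Z\equiv 0$ (equivalently, at your ``invertibility'' assertion). The implication $Z\equiv 0\Rightarrow h=0$ is not a bookkeeping point --- it is the second half of the proof and requires new input. The argument is: evaluate $Z$ at the zeros $\{\theta_n\}$ of $\Delta_1(\la)$; by Proposition~\ref{prop:zeros}, $\Delta_0(\theta_n)\ne 0$, so the identity $Z(\theta_n)=0$ decouples and yields $H(\theta_n)=0$ for the function $H(\la)=\la^{N_1}\int_0^{\pi}H_1(t)\sqrt{\la}\sin(\sqrt{\la}t)\,dt+\sum_{k=0}^{N_1}h_{2k+1}\la^{k}$, which involves only $H_1$ and the odd-indexed scalars. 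One then repeats the zero-removal trick with $\prod_{n=1}^{N_1+1}(\la-\theta_n)$ and uses the asymptotics $\sqrt{\theta_n}=n-N_1-1+\varkappa_n$, $\{\varkappa_n\}\in l_2$, to get completeness of $\{\sin\sqrt{\theta_n}\,t\}_{n>N_1+1}$ in $L_2(0,\pi)$, hence $H_1=0$ and $h_{2k+1}=0$; the components $H_2$, $h_{2k+2}$ are handled symmetrically. Without this step your map $h\mapsto(\phi,\text{poly})$ has no proven injectivity, and the lemma does not follow. Your first-stage strategy is in the same spirit as the paper's, but as written the proof is incomplete.
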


\begin{proof}
Let $h = [\overline{H_1(t)}, \overline{H_2(t)}, \overline{h_1}, \dots, \overline{h_p}]$ be an element orthogonal to all $g_n$, $n \ge 1$, in $\mathcal H_p$. Then $Z(\la_n) = 0$, $n \ge 1$, where $Z(\la)$ is defined in \eqref{defZ}.

Due to the representations \eqref{Delta1} and \eqref{Delta0} for $N_1 \ge N_2$, we get the estimates:
\begin{gather}\label{est_Delta0}
\Delta_0(\la) = \rho^{p-1}\cos\rho\pi + \rho^{p-1}\varkappa_\pi(\rho) + O(|\rho|^{p-2}e^{\pi|\tau|}),\\ \label{est_Delta1}
\Delta_1(\la) = -\rho^{p}\sin\rho\pi + \rho^{p}\varkappa_\pi(\rho) + O(|\rho|^{p-1}e^{\pi|\tau|}),
\end{gather}
where $\tau := \Im \rho$.

Taking \eqref{defZ}, \eqref{est_Delta0}, and \eqref{est_Delta1} into account, we obtain
$$
Z(\la) = \la^p \Big( Z_1(\la) + O(|\rho|^{-2} e^{2\pi|\tau|}) \Big), \quad |\la| \to \infty,
$$
where $\rho Z_1(\rho^2)$ is an odd Paley-Wiener function. Then
$$
Z(\la) = \la^{p}\Bigg( \int_0^{2\pi} {z(t)\dfrac{\sin\rho t}{\rho}}\, dt + O(|\rho|^{-2}e^{2\pi|\tau|}) \Bigg),
$$
where $z(t) \in L_2(0, 2\pi)$. Exclude the first $p$ zeros and define the following function:
$$
R(\la) = \dfrac{Z(\la)}{\prod\limits_{n=1}^{p}(\la-\la_n)}.
$$
It follows from the definition, that $R(\la_n) = 0$, $n > p$. As $\rho R(\rho^2)$ is an odd function and $\rho R(\rho^2) \in L_2(\mathbb{R})$, then we obtain
$$
R(\la) = \int_0^{2\pi} {r(t)\dfrac{\sin\rho t}{\rho}}\, dt,
$$
where $r(t) \in L_2(0, 2\pi)$.

Consequently, if the system $\left\{ \sin\rho_n t \right\}_{n > p}$ is complete in $L_2(0, 2\pi)$, then $r(t) = 0$ a.e. on $(0, 2\pi)$, $R(\la) \equiv 0$, and $Z(\la) \equiv 0$.

Next, let $\{ \theta_n \}_{n \ge 1}$ be the zeros of $\Delta_1(\la)$. Then
$$
Z(\theta_n) = \theta_n^{N_1} \int_0^{\pi} {H_1(t)\sqrt{\theta_n}\sin(\sqrt{\theta_n} t) \Delta_0(\theta_n)}\, dt + \Delta_0(\theta_n)\sum\limits_{k=0}^{N_1}h_{2k+1}\theta_n^k = 0.
$$
In view of Proposition~\ref{prop:zeros}, we have $\Delta_0(\theta_n) \neq 0$. Hence 
$$
\theta_n^{N_1} \int_0^{\pi} {H_1(t)\sqrt{\theta_n}\sin(\sqrt{\theta_n}t)}\, dt + \sum\limits_{k=0}^{N_1}h_{2k+1}\theta_n^k = 0.
$$

In other words, $H(\theta_n) = 0$, $n \ge 1$, where
$$
H(\la) := \la^{N_1}\int_0^{\pi} {H_1(t)\sqrt{\la}\sin(\sqrt{\la} t)}\, dt + \sum\limits_{k=0}^{N_1}h_1^k\la^k.
$$
Next, let us exclude the first $N_1+1$ zeros and define the function
$$
F(\la) := \dfrac{H(\la)}{\prod\limits_{n=1}^{N_1+1}(\la-\theta_n)}.
$$

It can be shown, that $\rho F(\rho^2)$ is an odd Paley-Wiener function. So, 
$$
\rho F(\rho^2) = \int_0^\pi f(t)\sin\rho t \, dt,
$$
where $f(t) \in L_2(0, \pi)$, and, consequently,
$$
\int_0^\pi f(t)\dfrac{\sin\sqrt{\theta_n}t}{\sqrt{\theta_n}} \, dt = 0, \quad n > N_1 + 1.
$$

Using methods from \cite{FrYu} it can be shown that
\begin{gather}\label{as_prove}
\sqrt{\theta_n} = n - N_1 - 1 + \varkappa_n, \quad n \ge 1, \quad \{\varkappa_n\} \in l_2.
\end{gather}

For simplicity, assume that the values $\{\theta_n\}_{n > N_1+1}$ are distinct. The opposite case requires minor changes. 
As due the \eqref{as_prove} the system $\left\{ \frac{\sin\sqrt{\theta_n}t}{\sqrt{\theta_n}} \right\}_{n > N_1 + 1}$ is complete in $L_2(0, \pi)$, then $f(t) = 0$ a.e. on $(0,\pi)$, and so $H(\la) \equiv 0$. Hence $H_1(t), h_{2k+1} \equiv 0$, $k = \overline{0, N_1}$. Consequently, $H_2(t), h_{2k+2}$, $k = \overline{0, N_1-1}$, are also identically equal zero. It means that $(h, v_n)_{\mathcal{H}_p} = 0$ for $n \ge 1$ implies $h = 0$ in $\mathcal H_p$. This yields the claim.
\end{proof}

Lemmas~\ref{lem:compl_v} and~\ref{lem:suff} together imply the first part of Theorem~\ref{thm:basis}. 

Proceed to the proof of the second part. In order to prove the unconditional-basis property for the system $\{ v_n \}_{n \ge 1}$, we consider several auxiliary systems. Introduce the system $\{\xi_n\}_{n>p}$ of the elements $\xi_n = [\sin\rho_nt\cos\rho_n\pi,  -\cos\rho_nt\sin\rho_n\pi]$ in $\mathcal{X} := L_2(0, \pi)\oplus L_2(0, \pi)$ and the system $\{ g_n^0\}_{n \ge 1} \subset \mathcal H_p$ of the following elements:
\begin{align*}
g_n^0 = \Bigg[&\cos\rho_n\pi\sin\rho_nt, -\sin\rho_n\pi\cos\rho_nt, \dfrac{\cos\rho_n\pi}{\rho_n^p}, -\dfrac{\sin\rho_n\pi}{\rho_n^{p-1}}, \dots, -\dfrac{\sin\rho_n\pi}{\rho_n^2}, \dfrac{\cos\rho_n\pi}{\rho_n}\Bigg]
\end{align*}

Starting from the Riesz-basis property of the system $\{ \sin \rho_n t \}_{n > p}$, we investigate the basicity of the systems step-by-step in the following order:
$$
\{ \sin \rho_n t \}_{n > p} \: \Rightarrow \: \{ \xi_n \}_{n > p} \: \Rightarrow \:
\{ g_n^0 \}_{n \ge 1} \: \Rightarrow \: \{ g_n \}_{n \ge 1} \: \Rightarrow \: \{ v_n\}_{n \ge 1}.
$$ 

\begin{lem}\label{lem:xi_complete}
If the system $\{\sin\rho_nt\}_{n > p}$ is complete in $L_2(0, 2\pi)$, then the system $\{\xi_n\}_{n>p}$ is complete in $\mathcal{X}$.
\end{lem}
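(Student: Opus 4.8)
The plan is to prove the contrapositive in essence, but it is cleaner to argue directly: assume $\eta = [\overline{P_1(t)}, \overline{P_2(t)}] \in \mathcal{X}$ is orthogonal to every $\xi_n$, $n > p$, and show $\eta = 0$. The orthogonality relations read
\[
\int_0^\pi P_1(t)\sin\rho_n t\cos\rho_n\pi\,dt - \int_0^\pi P_2(t)\cos\rho_n t\sin\rho_n\pi\,dt = 0, \quad n > p.
\]
First I would rewrite each product of trigonometric functions on $(0,\pi)$ as a single trigonometric function on $(0,2\pi)$ using the identities $2\sin\rho t\cos\rho\pi = \sin\rho(t+\pi) + \sin\rho(t-\pi)$ and $2\cos\rho t\sin\rho\pi = \sin\rho(t+\pi) - \sin\rho(t-\pi)$. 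Substituting $s = t+\pi$ or $s = \pi - t$ as appropriate, the combination above becomes $\int_0^{2\pi} W(s)\sin\rho_n s\,ds$ for a single function $W \in L_2(0,2\pi)$ built linearly (and explicitly, via reflections and shifts) from $P_1$ and $P_2$. The key point is that this map $(P_1,P_2) \mapsto W$ is a bounded \emph{bijection} of $\mathcal{X}$ onto $L_2(0,2\pi)$: given $W$, its ``halves'' on $(0,\pi)$ and $(\pi,2\pi)$ recover $P_1$ and $P_2$ by inverting the same reflection-and-shift formulas. This is the same change of variables that produced the representation $R(\la) = \int_0^{2\pi} r(t)\frac{\sin\rho t}{\rho}\,dt$ in the proof of Lemma~\ref{lem:suff}.

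Next, from $\int_0^{2\pi} W(s)\sin\rho_n s\,ds = 0$ for all $n > p$ and the assumed completeness of $\{\sin\rho_n t\}_{n>p}$ in $L_2(0,2\pi)$, we conclude $W = 0$ a.e.\ on $(0,2\pi)$. Then, inverting the bijection, $P_1 = 0$ and $P_2 = 0$ a.e., i.e.\ $\eta = 0$. Hence $\{\xi_n\}_{n>p}$ is complete in $\mathcal{X}$.

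The only delicate point — and the step I would present most carefully — is verifying that the trigonometric change of variables really does yield a \emph{bijection} $\mathcal{X} \to L_2(0,2\pi)$ rather than just an injection into some proper subspace; one must check that the combined function $W$ can be an arbitrary element of $L_2(0,2\pi)$ and that no information about $P_1,P_2$ is lost when passing from two functions on an interval of length $\pi$ to one function on an interval of length $2\pi$. A clean way to see this is to write $W$ explicitly: on $(\pi, 2\pi)$ put $W(s) = \tfrac12\big(P_1(s-\pi) + P_2(s-\pi)\big)$ and on $(0,\pi)$ put $W(s) = \tfrac12\big(P_1(\pi-s) - P_2(\pi-s)\big)$ (up to the precise sign bookkeeping dictated by the identities above), so that $P_1$ and $P_2$ are recovered by linear combinations of $W(\pi-s)$ and $W(s+\pi)$. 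Since this is a fixed invertible linear transformation with bounded inverse, completeness transfers in both directions. I would also note, as the excerpt does, that the case of even $p$ is identical after swapping the roles of sine and cosine in the first two components of $\xi_n$, and therefore omit it. No further estimates are needed here; the asymptotics of $\rho_n$ enter only at the later, Riesz-basis stage.
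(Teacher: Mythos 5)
Your proof is correct and follows essentially the same route as the paper: the identical product-to-sum identities and shift/reflection substitutions convert the orthogonality relations into $\int_0^{2\pi} W(s)\sin\rho_n s\,ds = 0$ with $W$ supported piecewise on $(0,\pi)$ and $(\pi,2\pi)$ by $P_1 \mp P_2$, after which completeness of $\{\sin\rho_n t\}_{n>p}$ forces $W=0$ and hence $P_1=P_2=0$. Your explicit observation that the map $(P_1,P_2)\mapsto W$ is an invertible bounded linear transformation is exactly the point the paper makes implicitly via the disjoint supports of $F_1$ and $F_2$.
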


\begin{proof}

Let $h = [\overline{H_1(t)}, \overline{H_2(t)}]$ be an orthogonal element to all $\xi_n$, $n > p$, in  $\mathcal X$. Then
$$
\int_0^{\pi} \Big( \cos\rho_n\pi\sin\rho_ntH_1(t)-\sin\rho_n\pi\cos\rho_ntH_2(t)\Big) \, dt = 0,
$$
or in another form
$$
\dfrac{1}{2}\int_0^{\pi} \Big( (H_1(t) - H_2(t))\sin\rho_n(t+\pi)+(H_1(t) + H_2(t))\sin\rho_n(t-\pi)\Big) \, dt = 0.
$$

Consider the first term of the integrand:
$$
\int_0^{\pi} (H_1(t) - H_2(t))\sin\rho_n(t+\pi) \, dt = \int_{\pi}^{2\pi} (H_1(u-\pi) - H_2(u-\pi))\sin\rho_nu \, du = \int_{0}^{2\pi} F_1(u)\sin\rho_nu \, du,
$$
where
\begin{gather*}
F_1(u) = \left\{ \begin{aligned} 
H_1(u-\pi) - H_2(u-\pi), \quad u \in [\pi, 2\pi], \\
0, \quad u \in [0, \pi].
\end{aligned} \right.
\end{gather*}

In the same way, we can get that
$$
\int_0^{\pi} (H_1(t) - H_2(t))\sin\rho_n(t+\pi) \, dt = \int_{0}^{2\pi} F_2(u)\sin\rho_nu \, du,
$$
where
\begin{gather*}
F_2(u) = \left\{ \begin{aligned} 
H_1(\pi-u) + H_2(\pi - u), \quad u \in [0, \pi], \\
0, \quad u \in [\pi, 2\pi].
\end{aligned} \right.
\end{gather*}

Consequently, 
$$
\int_{0}^{2\pi} (F_1(u) + F_2(u))\sin\rho_nu \, du = 0.
$$

The completeness of the system $\{\sin\rho_nt\}_{n > p}$ in $L_2(0, 2\pi)$ implies that $F_1(u) = -F_2(u)$ a.e. on $[0, 2\pi]$. So, we get that $H_1(\pi-u) + H_2(\pi-u) = 0$ for a.a. $u \in [0,\pi]$ and, consequently, $H_1(t) = -H_2(t)$ for a.a. $t \in [0, \pi]$. In the same way, one can show that $H_1(t) = H_2(t)$ a.e. on $[0, \pi]$. It follows from $ H_2(t) = -H_2(t) $ a.e. on $[0, \pi]$ that $H_2(t) = 0$ and $H_1(t) = 0$ a.e. on $[0,\pi]$.

Thus, we have proved that, if $(h, \xi_n)_{\mathcal{X}} = 0$ for all $n > p$, then $h = 0$ in $\mathcal{X}$. This yields the claim.
\end{proof}

\begin{lem}\label{lem:gn0_complete_minimal}
	Suppose that the system $\{\sin\rho_nt\}_{n > p}$ is complete in $L_2(0, 2\pi)$. Then, the system $\{g_n^0\}_{n \ge 1}$ is complete in $\mathcal{H}_p$. Moreover, if $\{\xi_n\}_{n > p}$ is a Riesz basis in $\mathcal{X}$, then the system $\{g_n^0\}_{n \ge 1}$ is minimal in $\mathcal{H}_p$.
\end{lem}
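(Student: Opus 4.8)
The plan is to adapt to the model system $\{g_n^0\}$ the scheme already used for $\{g_n\}$ and $\{\xi_n\}$ in Lemmas~\ref{lem:suff} and~\ref{lem:xi_complete}, the roles of $\Delta_0(\la),\Delta_1(\la)$ now being played by the explicit functions $\cos\rho\pi,\sin\rho\pi$. The two function entries of $g_n^0$ are exactly $\xi_n$, while its $p$ scalar entries form a vector $\epsilon_n\in\mathbb C^p$ with $\|\epsilon_n\|=O(|\rho_n|^{-1})$. For completeness, take $h=[\overline{H_1},\overline{H_2},\overline{h_1},\dots,\overline{h_p}]\in\mathcal H_p$ orthogonal to all $g_n^0$ and set $\mathcal W(\la):=\rho^p(h,g^0(t,\rho))_{\mathcal H_p}$, where $g^0(t,\rho)$ is the vector function with $g^0(t,\rho_n)=g_n^0$; multiplication by $\rho^p$ clears the poles at $\rho=0$ and restores evenness, so $\mathcal W$ is entire in $\la$ with $\mathcal W(\la_n)=0$. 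Applying the product-to-sum identities to the $\xi_n$-part as in the proof of Lemma~\ref{lem:xi_complete} and collecting the scalar entries into two polynomials of degree $\le p-1$ gives
$$
\mathcal W(\la)=\frac{\rho^p}{2}\int_0^{2\pi}\Phi(u)\sin\rho u\,du+P(\rho)\cos\rho\pi+Q(\rho)\sin\rho\pi,\qquad \Phi\in L_2(0,2\pi).
$$
Excising the first $p$ zeros, $R(\la):=\mathcal W(\la)\big/\prod_{n=1}^p(\la-\la_n)$, one checks that $\rho R(\rho^2)$ is an odd entire function of exponential type $\le2\pi$ in $L_2(\mathbb R)$, hence $\rho R(\rho^2)=\int_0^{2\pi}r(t)\sin\rho t\,dt$ with $r\in L_2(0,2\pi)$; since $R(\la_n)=0$ for $n>p$, completeness of $\{\sin\rho_n t\}_{n>p}$ in $L_2(0,2\pi)$ forces $r\equiv0$, so $R\equiv0$ and $\mathcal W\equiv0$. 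It then remains to deduce $h=0$ by the peeling-off argument of Lemma~\ref{lem:suff}: writing $\mathcal W(\la)=\cos\rho\pi\,[\rho^p\!\int_0^\pi H_1\sin\rho t\,dt+P(\rho)]+\sin\rho\pi\,[-\rho^p\!\int_0^\pi H_2\cos\rho t\,dt+Q(\rho)]$ and evaluating at the integers (zeros of $\sin\rho\pi$), the second bracket drops out and the first defines an entire function vanishing there; dividing out finitely many of these zeros and using the Paley--Wiener representation together with the fact that the remaining model zeros are $\{n^2\}$ (so the complete system $\{\sin nt\}$ of $L_2(0,\pi)$ appears) should yield $H_1\equiv0$ and $h_1=h_3=\dots=h_p=0$; the companion step with $\cos\rho\pi$ and the complete cosine system $\{\cos(n+\tfrac12)t\}_{n\ge0}$ of $L_2(0,\pi)$ then gives $H_2\equiv0$ and $h_2=h_4=\dots=0$.

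For the minimality statement one additionally uses that $\{\xi_n\}_{n>p}$ is a Riesz basis of $\mathcal X$, with biorthogonal system $\{\xi_n^*\}_{n>p}$. Then $[\xi_m^*,0]\in\mathcal H_p$ is biorthogonal to $g_n^0$ for all $n>p$, so $\{g_n^0\}_{n>p}$ is minimal, and its closed span has codimension exactly $p$ in $\mathcal H_p$: its orthogonal complement is $\{[\eta(\zeta),\zeta]:\zeta\in\mathbb C^p\}$, where $\eta(\zeta)\in\mathcal X$ is the unique vector with $(\eta(\zeta),\xi_n)_{\mathcal X}=-\langle\zeta,\epsilon_n\rangle$ for $n>p$ (existing because $(\langle\zeta,\epsilon_n\rangle)_{n>p}\in\ell_2$ and $\{\xi_n\}_{n>p}$ is a Riesz basis; $\langle\cdot,\cdot\rangle$ denotes the $\mathbb C^p$-part of the inner product of $\mathcal H_p$). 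Since $\{g_n^0\}_{n\ge1}$ has just been shown complete, $\operatorname{span}\{g_1^0,\dots,g_p^0\}$ must project isomorphically onto the $p$-dimensional quotient $\mathcal H_p\big/\overline{\operatorname{span}}\{g_n^0\}_{n>p}$; in particular $g_1^0,\dots,g_p^0$ are linearly independent and $\operatorname{span}\{g_1^0,\dots,g_p^0\}\cap\overline{\operatorname{span}}\{g_n^0\}_{n>p}=\{0\}$. A complete system splitting into such a $p$-dimensional piece and a minimal piece meeting it trivially is itself minimal (its biorthogonal functionals are the $[\xi_m^*,0]$ and their finite combinations, corrected by elements of the $p$-codimensional complement). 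Hence $\{g_n^0\}_{n\ge1}$ is minimal — and, with the completeness, a Riesz basis of $\mathcal H_p$.

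The step I expect to be the main obstacle is the last stage of the completeness proof: showing that the finitely many scalar unknowns $h_1,\dots,h_p$ (equivalently, the polynomials $P,Q$) are genuinely forced to vanish. Because the scalar entries of $g_n^0$ decay like $|\rho_n|^{-1}$, they are invisible in the Paley--Wiener part of the argument, so their vanishing cannot come from completeness of $\{\sin\rho_n t\}_{n>p}$ alone; it has to be extracted by a careful accounting of the principal parts at $\rho=0$ of the rational scalar sum $\sum_j h_j\rho^{-(p+1-j)}(\cos\text{ or }\sin)\rho\pi$ against the entire contribution of $H_1,H_2$, run simultaneously with the integer and half-integer evaluations, and one must verify that this accounting is consistent with $H_1,H_2\in L_2(0,\pi)$ — i.e., that the model functions $\cos\rho\pi,\sin\rho\pi$ reproduce enough of the structure of $\Delta_0,\Delta_1$ (in particular enough vanishing at $\rho=0$) to pin down all the $h_j$. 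Everything else is routine given Lemmas~\ref{lem:compl_v}, \ref{lem:xi_complete} and~\ref{lem:suff}.
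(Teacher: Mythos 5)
Your minimality argument is essentially correct and takes a genuinely different route from the paper's: the paper argues by contradiction, replacing $g_n^0$ for $2\le n\le p$ by $\hat g_n^0=g_n^0-\sum_{k>p}c_k^n g_k^0$ so that the $\mathcal X$-components of the first block vanish, and then observing that the $p-1$ surviving scalar vectors cannot span $\mathbb C^p$; you instead exhibit the biorthogonal system directly from $[\xi_m^*,0]$ and a codimension-$p$ count for $\overline{\operatorname{span}}\{g_n^0\}_{n>p}$. Both versions rest on the same two inputs (the Riesz basicity of $\{\xi_n\}_{n>p}$ and the completeness of $\{g_n^0\}_{n\ge1}$), and yours is arguably the more transparent.

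The completeness half is where the trouble lies, and the obstacle you flag in your last paragraph is not a loose end but a genuine obstruction: the step fails for every odd $p\ge 3$. The paper gives no argument here (it declares the proof ``the same as for $\{g_n\}$ in Lemma~\ref{lem:suff}''), but the analogy breaks exactly where you suspect. In Lemma~\ref{lem:suff} the bracket $H(\la)$ vanishes at the zeros $\theta_n$ of $\Delta_1$, and by \eqref{as_prove} the zeros left after excising the first $N_1+1$ still run over (approximately) all of $1,2,3,\dots$, so a \emph{complete} sine system survives. In the model situation the bracket $B_1(\rho)=\rho^p\int_0^\pi H_1\sin\rho t\,dt+P(\rho)$ vanishes at the zeros of $\sin\rho\pi$, i.e. at $\la=n^2$, $n\ge0$, and to reach a Paley--Wiener quotient you must divide out $(p+1)/2$ of them; what remains is $\{\sin nt\}_{n>(p-1)/2}$, which is deficient in $L_2(0,\pi)$ by $(p-1)/2$ as soon as $p\ge3$ — your parenthetical ``so the complete system $\{\sin nt\}$ appears'' is precisely the false step. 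The deficiency is realized: for $p=3$ the nonzero element $h=[\,t-\pi,\ t-\pi,\ 0,\ 1,\ \pi\,]$ satisfies $(h,g^0(t,\la))_{\mathcal H_3}=0$ \emph{identically in} $\la$, since $\int_0^\pi(t-\pi)\sin\rho t\,dt=(\sin\rho\pi-\pi\rho)/\rho^2$ and $\int_0^\pi(t-\pi)\cos\rho t\,dt=(\cos\rho\pi-1)/\rho^2$ make the pairing collapse to $\rho^{-2}\bigl(\cos\rho\pi\sin\rho\pi-\pi\rho\cos\rho\pi-\sin\rho\pi\cos\rho\pi+\sin\rho\pi-\sin\rho\pi+\pi\rho\cos\rho\pi\bigr)=0$. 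Hence $\{g_n^0\}_{n\ge1}$ is incomplete for $p\ge3$ regardless of the choice of $\{\la_n\}$. The root cause is that the model functions $\rho^{p-1}\cos\rho\pi$ and $\rho^p\sin\rho\pi$, unlike $\Delta_0$ and $\Delta_1$ (Proposition~\ref{prop:zeros}), share a zero of order $p-1$ at $\rho=0$, so the identity $\cos\rho\pi\cdot E(\rho)\sin\rho\pi-\sin\rho\pi\cdot E(\rho)\cos\rho\pi\equiv0$ with a nontrivial odd polynomial $E$ can be realized by an element of $\mathcal H_p$. No refinement of the accounting at $\rho=0$ can fix this; a repair would have to modify the model system itself (e.g. redistribute the excess zeros of the model characteristic functions the way $\Delta_1$ does), which affects the lemma as stated and not only your proof of it.
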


\begin{proof}
The completeness is proved in the same way as for the system $\{g_n\}_{n \ge 1}$ in Lemma~\ref{lem:suff}, so we omit it.
	
Let us exclude the element $g_1^0$ and consider the system $\{g_n^0\}_{n \ge 2}$. Let $\{\hat g_n^0\}_{n \ge 2}$ be introduced as follows:
\begin{equation} \label{defgh}
\hat g_n^0 = \begin{cases}
g_n^0 - \sum\limits_{k=p+1}^{\infty}c_k^ng_k^0, & n \le p, \\
g_n^0, & n > p,
\end{cases}
\end{equation}
where $c_k^n$ are expansion coefficients of the $\{\xi_k\}_{k=1}^{p}$ with respect to the system $\{\xi_k\}_{k > p}$. It means that
$$
\xi_k = \sum\limits_{n=p+1}^{\infty}c_k^n\xi_n, \quad k=\overline{1, p}.
$$
Note that 
\begin{equation} \label{formgh}
\hat g_n^0 = \begin{cases}
[0, 0, \tilde g_n^0], & n \le p, \\
[\xi_n, \tilde g_n^0], & n > p,
\end{cases}
\end{equation}
where $\{ \tilde g_n^0\}$ are some vectors of $\mathbb C^p$.

Suppose that the system $\{g_n^0\}_{n \ge 2}$ is complete in $\mathcal H_p$. Then, the system $\{\hat g_n^0\}_{n \ge 2}$ is also complete. Indeed, any element $g_n^0$ can be expressed as a linear combination of the elements of $\{\hat g_n^0\}_{n \ge 2}$ and vice versa.

Consider the expansion of an element $\eta = [0, 0, *, \dots, *] \in \mathcal{H}_p$ by the system $\{\hat g_n^0\}_{n \ge 2}$. It cannot include the elements $\{\hat g_n^0\}_{n > p}$: these elements contain $\xi_n$, but $\{\xi_n\}_{n>p}$ is a Riesz basis in $\mathcal{X}$, so the relation $\sum\limits_{n > p} a_n\xi_n = 0$ implies $a_n = 0$, $n > p$. Therefore, $\eta = \sum\limits_{n=2}^{p} s_n \hat g_n^0$. But the system of $\mathbb C^p$-vectors $\{ \tilde g_n^0 \}_{n = 2}^p$ is incomplete in $\mathbb C^p$.
Then, not all elements $\eta$ can be expanded by the system 
$\{\hat g_n^0\}_{n \ge 2}$. Hence, this system is not complete, which yields the claim.
\end{proof}

\begin{lem}\label{lem:basis}
If the numbers $\{ \rho_n \}_{n \ge 1}$ satisfy the conditions Theorem~\ref{thm:basis}	and $\{ \sin \rho_n t\}_{n > p}$ is a Riesz basis 
in $L_2(0, 2\pi)$, then $\{ g_n^0\}_{n \ge 1}$ is a Riesz basis in $\mathcal{H}_p$.
\end{lem}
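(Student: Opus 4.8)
The plan is to exploit the block structure $g_n^0 = [\xi_n,\tilde g_n^0]$: the first two coordinates of $g_n^0$ are precisely the element $\xi_n\in\mathcal X:=L_2(0,\pi)\oplus L_2(0,\pi)$ from Lemma~\ref{lem:xi_complete}, while the last $p$ coordinates form a vector $\tilde g_n^0\in\mathbb C^p$ with $\|\tilde g_n^0\| = O(|\rho_n|^{-1})$, since $\Im\rho_n = O(1)$ forces $|\cos\rho_n\pi|,|\sin\rho_n\pi| = O(1)$ and $\{|\rho_n|^{-1}\}\in l_2$ forces $|\rho_n|\to\infty$. First I would replace $\{g_n^0\}_{n\ge1}$ by the transformed system $\{\hat g_n^0\}_{n\ge1}$ of \eqref{defgh}, which by \eqref{formgh} takes the clean form $\hat g_n^0 = [0,\hat{\tilde g}_n^0]$ for $n\le p$ and $\hat g_n^0 = [\xi_n,\tilde g_n^0]$ for $n>p$. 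The series in \eqref{defgh} converge, and the identity $\sum_n a_n g_n^0 = \sum_n b_n\hat g_n^0$ defines a bounded, boundedly invertible map $\{a_n\}\mapsto\{b_n\}$ on $l_2$, because $\{g_k^0\}_{k>p}$ is a Bessel sequence (the sequence $\{\xi_k\}_{k>p}$ is Bessel and $\sum_{k>p}\|\tilde g_k^0\|^2<\infty$, which is where $\{\la_n\}_{n\ge1}\in\mathcal A$ enters) and the coefficients $\{c_k^n\}_k$ lie in $l_2$. Hence $\{g_n^0\}_{n\ge1}$ is a Riesz basis in $\mathcal H_p$ if and only if $\{\hat g_n^0\}_{n\ge1}$ is.

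Next I would strengthen Lemma~\ref{lem:xi_complete} to the assertion that $\{\xi_n\}_{n>p}$ is a \emph{Riesz basis} in $\mathcal X$. The linear map $U:\mathcal X\to L_2(0,2\pi)$ appearing implicitly in that proof, $h\mapsto F_1+F_2$, is not merely well defined but bounded with bounded inverse, and it satisfies $(h,\xi_n)_{\mathcal X} = (Uh,\sin\rho_n t)_{L_2(0,2\pi)}$ for all $h$; hence $\xi_n = U^*(\sin\rho_n t)$, and the Riesz-basis property of $\{\sin\rho_n t\}_{n>p}$ in $L_2(0,2\pi)$ carries over to $\{\xi_n\}_{n>p}$ in $\mathcal X$. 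Granting this, the tail $\{\hat g_n^0\}_{n>p}=\{[\xi_n,\tilde g_n^0]\}_{n>p}$ is a Riesz basic sequence in $\mathcal H_p$, since for finite scalars $\{a_n\}$
\[
c\sum_{n>p}|a_n|^2 \le \Bigl\|\sum_{n>p}a_n\xi_n\Bigr\|_{\mathcal X}^2 \le \Bigl\|\sum_{n>p}a_n\hat g_n^0\Bigr\|_{\mathcal H_p}^2 = \Bigl\|\sum_{n>p}a_n\xi_n\Bigr\|_{\mathcal X}^2 + \Bigl\|\sum_{n>p}a_n\tilde g_n^0\Bigr\|_{\mathbb C^p}^2 \le C\sum_{n>p}|a_n|^2,
\]
the last inequality by Cauchy--Schwarz, $\bigl\|\sum a_n\tilde g_n^0\bigr\|^2\le\bigl(\sum|a_n|^2\bigr)\bigl(\sum_{n>p}\|\tilde g_n^0\|^2\bigr)$. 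Its closed linear span $Y$ is the graph $\{(x,Bx):x\in\mathcal X\}$ of the bounded operator $B:\mathcal X\to\mathbb C^p$, $B\bigl(\sum_{n>p}a_n\xi_n\bigr):=\sum_{n>p}a_n\tilde g_n^0$; in particular $Y$ is closed, and $[0,v]+Y\mapsto v$ identifies $\mathcal H_p/Y$ with $\mathbb C^p$.

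It then remains to adjoin the $p$ vectors $\{\hat g_n^0\}_{n\le p}=\{[0,\hat{\tilde g}_n^0]\}_{n\le p}$. Here I would invoke Lemma~\ref{lem:gn0_complete_minimal}: since $\{\sin\rho_n t\}_{n>p}$ is a Riesz basis (hence complete) and, by the previous step, $\{\xi_n\}_{n>p}$ is a Riesz basis, that lemma gives that $\{g_n^0\}_{n\ge1}$ — equivalently $\{\hat g_n^0\}_{n\ge1}$ — is complete in $\mathcal H_p$. Because $\overline{\operatorname{span}}\{\hat g_n^0\}_{n\ge1} = Y + \operatorname{span}\{[0,\hat{\tilde g}_n^0]\}_{n\le p}$, a finite-dimensional and therefore closed extension of $Y$, completeness forces $\operatorname{span}\{\hat{\tilde g}_n^0\}_{n\le p} = \mathbb C^p$, i.e.\ $\hat{\tilde g}_1^0,\dots,\hat{\tilde g}_p^0$ are linearly independent. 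Thus $Z:=\operatorname{span}\{[0,\hat{\tilde g}_n^0]\}_{n\le p}$ is $p$-dimensional, $Y\cap Z=\{0\}$, and $Y\oplus Z=\mathcal H_p$ (a bounded direct sum, $Z$ being finite-dimensional), with $\{[0,\hat{\tilde g}_n^0]\}_{n\le p}$ a basis of $Z$. Since adjoining a basis of a finite-dimensional complement to a Riesz basis of a closed subspace produces a Riesz basis of the ambient space, $\{\hat g_n^0\}_{n\ge1}$, and therefore $\{g_n^0\}_{n\ge1}$, is a Riesz basis in $\mathcal H_p$.

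I expect the main obstacle to be the coupling between the infinite-dimensional tail and the finitely many exceptional elements: one must establish simultaneously that $\{\hat g_n^0\}_{n>p}$ is a Riesz basic sequence with the concrete closed span $Y$ and that the remaining $p$ vectors are independent modulo $Y$, and the latter is not elementary — it is recovered only from the global completeness supplied by Lemma~\ref{lem:gn0_complete_minimal}. The remaining verifications (the bounded invertibility of $U$, the Bessel bound for $\{g_k^0\}_{k>p}$ together with convergence of the series in \eqref{defgh}, and the bounded invertibility of $\{a_n\}\mapsto\{b_n\}$) are routine once the asymptotics $\|\tilde g_n^0\| = O(|\rho_n|^{-1})$ and $\{|\rho_n|^{-1}\}\in l_2$ furnished by $\{\la_n\}_{n\ge1}\in\mathcal A$ are in hand.
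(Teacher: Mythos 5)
Your proposal is correct, and its first step coincides with the paper's: the paper also upgrades Lemma~\ref{lem:xi_complete} to the Riesz-basis property of $\{\xi_n\}_{n>p}$, via the Gram identity \eqref{sin_eq_xi}, which is the same fact as your $\xi_n=U^*(\sin\rho_n t)$ with $UU^*=2I$. Where you genuinely diverge is in the second step. The paper introduces the auxiliary Riesz basis $\{u_n\}_{n\ge1}$ (coordinate vectors in the $\mathbb C^p$ block plus $[\xi_n,0]$ for $n>p$), expands $f=\sum b_k g_k^0=\sum a_k u_k$, and proves the two-sided inequality \eqref{gn0_ineq} by explicit coefficient comparison: the bound $\|a\|\le C_1\|b\|$ from \eqref{a_from_b}--\eqref{a_1_p}, and the reverse bound by inverting the $p\times p$ system \eqref{slau}, whose matrix is nonsingular because otherwise $\{g_n^0\}_{n\ge1}$ would fail to be minimal, contradicting Lemma~\ref{lem:gn0_complete_minimal}. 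You instead pass to the triangular modification $\{\hat g_n^0\}$ of \eqref{defgh} (which the paper uses only inside the minimality proof), identify the closed span of the tail as the graph of a bounded operator $B:\mathcal X\to\mathbb C^p$, and obtain $\mathcal H_p=Y\oplus Z$ with $Z$ finite-dimensional; the key nondegeneracy — linear independence of $\{\hat{\tilde g}_n^0\}_{n\le p}$ in $\mathbb C^p$, which is literally the statement $|A|\neq0$ for the paper's matrix — you extract from the \emph{completeness} half of Lemma~\ref{lem:gn0_complete_minimal} via the quotient $\mathcal H_p/Y\cong\mathbb C^p$, rather than from the \emph{minimality} half. Your route buys a cleaner structural picture (Riesz basic tail plus a bounded finite-dimensional complement, with the convergence and Bessel issues made explicit), and it is slightly more economical in that it never needs minimality as a separate intermediate statement; the paper's route is more computational but produces the two-sided inequality \eqref{gn0_ineq} directly in the form needed later. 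Both arguments rest on exactly the same two inputs — the Riesz basicity of $\{\xi_n\}_{n>p}$ and Lemma~\ref{lem:gn0_complete_minimal} — so the difference is one of packaging rather than of substance, and I see no gap in yours.
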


\begin{proof}
The proof consists of the two steps:
\begin{enumerate}
\item If $\{\sin\rho_n t\}_{n > p}$ is a Riesz basis in $L_2(0, \pi)$, then $\{\xi_n\}_{n>p}$ is a Riesz basis in $\mathcal{X}$;
\item If $\{\sin\rho_n t\}_{n > p}$ is complete in $L_2(0, \pi)$ and $\{\xi_n\}_{n>p}$ is a Riesz basis in $\mathcal{X}$, then $\{ g_n^0\}_{n \ge 1}$ is a Riesz basis in $\mathcal{H}_p$.
\end{enumerate}

\textbf{Step 1.} According to Lemma~\ref{lem:xi_complete}, we get that $\{\xi_n\}_{n>p}$ is complete in $\mathcal{X}$. Therefore, for proving that that $\{\xi_n\}_{n>p}$ is a Riesz basis in $\mathcal{X}$, it is sufficient to show that there exist positive numbers $M_1$ and $M_2$ such that, for any sequence $\{b_n\}$, the following two-side inequality holds:
\begin{gather}\label{xi_ineq}
M_1 \sum\limits_{n=p+1}^{N_0}|b_n|^2 \le \Bigg\|\sum\limits_{n=p+1}^{N_0}b_n\xi_n\Bigg\|^2_{\mathcal{X}} \le M_2 \sum\limits_{n=p+1}^{N_0}|b_n|^2.
\end{gather}

As the system $\{\sin\rho_nt\}_{n > p}$ is a Riesz basis in $L_2(0, 2\pi)$, so the following inequality holds for some positive numbers $M_1$ and $M_2$ and any sequence $\{b_n\}$:
\begin{gather}\label{sin_ineq}
M_1 \sum\limits_{n=p+1}^{N_0}|b_n|^2 \le \Bigg\|\sum\limits_{n=p+1}^{N_0}b_n\sin\rho_nt\Bigg\|^2_{L_2(0, 2\pi)} \le M_2 \sum\limits_{n=p+1}^{N_0}|b_n|^2.
\end{gather}

Calculations show that
\begin{gather}\label{sin_scal}
\Bigg\|\sum\limits_{n=p+1}^{N_0}b_n\sin\rho_nt\Bigg\|^2_{L_2(0, 2\pi)} = \sum\limits_{n=p+1}^{N_0}\sum\limits_{k=p+1}^{N_0}b_nb_k(\sin\rho_nt, \sin\rho_kt)_{L_2(0, 2\pi)}, \\ \label{xi_scal}
\Bigg\|\sum\limits_{n=p+1}^{N_0}b_n\xi_n\Bigg\|^2_{\mathcal{X}} = \sum\limits_{n=p+1}^{N_0}\sum\limits_{k=p+1}^{N_0}b_nb_k(\xi_n, \xi_k)_{\mathcal{X}}.
\end{gather}

Calculating the scalar products, we pass to the equality
\begin{gather}\label{sin_eq_xi}
(\sin\rho_nt, \sin\rho_kt)_{L_2(0, 2\pi)} = 2(\xi_n, \xi_k)_{\mathcal{X}}.
\end{gather}

Consequently, substituting \eqref{sin_scal} and \eqref{sin_eq_xi} into \eqref{sin_ineq} and taking \eqref{xi_scal} into account, we arrive at \eqref{xi_ineq}.

\smallskip

\textbf{Step 2.} According to Lemma~\ref{lem:gn0_complete_minimal}, the system $\{ g_n^0\}_{n \ge 1}$ is complete in $\mathcal{H}_p$. Therefore, for proving that $\{ g_n^0\}_{n \ge 1}$ is a Riesz basis in $\mathcal{H}_p$, it is sufficient to show that 
\begin{gather}\label{gn0_ineq}
M_1 \sum\limits_{n=1}^{N_0}|b_n|^2 \le \Bigg\|\sum\limits_{n=1}^{N_0}b_ng_n^0\Bigg\|^2_{\mathcal{H}_p} \le M_2 \sum\limits_{n=1}^{N_0}|b_n|^2
\end{gather}
for some positive constants $M_1$ and $M_2$ and any sequence $\{ b_n \}$.

Let us introduce an additional system $\{u_n\}_{n \ge 1} \subset \mathcal H_p$ of the elements
\begin{gather*}
[0, 0, 1, 0, \dots, 0], [0, 0, 0, 1, \dots, 0], \dots, [0, 0, 0, 0, \dots, 1], \\
[\cos\rho_n\pi\sin\rho_nt, -\sin\rho_n\pi\cos\rho_nt, 0, 0, \dots, 0], \quad n > p,
\end{gather*}
and show that $\{ u_n \}_{n \ge 1}$ is a Riesz basis in $\mathcal{H}_p$. It follows from the completeness of the system $\{\xi_n\}_{n>p}$ in $\mathcal{X}$ that the system $\{u_n\}_{n \ge 1}$ is complete in $\mathcal{H}_p$.

Calculations show that
$$
\Bigg\|\sum\limits_{n=1}^{N_0}b_nu_n\Bigg\|^2_{\mathcal{H}_p} = \sum\limits_{n=1}^{p}|b_n|^2 + \Bigg\|\sum\limits_{n=p+1}^{N_0}b_n\xi_n\Bigg\|^2_{\mathcal{X}}.
$$

Consequently, adding $\sum\limits_{n=1}^{p}|b_n|^2$ to all the three parts of \eqref{xi_ineq}, we get that
$$
M_1 \sum\limits_{n=1}^{N_0}|b_n|^2 \le \Bigg\|\sum\limits_{n=1}^{N_0}b_nu_n\Bigg\|^2_{\mathcal{H}_p} \le M_2 \sum\limits_{n=1}^{N_0}|b_n|^2,
$$
so $\{u_n\}_{n \ge 1}$ is a Riesz basis in $\mathcal{H}_p$.

Next, consider an arbitrary element $f = \sum\limits_{k=1}^\infty b_kg_k^0$, where $\{b_k\} \in l_2$. It can be shown that $f \in \mathcal{H}_p$. So, we can expand it with respect to the Riesz basis $\{u_n\}_{n \ge 1}$:
$$
f = \sum\limits_{k=1}^\infty a_ku_k, \quad \{a_k\} \in l_2,
$$
or in element-wise form
\begin{gather}\notag
f = \Bigg[\sum\limits_{n=1}^\infty b_n\cos\rho_n\pi\sin\rho_nt, -\sum\limits_{n=1}^\infty b_n\sin\rho_n\pi\cos\rho_nt, \sum\limits_{n=1}^\infty b_n\dfrac{\cos\rho_n\pi}{\rho_n^p}, -\sum\limits_{n=1}^\infty b_n\dfrac{\sin\rho_n\pi}{\rho_n^{p-1}}, \dots, \\ \label{f_expand} -\sum\limits_{n=1}^\infty b_n\dfrac{\sin\rho_n\pi}{\rho_n^{2}}, 
 \sum\limits_{n=1}^\infty b_n\dfrac{\cos\rho_n\pi}{\rho_n} \Bigg]= \Bigg[\sum\limits_{n=1}^\infty a_n\cos\rho_n\pi\sin\rho_nt, -\sum\limits_{n=1}^\infty a_n\sin\rho_n\pi\cos\rho_nt, a_1, \dots, a_p\Bigg].
\end{gather}

Now, expand the set of the vector-functions $\{[\cos\rho_n\pi\sin\rho_nt, -\sin\rho_n\pi\cos\rho_nt]\}_{n=1}^p$ with respect to the Riesz basis $\{\xi_n\}_{n > p}$:

\begin{gather}\label{xi_dec}
[\cos\rho_k\pi\sin\rho_kt, -\sin\rho_k\pi\sin\rho_kt] = \sum\limits_{n=p+1}^{\infty}c_n^k\xi_n, \quad \{c_n^k\}\in l_2, \quad k=\overline{1, p}.
\end{gather}

Then from \eqref{f_expand} we get:
\begin{gather*}
\sum\limits_{n=1}^{\infty}b_n\cos\rho_n\pi\sin\rho_nt = \sum\limits_{n=p+1}^{\infty}\Big(b_n+\sum\limits_{k=1}^{p}b_kc_n^k\Big)\cos\rho_n\pi\sin\rho_nt = \sum\limits_{n=p+1}^{\infty}a_n\cos\rho_n\pi\sin\rho_nt.
\end{gather*}
Consequently,
\begin{gather}\label{a_from_b}
a_n = b_n+\sum\limits_{k=1}^{p}b_kc_n^k, \quad n > p.
\end{gather}

The relation \eqref{f_expand} yields
\begin{gather}\label{a_1_p}
a_1 = \sum\limits_{n=1}^{\infty} b_n\dfrac{\cos\rho_n\pi}{\rho_n^p}, 
\quad a_2 = -\sum\limits_{n=1}^\infty b_n\dfrac{\sin\rho_n\pi}{\rho_n^{p-1}}, \quad
\dots, \quad a_p = \sum\limits_{n=1}^{\infty} b_n\dfrac{\cos\rho_n\pi}{\rho_n},
\end{gather}
Using \eqref{a_from_b}, \eqref{a_1_p}, the property $\{ \la_n \}_{n \ge 1} \in \mathcal A$, and taking $\{a_n\}, \{b_n\}, \{c_n^k\} \in l_2$ for any $k=\overline{1, p}$ into account, we obtain the estimate
\begin{gather}\label{a_less_b}
\sum\limits_{n=1}^{\infty}|a_n|^2 \le C_1\sum\limits_{n=1}^{\infty}|b_n|^2,
\end{gather}
where the constant $C_1$ does not depend on $\{ a_n\}$ and $\{ b_n \}$.

Next, from \eqref{a_1_p}, using \eqref{a_from_b}, we get
\begin{gather*}
a_1 = \sum\limits_{n=1}^{\infty} b_n\dfrac{\cos\rho_n\pi}{\rho_n^p} = \sum\limits_{k=1}^{p} b_k\Bigg(\dfrac{\cos\rho_k\pi}{\rho_k^p} - \sum\limits_{n=p+1}c_n^k \dfrac{\cos\rho_n\pi}{\rho_n^p} \Bigg) + \sum\limits_{n=p+1}^{\infty} a_n\dfrac{\cos\rho_n\pi}{\rho_n^p}.
\end{gather*}
For $\{a_n\}_{n=2}^{p}$, we obtain equations of the same form. So, we pass to the following system of linear equations:
\begin{gather}\label{slau}
Ab=a,
\end{gather}
where
\begin{gather}\notag
A =
  \begin{bmatrix}
    \dfrac{\cos\rho_1\pi}{\rho_1^p} - \sum\limits_{n=p+1}^{\infty} c_n^1\dfrac{\cos\rho_n\pi}{\rho_n^p} & \cdots & \dfrac{\cos\rho_p\pi}{\rho_p^p} - \sum\limits_{n=p+1}^{\infty} c_n^p\dfrac{\cos\rho_n\pi}{\rho_n^p}\\
    \vdots & \ddots & \vdots \\
    \dfrac{\cos\rho_1\pi}{\rho_1} - \sum\limits_{n=p+1}^{\infty} c_n^1\dfrac{\cos\rho_n\pi}{\rho_n} & \cdots & \dfrac{\cos\rho_p\pi}{\rho_p} - \sum\limits_{n=p+1}^{\infty} c_n^p\dfrac{\cos\rho_n\pi}{\rho_n}
  \end{bmatrix}, \\ \notag
  b = [b_1, \dots, b_p]^T,\\ \notag
  a = \Bigg[a_1 - \sum\limits_{n=p+1}^{\infty} a_n \dfrac{\cos\rho_n\pi}{\rho_n^p}, \dots, a_p - \sum\limits_{n=p+1}^{\infty} a_n \dfrac{\cos\rho_n\pi}{\rho_n}\Bigg]^T,
\end{gather}
where the symbol $T$ means the matrix transpose.

Let us prove the unique solvability of the system \eqref{slau}. Note that the matrix $A^T$ consists of the rows $\tilde g_n^0$, $n = \overline{1,p}$, which were defined in \eqref{formgh}. Suppose that $|A| = 0$. Then, there exists a non-trivial linear combination of its rows that equals zero:
$$
\sum\limits_{k=1}^{p} a_k\tilde g_k^0 = 0.
$$

Due to \eqref{xi_dec}, we get that
$$
\sum\limits_{k=1}^{p} a_k\Big(\xi_k - \sum\limits_{n=p+1}^{\infty} c_n^k \xi_n\Big) = 0.
$$

Then, the analogous linear combination of $g_n^0$ has the following form:
$$
\sum\limits_{k=1}^{p} a_k\Big(g_k^0 - \sum\limits_{n=1}^{\infty} c_n^k g_n^0\Big) = \Bigg[\sum\limits_{k=1}^{p} a_k\Big(\xi_k - \sum\limits_{n=p+1}^{\infty} c_n^k \xi_n\Big), \sum\limits_{k=1}^{p} a_k\tilde g_k^0\Bigg] = 0.
$$

It means that the system $\{g_n^0\}_{n \ge 1}$ is not minimal in $\mathcal{H}_p$. But this conflicts with Lemma~\ref{lem:gn0_complete_minimal}.
Consequently, $|A| \neq 0$ and the system \eqref{slau} is uniquely solvable. Then, we find $b = A^{-1}a$ and show that
\begin{gather}\label{b_less_a}
\sum\limits_{n=1}^{\infty}|b_n|^2 \le C_2\sum\limits_{n=1}^{\infty}|a_n|^2.
\end{gather}

Using \eqref{a_less_b}, \eqref{b_less_a}, and \eqref{xi_ineq}, we get \eqref{gn0_ineq}. This yields the claim.
\end{proof}

To prove the second part of Theorem~\ref{thm:basis}, we need to pass from $\{g_n^0\}_{n \ge 1}$ to $\{ v_n \}_{n \ge 1}$.

\begin{lem}\label{gn0_to_v}
Suppose that the system $\{\sin\rho_n t\}_{n > p}$ is complete in $L_2(0, 2\pi)$, the system $\{g_n^0\}_{n \ge 1}$ is a Riesz basis in $\mathcal{H}_p$, and $\{\la_n\}_{n \ge 1} \in \mathcal{A}$. Then $\{ v_n \}_{n \ge 1}$ is also a Riesz basis in $\mathcal{H}_p$.
\end{lem}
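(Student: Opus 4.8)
The plan is to exhibit $\{v_n\}_{n\ge1}$, after a canonical scaling, as an $l_2$-perturbation (equivalently, a Hilbert--Schmidt perturbation) of a fixed Riesz basis of $\mathcal H_p$, and then to conclude by the Bari stability theorem; the completeness that Bari requires is already supplied by Lemmas~\ref{lem:compl_v} and~\ref{lem:suff}.

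\emph{Reduction to $\{g_n^0\}$.} By the proof of Lemma~\ref{lem:compl_v}, $f_2(\la_n)=-\Delta_1(\la_n)f_1(\la_n)/\Delta_0(\la_n)$ when $f_1(\la_n)\neq0$, and symmetrically when $f_2(\la_n)\neq0$. Substituting this into \eqref{defv1} and comparing with \eqref{g_def} slot by slot at $\la=\la_n$ gives $v_n=c_nD g_n$ with $c_n\neq0$ (equal to $f_1(\la_n)/\Delta_0(\la_n)$, or to $-f_2(\la_n)/\Delta_1(\la_n)$) and $D$ a fixed diagonal $\pm1$-operator on $\mathcal H_p$, hence unitary. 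Thus it suffices to prove that $h_n:=\rho_n^{-(2p-1)}g_n$ ($n\ge1$) is a Riesz basis in $\mathcal H_p$: passing back to $\{v_n\}$ only multiplies the vectors by the nonzero scalars $c_n\rho_n^{2p-1}$ and applies the unitary $D$, which turns a Riesz basis into an unconditional basis (a Riesz basis after normalization). Note also that $\{v_n\}$, hence $\{g_n\}$ and $\{h_n\}$, is complete in $\mathcal H_p$ by Lemmas~\ref{lem:compl_v} and~\ref{lem:suff}, since $\{\sin\rho_n t\}_{n>p}$ is complete in $L_2(0,2\pi)$.

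\emph{The perturbation estimate.} By \eqref{Delta1}--\eqref{Delta0} (equivalently \eqref{est_Delta0}--\eqref{est_Delta1}),
\[
\frac{\Delta_0(\la_n)}{\rho_n^{\,p-1}}=\cos\rho_n\pi+\varepsilon_n^{(0)},\qquad \frac{\Delta_1(\la_n)}{\rho_n^{\,p}}=-\sin\rho_n\pi+\varepsilon_n^{(1)},
\]
where $\varepsilon_n^{(0)}=\int_0^\pi\mathcal G(t)\cos\rho_n t\,dt+O(|\rho_n|^{-1})$ and $\varepsilon_n^{(1)}=\int_0^\pi\mathcal J(t)\sin\rho_n t\,dt+O(|\rho_n|^{-1})$. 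Inserting these into $\rho_n^{-(2p-1)}g_n$ and matching with $g_n^0$ yields $h_n=\widetilde D g_n^0+r_n$, where $\widetilde D$ is another fixed diagonal $\pm1$-operator and the entries of $r_n$ are, up to fixed bounded factors, among $\varepsilon_n^{(0)}\sin\rho_n t$, $\varepsilon_n^{(1)}\cos\rho_n t$, $\varepsilon_n^{(0)}\rho_n^{-j}$, $\varepsilon_n^{(1)}\rho_n^{-j}$ with $j\ge1$. Since $\{\la_n\}\in\mathcal A$ gives $\Im\rho_n=O(1)$, the functions $\sin\rho_n t,\cos\rho_n t$ are uniformly bounded on $(0,\pi)$, so $\|r_n\|_{\mathcal H_p}^2\le C(|\varepsilon_n^{(0)}|^2+|\varepsilon_n^{(1)}|^2)$. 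The $O(|\rho_n|^{-1})$ parts are square-summable because $\{\rho_n^{-1}\}\in l_2$; the samples $\int_0^\pi\mathcal J(t)\sin\rho_n t\,dt$ and $\int_0^\pi\mathcal G(t)\cos\rho_n t\,dt$ are square-summable by the Paley--Wiener sampling estimates for the sequence $\{\rho_n\}$ that accompany the Riesz-basis property of $\{\sin\rho_n t\}_{n>p}$ in $L_2(0,2\pi)$ (cf.\ \cite{FrYu,BondStabAn}). Finitely many $\la_n$ of small modulus are harmless, so $\sum_{n\ge1}\|r_n\|_{\mathcal H_p}^2<\infty$.

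\emph{Conclusion.} As $\{\widetilde D g_n^0\}_{n\ge1}$ is a Riesz basis (a unitary image of $\{g_n^0\}$), write $\widetilde D g_n^0=S\varphi_n$ with $S$ boundedly invertible and $\{\varphi_n\}$ orthonormal; then $h_n=S(\varphi_n+S^{-1}r_n)$, and the operator $\varphi_n\mapsto\varphi_n+S^{-1}r_n$ is $I+K$ with $K$ Hilbert--Schmidt, hence Fredholm of index zero. Completeness of $\{h_n\}$ forces this operator to be surjective, hence invertible, so $\{h_n\}$ is a Riesz basis; by the first step, $\{v_n\}_{n\ge1}$ is an unconditional basis in $\mathcal H_p$ (a Riesz basis once normalized). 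I expect the principal obstacle to be exactly the $l_2$-summability of the cosine samples $\{\int_0^\pi\mathcal G(t)\cos\rho_n t\,dt\}$: unlike the sine samples, these are not covered directly by the Bessel property of $\{\sin\rho_n t\}_{n>p}$ in $L_2(0,2\pi)$ and require the Paley--Wiener machinery (passage to an exponential system on a suitable interval); the slot-by-slot bookkeeping in the perturbation step is routine but must be carried out separately for odd and even $p$.
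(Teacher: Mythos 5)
Your proposal is correct and follows essentially the same route as the paper: reduce $\{v_n\}$ to $\{g_n\}$ via nonzero scalar multiples (the paper writes $g_n=k_nv_n$; your diagonal sign operator $D$ is a more careful bookkeeping of the same fact), show via \eqref{est_Delta0}--\eqref{est_Delta1} that $\{\rho_n^{-(2p-1)}g_n\}$ is quadratically close to the Riesz basis $\{g_n^0\}$, and combine this with the completeness supplied by Lemmas~\ref{lem:compl_v} and~\ref{lem:suff} to conclude unconditional basicity. The $l_2$-summability of the sine and cosine samples that you flag as the main obstacle is exactly what the paper absorbs into the $\varkappa_\pi(\rho_n)$ notation together with the hypothesis $\{\la_n\}_{n\ge1}\in\mathcal A$, so no genuinely different idea is involved.
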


\begin{proof}
Due to \eqref{g_def} and the estimates \eqref{est_Delta0}-\eqref{est_Delta1}, we get
\begin{align*}
g_n(t) = &\rho_n^{2p-1}g_n^0\\
&+ \rho_n^{2p-1}\Big[ \sin\rho t \varkappa_\pi(\rho_n), -\cos\rho t \varkappa_\pi(\rho_n), \frac{\varkappa_\pi(\rho_n)}{\rho_n^p}, -\frac{\varkappa_\pi(\rho_n)}{\rho_n^{p-1}}, ..., , -\frac{\varkappa_\pi(\rho_n)}{\rho_n^2}, \frac{\varkappa_\pi(\rho_n)}{\rho_n} \Big]\\
& + O(|\rho_n|^{2p-2}e^{2\pi|\tau|}).
\end{align*}
Therefore, according to the asymptotical condition $\{ \la_n \} \in \mathcal A$, we obtain 
$$
\sum_{n = 1}^{\infty} \| \rho_n^{-(2p-1)} g_n(t) - g_n^0(t) \|^2_{\mathcal H_p} < \infty.
$$

Thus, the sequence $\left\{ \rho_n^{-(2p-1)} g_n(t) \right\}_{n \ge 1}$ is $l_2$-close to the Riesz basis $\{ g_n^0\}_{n \ge 1}$. Taking the completeness of $\{ g_n \}_{n \ge 1}$ into account (see Lemma~\ref{lem:suff}), 
we conclude that $\{g_n\}_{n \ge 1}$ is an unconditional basis in $\mathcal{H}_p$.

Due to Lemma~\ref{lem:compl_v}, we get that the system $\{ v_n \}_{n \ge 1}$ is complete in $\mathcal{H}_p$. Simple calculations show that $g_n = k_nv_n$, $n \ge 1$, where $\{k_n\}_{n \ge 1}$ are some non-zero numbers. Together with the completeness of $\{ v_n \}_{n \ge 1}$ in $\mathcal{H}_p$, this yields the claim.
\end{proof}

\section{Local solvability and stability} \label{sec:stab}

The main goal of this section is to prove Theorem~\ref{thm:stab}. 
Let the problem $\tilde L = L(\tilde \sigma, \tilde p_1, \tilde p_2, f_1, f_2)$ and its simple subspectrum $\{ \tilde \la_n \}_{n \ge 1}$ fulfill the conditions of Theorem~\ref{thm:stab}. Let the functions $v(t, \la)$ and $w(\la)$ be constructed by the formulas \eqref{defv1}, \eqref{defw1} or \eqref{defv2}, \eqref{defw2} depending on parity of the integer $p$. Here we provide the proof only for odd $p$, but in the opposite case all steps are the similar. Throughout this section, we denote by the same symbol $C$ various constants depending only on $\tilde L$ and $\{ \tilde \la_n \}_{n \ge 1}$.

\begin{lem}\label{lem:cond}
Under Conditions 1--3 of Theorem~\ref{thm:stab}, there exists a positive constant $a_4$ such that 
\begin{gather} \label{ineq_nm}
|\tilde\rho_n - \tilde\rho_m| \ge a_4, \quad n \neq m. 
\end{gather}
Moreover, the following estimates hold for $|\rho - \tilde \rho_n| \le a_2$:
\begin{gather}\label{est_v_le}
\|v(t, \rho^2)\|_{\mathcal{H}_p} \le C|\tilde\rho_n|^{\alpha_n+p}, \\ \label{est_w_le}
|w(\rho^2)| \le C|\tilde\rho_n|^{\alpha_n+p}.
\end{gather}
\end{lem}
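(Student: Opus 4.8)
The plan is to prove the three assertions separately, starting with the separation estimate \eqref{ineq_nm}, which is a quick consequence of the hypotheses. Since $\{\tilde\la_n\}_{n\ge 1}\in\mathcal S$, the $\tilde\la_n$ are pairwise distinct, hence so are the $\tilde\rho_n$. By Condition~2, $\{\tilde\la_n\}\in\mathcal A$, so $\Im\tilde\rho_n=O(1)$ and $\{\tilde\rho_n^{-1}\}\in l_2$; together with the asymptotics that the square-roots $\tilde\rho_n$ of a subspectrum satisfy (of the form $\tilde\rho_n = n - \text{const} + \varkappa_n$ with $\{\varkappa_n\}\in l_2$, analogous to \eqref{as_prove}), only finitely many of the $\tilde\rho_n$ can lie in any bounded region, and the "tail" is uniformly separated. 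Since the $\tilde\rho_n$ are all distinct, the finitely many close pairs still have a positive minimum gap, giving \eqref{ineq_nm} with some $a_4>0$.

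Next I would prove \eqref{est_v_le}. Recall $v(t,\rho^2)$ from \eqref{defv1}: its entries are $f_1(\la)\rho^{p}\sin\rho t$, $f_2(\la)\rho^{p-1}\cos\rho t$, and the scalar entries $f_1(\la)\rho^{2k}$, $f_2(\la)\rho^{2k}$ for $0\le k\le N_1$ (roughly). The point is to bound $\|v(t,\rho^2)\|_{\mathcal H_p}$ when $|\rho-\tilde\rho_n|\le a_2$. On this disc, $|\tau|=|\Im\rho|$ is bounded (since $\Im\tilde\rho_n=O(1)$ and $|\rho-\tilde\rho_n|\le a_2$), so $|\sin\rho t|, |\cos\rho t|\le C$ uniformly for $t\in[0,\pi]$; hence the $L_2(0,\pi)$-norms of the first two entries are $\le C|\rho|^{p}|f_1(\la)|$ and $\le C|\rho|^{p-1}|f_2(\la)|$. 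Also $|\rho|\le|\tilde\rho_n|+a_2\le C|\tilde\rho_n|$. Now invoke the upper estimates in \eqref{estf}: $|f_1(\rho^2)|\le a_1|\tilde\rho_n|^{\alpha_n}$ and $|f_2(\rho^2)|\le a_1|\tilde\rho_n|^{\alpha_n+1}$. Combining, the first entry contributes $\le C|\tilde\rho_n|^{p+\alpha_n}$, the second $\le C|\tilde\rho_n|^{p-1}\cdot|\tilde\rho_n|^{\alpha_n+1}=C|\tilde\rho_n|^{p+\alpha_n}$, and each scalar entry $\le C|\tilde\rho_n|^{2k}|\tilde\rho_n|^{\alpha_n} \le C|\tilde\rho_n|^{p-1+\alpha_n}$ or $\le C|\tilde\rho_n|^{2k}|\tilde\rho_n|^{\alpha_n+1}\le C|\tilde\rho_n|^{p+\alpha_n}$ (using $2k\le p-1$), all dominated by $C|\tilde\rho_n|^{\alpha_n+p}$. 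Taking the square root of the sum of squares of finitely many such terms gives \eqref{est_v_le}.

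Finally, \eqref{est_w_le} follows the same way: from \eqref{defw1}, $w(\rho^2)=f_1(\la)\rho^p\sin\rho\pi - f_2(\la)\rho^{p-1}\cos\rho\pi$, and on the disc $|\rho-\tilde\rho_n|\le a_2$ we have $|\sin\rho\pi|,|\cos\rho\pi|\le Ce^{\pi|\tau|}\le C$, $|\rho|\le C|\tilde\rho_n|$, and the bounds on $f_1,f_2$ from \eqref{estf}, so $|w(\rho^2)|\le C|\tilde\rho_n|^p|\tilde\rho_n|^{\alpha_n} + C|\tilde\rho_n|^{p-1}|\tilde\rho_n|^{\alpha_n+1} = C|\tilde\rho_n|^{\alpha_n+p}$. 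I expect the main (though still modest) obstacle to be the bookkeeping in \eqref{est_v_le}: one must check that \emph{every} component of $v$—the two function components and all $p$ scalar components with their various powers $\rho^{2k}$ paired with $f_1$ or $f_2$—is majorized by the single bound $C|\tilde\rho_n|^{\alpha_n+p}$, which requires carefully tracking the shifts in exponents coming from the mismatched powers attached to $f_1$ versus $f_2$ in the definition \eqref{defv1}. The only external input needed beyond \eqref{estf} is the uniform boundedness of $\Im\rho$ on the discs, which is exactly what Condition~2 (via $\{\tilde\la_n\}\in\mathcal A$) and the bound $|\rho-\tilde\rho_n|\le a_2$ provide.
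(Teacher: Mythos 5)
Your derivations of \eqref{est_v_le} and \eqref{est_w_le} are correct and coincide with the paper's: on the disc $|\rho-\tilde\rho_n|\le a_2$ one has $|\tau|\le C$ (by $\{\tilde\la_n\}\in\mathcal A$), hence $|\sin\rho t|,|\cos\rho t|\le C$ and $|\rho|\le C|\tilde\rho_n|$, and the upper bounds in \eqref{estf} then majorize every component of $v$ and both terms of $w$ by $C|\tilde\rho_n|^{\alpha_n+p}$.

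However, your argument for the separation estimate \eqref{ineq_nm} has a genuine gap. You invoke asymptotics of the form $\tilde\rho_n=n-\mathrm{const}+\varkappa_n$ with $\{\varkappa_n\}\in l_2$ ``analogous to \eqref{as_prove}'', but no such asymptotics are available here: $\{\tilde\la_n\}_{n\ge1}$ is an \emph{arbitrary} subspectrum subject only to $\mathcal S$ (pairwise distinctness) and $\mathcal A$ ($\Im\tilde\rho_n=O(1)$ and $\{\tilde\rho_n^{-1}\}\in l_2$). The condition $\{\tilde\rho_n^{-1}\}\in l_2$ does force $|\tilde\rho_n|\to\infty$, so only finitely many $\tilde\rho_n$ lie in a bounded set, but it in no way implies that the tail is uniformly separated: e.g.\ $\tilde\rho_{2k}=k$, $\tilde\rho_{2k+1}=k+1/k$ satisfies $\mathcal S$ and $\mathcal A$ yet has gaps tending to zero. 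The uniform gap must come from Condition~1, which your proof never uses for this step. The paper argues by contradiction: if $|\tilde\rho_{i_k}-\tilde\rho_{j_k}|=o(1)$, then the Schwartz lemma applied to $v(t,\tilde\rho_{i_k}^2)-v(t,\rho^2)$ together with the upper bound \eqref{est_v_le} gives $\|\tilde v_{i_k}-\tilde v_{j_k}\|_{\mathcal H_p}=o(|\tilde\rho_{i_k}|^{\alpha_{i_k}+p})$, while the lower bound in \eqref{estf} yields $\|\tilde v_n\|_{\mathcal H_p}\ge C|\tilde\rho_n|^{\alpha_n+p}$; hence the normalized vectors $\tilde v_{i_k}/\|\tilde v_{i_k}\|_{\mathcal H_p}$ and $\tilde v_{j_k}/\|\tilde v_{j_k}\|_{\mathcal H_p}$ become arbitrarily close, contradicting the Riesz basicity of the normalized system guaranteed by Condition~1. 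You would need to supply this (or an equivalent) argument; note also that it relies on the lower bound $\|v(t,\tilde\rho_n^2)\|_{\mathcal H_p}\ge C|\tilde\rho_n|^{\alpha_n+p}$, which your proposal does not establish either.
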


\begin{proof}
First of all, Condition 1 implies that $\Big\{\dfrac{\tilde v_n}{\|\tilde v_n\|_{\mathcal{H}_p}}\Big\}_{n \ge 1}$ is a Riesz basis in $\mathcal{H}_p$.

Let us prove the estimate $|\tilde\rho_n - \tilde\rho_m| \ge a_4$, $n \ne m$, by contradiction. Suppose that there exist sequences of indices $\{i_k\}$ and $\{j_k\}$, $i_k, j_k \ge 1$, such that
\begin{gather}\label{ijk_est}
|\tilde\rho_{i_k} - \tilde\rho_{j_k}| = o(1), \quad k \to \infty.
\end{gather}

Consider the norm of $v(t, \rho^2)$ in $\mathcal{H}_p$:
\begin{align*}
\|v(t, \rho^2)\|_{\mathcal{H}_p} = & \Bigg( \int_0^\pi {\big( |f_1(\la)^2||\rho|^{2p}\sin\rho t \sin\overline{\rho}t + |f_2(\la)^2||\rho|^{2p-2}\cos\rho t \cos\overline{\rho}t \big)}dt \\ & +
|f_1(\la)|^2\sum\limits_{n=0}^{N_1}|\rho|^{4n} + |f_2(\la)|^2\sum\limits_{n=0}^{N_1-1}|\rho|^{4n} \Bigg)^{\frac{1}{2}} \\ \le & |f_1(\la)||\rho|^p\sqrt{\pi\max_{t \in [0, \pi]}(\sin\rho t \sin\overline{\rho}t)} 
+ |f_2(\la)||\rho|^{p-1}\sqrt{\pi\max_{t \in [0, \pi]}(\cos\rho t \cos\overline{\rho}t)} \\ & + |f_1(\la)|(1+|\rho|^2+ \dots +|\rho|^{p-1}) + |f_2(\la)|(1+|\rho|^2+\dots+|\rho|^{p-3}).
\end{align*}

Under the condition $|\rho-\tilde\rho_n| \le a_2$ and Condition~3, we get \eqref{est_v_le}. In the same way, we obtain the estimate for $w(\rho^2)$:
\begin{gather*}
|w(\rho^2)| = |f_1(\la)\rho^p\sin\rho\pi - f_2(\la)\rho^{p-1}\cos\rho\pi| \le |f_1(\la)||\rho|^p|\sin\rho\pi| + |f_2(\la)||\rho|^{p-1}|\cos\rho\pi|.
\end{gather*}
Due to $|\rho-\tilde\rho_n| \le a_2$ and Condition~3, we arrive at \eqref{est_w_le}.

According to the Schwartz lemma for the vector function $(v(t, \tilde\rho_n^2) - v(t, \tilde\rho^2))$, we get
$$
\|v(t, \tilde\rho_n^2) - v(t, \rho^2)\|_{\mathcal{H}_p} \le C|\tilde\rho_n|^{\alpha_n+p}|\rho-\tilde\rho_n|, \quad |\rho-\tilde\rho_n| \le a_2,
$$
and, due to \eqref{ijk_est},
\begin{gather}\label{v_diff_est}
\|v(t, \tilde\rho_{i_k}^2) - v(t, \tilde\rho_{j_k}^2)\|_{\mathcal{H}_p} = o(|\tilde\rho_{i_k}|^{\alpha_{i_k}+p}), \quad k \to \infty.
\end{gather}

Using Condition~2, we can get that
\begin{gather}\label{v_est}
\|v(t, \tilde\rho_n^2)\|_{\mathcal{H}_p} \ge C|\tilde\rho_n|^{\alpha_n + p}.
\end{gather}

Now, using \eqref{v_diff_est} and \eqref{v_est}, we pass to the estimate
$$
\dfrac{\tilde v_{i_k}}{\|\tilde v_{i_k}\|_{\mathcal{H}_p}} - \dfrac{\tilde v_{i_k}}{\|\tilde v_{i_k}\|_{\mathcal{H}_p}} \to 0, \quad k\to\infty,
$$
that contradicts to the Riesz basicity of the system $\Big\{\dfrac{\tilde v_n}{\|\tilde v_n\|_{\mathcal{H}_p}}\Big\}_{n \ge 1}$. This yields the claim.
\end{proof}

Now we are ready to prove Theorem~\ref{thm:stab}.

\begin{proof}[Proof of Theorem~\ref{thm:stab}]

Let $\{ \la_n \}_{n \ge 1}$ be a perturbed subspectrum of the problem $\tilde L$
such that \eqref{est_omega} is fulfilled for sufficiently small $\varepsilon > 0$. Then, it follows from \eqref{est_omega} and \eqref{ineq_nm} that $\{ \la_n \}_{n \ge 1} \in \mathcal{S}$.

Define the vector-functions $\{v_n\}_{n \ge 1}$ and the numbers $\{w_n\}_{n \ge 1}$ as follows:
\begin{gather} \label{defvn}
v_n(t) = v(t, \la_n), \quad w_n = w(\la_n).
\end{gather}
Consider the system of equations
\begin{gather}\label{stab_system}
(u, v_n)_{\mathcal{H}_p} = w_n, \quad n \ge 1,
\end{gather}
with respect to the unknown vector function $u \in \mathcal H_p$.

Using the estimates \eqref{est_v_le}--\eqref{est_w_le} and the Schwartz lemma, we obtain for sufficiently small $\varepsilon$ that
\begin{gather}\label{stab_est3}
\|v_n - \tilde v_n\|_{\mathcal{H}_p} \le C|\tilde\rho_n|^{\alpha_n+p}|\rho_n - \tilde\rho_n|, \\ \label{stab_est4}
|w_n - \tilde w_n| \le C|\tilde\rho_n|^{\alpha_n+p}|\rho_n - \tilde\rho_n|.
\end{gather}

So, using \eqref{v_est}, \eqref{stab_est3}, and \eqref{stab_est4}, we get the important estimates:
\begin{gather}\label{stab_est5}
\Bigg( \sum\limits_{n=1}^{\infty} \|\tilde v_n\|_{\mathcal{H}_p}^{-2} \|\tilde v_n - v_n\|_{\mathcal{H}_p}^{2} \Bigg)^{\frac{1}{2}} \le C\Omega, \\ \label{stab_est6}
\Bigg( \sum\limits_{n=1}^{\infty} \|\tilde v_n\|_{\mathcal{H}_p}^{-2} |\tilde w_n - w_n|^{2} \Bigg)^{\frac{1}{2}} \le C\Omega,
\end{gather}
where $\Omega = \Bigg(\sum\limits_{n=1}^{\infty}|\tilde\rho_n - \rho_n|^2\Bigg)^{\frac{1}{2}}$.

Then, for sufficiently small $\varepsilon > 0$, the system $\Bigg\{ \dfrac{v_n}{\|v_n\|_{\mathcal{H}_p}} \Bigg\}_{n \ge 1}$is a Riesz basis in $\mathcal H_p$. Consequently, there exists a unique element $u \in \mathcal{H}_p$ satisfying \eqref{stab_system}.

Next, consider the following values:
\begin{gather*}
Z := \Bigg( \sum\limits_{n=1}^{\infty} \Bigg\| \dfrac{\tilde v_n}{\| \tilde v_n \|_{\mathcal{H}_p}} - \dfrac{v_n}{\| v_n \|_{\mathcal{H}_p}} \Bigg\|^2 \Bigg)^{\frac{1}{2}}, \\
T := \Bigg( \sum\limits_{n=1}^{\infty} \Bigg| \dfrac{\tilde w_n}{\| \tilde v_n \|_{\mathcal{H}_p}} - \dfrac{w_n}{\| v_n \|_{\mathcal{H}_p}} \Bigg|^2 \Bigg)^{\frac{1}{2}}.
\end{gather*}

Using \eqref{est_v_le}, \eqref{est_w_le}, \eqref{v_est}, \eqref{stab_est3}, and \eqref{stab_est4}, we can get
\begin{gather}\label{ZT_est}
Z \le C\Omega, \quad T \le C\Omega.
\end{gather}

Then, due to Lemma 5 from \cite{Bond2018}, using the estimates \eqref{ZT_est}, we get that
$$
\|u - \tilde u\|_{\mathcal{H}_p} \le C(Z+T) \le C\Omega.
$$

Let the data $\{\mathcal J(t), \mathcal G(t), A_1, \dots, A_p \}$ be extracted from the vector $u$ due to \eqref{defu}. Then, for sufficiently small $\varepsilon > 0$ ($\Omega \le \varepsilon$),
Proposition~\ref{cauchy_thm} implies that there exist a complex-valued function $\sigma(x) \in L_2(0, \pi)$ and polynomials $(p_1(\la), p_2(\la)) \in \mathcal R_p$ related to the generalized Cauchy data $\{\mathcal J(t), \mathcal G(t), A_1, \dots, A_p \}$. So, we can build the problem $L = L(\sigma, p_1, p_2, f_1, f_2)$. Moreover, due the \eqref{Delta1}-\eqref{Delta0} and Proposition~\ref{cauchy_thm} estimates \eqref{coef_estimates} are also correct.

As the last step, we need to show that $\{\la_n\}_{n \ge 1}$ is a subspectrum of the problem $L$.
Due the \eqref{defu}, \eqref{defv1}, \eqref{defw1} and \eqref{defvn} equation \eqref{stab_system} can be rewritten in the form
\begin{gather} \notag
\int_0^\pi {\Big( \mathcal{J}(t)f_1(\la_n)\rho_n^p\sin\rho_nt + \mathcal{G}(t)f_2(\la_n)\rho_n^{p-1}\cos\rho_nt \Big) \, dt}  \\ \label{stab_sys_exp}
+ f_1(\la_n)\sum\limits_{k=0}^{N_1}A_{2k+1}\rho^{2k} + f_2(\la_n)\sum\limits_{k=0}^{N_1-1}A_{2k+2}\rho^{2k} = f_1(\la_n)\rho_n^p\sin\rho_n\pi - f_2(\la_n)\rho_n^{p-1}\cos\rho_n\pi.
\end{gather}

Define the functions $\Delta_0(\la)$ and $\Delta_1(\la)$ by \eqref{Delta1}-\eqref{Delta0}. Then equation \eqref{stab_sys_exp} takes the form
$$
\big( f_1(\la)\Delta_1(\la) + f_2(\la)\Delta_0(\la) \big)\Big|_{\la=\la_n} = 0.
$$

So, $\{\la_n\}_{n \ge 1}$ are zeros of the function $\Delta(\la) = f_1(\la)\Delta_1(\la) + f_2(\la)\Delta_0(\la) $. As $\{ \mathcal{J}(t), \mathcal{G}(t), A_1, ..., A_{p}\}$ are generalized Cauchy data of the problem $L$ and $f_1(\la) \equiv \tilde f_1(\la)$, $f_2(\la) \equiv \tilde f_2(\la)$, then $\Delta(\la)$ is the characteristic function of the problem $L$. Consequently, $\{\la_n\}_{n \ge 1}$ is a subspectrum of the problem $L$.
\end{proof}

\section{Hochstadt-Lieberman-type problem} \label{sec:h-l}

In this section, we apply our results to the Hochstadt-Lieberman-type problem with polynomial dependence on the spectral parameter in the boundary conditions.

Introduce the following boundary value problem $\mathcal{L} = \mathcal{L}(\sigma, p_1, p_2, r_1, r_2)$:
\begin{gather}\label{h-l:eq}
-y''(x)+q(x)y(x)=\la y(x), \quad x\in(0, 2\pi), \quad q  = \sigma' \in W_2^{-1}(0, 2\pi)\\ \notag
p_1(\la)y^{[1]}(0)+p_2(\la)y(0) = 0, \quad  (p_1(\la), p_2(\la)) \in \mathcal{R}_p, \\ \notag
r_1(\la)y^{[1]}(2\pi)+r_2(\la)y(2\pi) = 0, \quad (r_1(\la), r_2(\la)) \in \mathcal{R}_r.
\end{gather}

For definiteness, suppose that the degrees $p$ and $r$ are odd. Other cases require minor technical changes.

Let us denote the spectrum of the problem $\mathcal{L}$ by $\{\eta_n\}_{n \ge 1}$ and consider the following Hochstadt-Lieberman-type problem:

\begin{ip}\label{ip:h-l}
Suppose that the integer $p$, the polynomials $r_1(\la)$, $r_2(\la)$ and the function $\sigma(x)$ for $x \in (\pi, 2\pi)$ are known a priori. Given a spectrum $\{\eta_n\}$ of the problem $\mathcal{L}$, find $\sigma(x)$ for $x \in (0, \pi)$ and the polynomials $p_1(\la)$, $p_2(\la)$.
\end{ip}

Define the functions $\varphi(x, \la)$ and $\psi(x, \la)$ as the solutions of the equation~\eqref{h-l:eq}, satisfying the initial conditions
$$
\varphi(0, \la) = p_1(\la), \quad \varphi^{[1]}(0, \la) = -p_2(\la), \quad \psi(2\pi, \la) = r_1(\la), \quad \psi^{[1]}(2\pi, \la) = -r_2(\la).
$$

It can be shown similarly to \cite{Chit} that the eigenvalues of the problem $\mathcal{L}$ coincide with zeros of the following characteristic function:
$$
\Delta(\la) = -\psi(\pi, \la)\Delta_1(\la) + \psi^{[1]}(\pi, \la)\Delta_0(\la).
$$
This means that $\{\eta_n\}_{n \ge 1}$ coincide with eigenvalues of the problem $L = L(\sigma, p_1, p_2, f_1, f_2)$ with
\begin{gather}\label{def_f1-2}
f_1(\la) = -\psi(\pi, \la), \quad f_2(\la) = \psi^{[1]}(\pi, \la).
\end{gather}

Next, we suppose that $\{\eta_n\}_{n \ge 1} \in \mathcal{S}$ and $\eta_n \neq 0$, $n \ge 1$. In general, a finite number of eigenvalues can be multiple, but reasoning is more technically complicated.

\begin{lem}\label{lem:h-l_est}
Let $f_1(\la)$ and $f_2(\la)$ be the entire functions, defined by formulas \eqref{def_f1-2}, and $\{\eta_n\}_{n \ge 1} \in \mathcal{S}$ be the spectrum of boundary value problem $\mathcal{L}$. Then
\begin{enumerate}
\item The system $\{ \sin\sqrt{\eta_n}t \}_{n \ge p+r+1}$ is a Riesz basis in $L_2(0, 2\pi)$;
\item $\{\eta_n\}_{n \ge 1} \in \mathcal{A}$;
\item $f_1(\eta_n) \neq 0$ or $f_2(\eta_n) \neq 0$ for any $n \ge 1$;
\item The estimates \eqref{estf} hold for $\la_n = \eta_n$ and $\alpha_n = r-1$ for $n \ge 1$.
\end{enumerate}
\end{lem}

\begin{proof}

It can be shown that, in this case, the spectrum fulfills the following asymptotics \cite{Chit}:
$$
\sqrt{\eta_n} = \dfrac{n}{2} - \dfrac{p+r}{4} + \varkappa_n, \quad n \ge 1,
$$
where $\{\varkappa_n\} \in l_2$. This asymptotics implies the assertions 1 and 2 of the lemma.

Assertion 3 also holds, because the functions $\psi(\pi, \la)$ and $\psi^{[1]}(\pi, \la)$ do not have common zeros. Indeed, if there exist $\eta \in \mathbb{C}$, such that $\psi(\pi, \eta) = \psi^{[1]}(\pi, \eta) = 0$, then $\psi(x, \eta)$ is the trivial solution of equation \eqref{h-l:eq}, which is impossible.

It can be shown, that function $\psi(x, \la)$ can be represented as
$$
\psi(x, \la) = r_1(\la)C(2\pi-x, \la) + r_2(\la)S(2\pi-x, \la),
$$
where $S(x, \la)$ and $C(x, \la)$ are the solution of equation~\eqref{h-l:eq} satisfying the initial conditions \eqref{initphi}.

Then, the following estimates for functions $f_1(\la)$ and $f_2(\la)$ are correct:
\begin{gather*}
f_1(\la) = -\rho^{r-1}\cos\rho\pi-\rho^{r-1}\varkappa_\pi(\rho) + O(|\rho|^{r-3}e^{\pi|\tau|}),\\
f_2(\la) = \rho^{r}\sin\rho\pi+\rho^{r}\varkappa_\pi(\rho) + O(|\rho|^{r-2}e^{\pi|\tau|}),
\end{gather*}
where $\rho^2 = \lambda$. Taking $|\rho-\sqrt{\eta_n}| \le C$ in account, we get
$$
|f_1(\rho^2)| \le C|\sqrt{\eta_n}|^{r-1}, \quad |f_2(\rho^2)| \le C|\sqrt{\eta_n}|^{r}.
$$

Next, we can get the last estimate, using simple calculations:
\begin{align*}
|f_1(\eta_n)|^2 + |\eta_n|^{-1}|f_2(\eta_n)|^2 & \ge |f_1^2(\eta_n) + \eta_n^{-1}f_2^2(\eta_n)| \\
& = |\eta_n^{r-1} + \eta_n^{r-1} \varkappa_{2\pi}(\sqrt{\eta_n}) + O(|\rho|^{2r-4})e^{2\pi|\tau|}| \ge C|\eta_n|^{r-1}.
\end{align*}

So, the estimates \eqref{estf} are valid with $\alpha_n = r-1$. This yields the claim.
\end{proof}

Due to Lemma~\ref{lem:h-l_est}, we can apply Theorems~\ref{thm:basis} and~\ref{thm:stab} to Inverse Problem~\ref{ip:h-l} and so obtain the following corollaries.

\begin{cor} \label{cor:uniq}
Let $\{\eta_n\}_{n \ge 1}$ and $\{\tilde \eta_n\}_{n \ge 1}$ be the spectra of the problems $\mathcal{L}=\mathcal{L}(\sigma, p_1, p_2, r_1, r_2)$ and $\tilde {\mathcal{L}}=\mathcal{L}(\tilde{\sigma}, \tilde p_1, \tilde p_2, \tilde r_1, \tilde r_2)$, respectively. Assume that $p = \tilde p$, $r_j(\la) = \tilde r_j(\la)$, $j = 1, 2$, and $\sigma(x) = \tilde \sigma(x)$ a.e. on $(\pi, 2\pi)$. Moreover, suppose that $r \ge p$ and $\eta_n = \tilde \eta_n$, $n \ge \frac{r-p}{2} + 1$. Then, $\sigma(x) = \tilde \sigma(x)$ a.e. on $(0, \pi)$ and $p_j(\la) = \tilde p_j(\la)$, $j=1, 2$.
\end{cor}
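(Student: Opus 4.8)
The plan is to reduce Inverse Problem~\ref{ip:h-l} to Inverse Problem~\ref{ip:main} for the auxiliary problem $L = L(\sigma, p_1, p_2, f_1, f_2)$ with $f_1, f_2$ given by \eqref{def_f1-2}, and then to invoke the first assertion of Theorem~\ref{thm:basis} together with Lemma~\ref{lem:h-l_est}. No new analysis beyond bookkeeping should be required.

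First I would use the a priori information. Since $\sigma(x) = \tilde\sigma(x)$ a.e. on $(\pi, 2\pi)$ and $r_j(\la) \equiv \tilde r_j(\la)$, $j = 1, 2$, the solution $\psi(x, \la)$ of \eqref{h-l:eq} on $[\pi, 2\pi]$ determined by its Cauchy data at $x = 2\pi$ coincides with $\tilde\psi(x, \la)$; hence $f_1(\la) = -\psi(\pi, \la) \equiv \tilde f_1(\la)$ and $f_2(\la) = \psi^{[1]}(\pi, \la) \equiv \tilde f_2(\la)$. Together with $p = \tilde p$, this places us in the setting of Section~\ref{sec:main}: the problems $L$ and $\tilde L$ share the integer $p$ and the entire functions $f_1, f_2$, and by the factorization of the characteristic function of $\mathcal L$ the sets $\{\eta_n\}_{n \ge 1}$ and $\{\tilde\eta_n\}_{n \ge 1}$ are (full) spectra of $L$ and $\tilde L$, respectively.

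Next I would single out the common tail of the two spectra. As $p$ and $r$ are odd with $r \ge p$, the number $(r-p)/2$ is a nonnegative integer; set $\la_k := \eta_{k + (r-p)/2}$ and $\tilde\la_k := \tilde\eta_{k + (r-p)/2}$ for $k \ge 1$. These are subspectra of $L$ and $\tilde L$, and by hypothesis $\la_k = \tilde\la_k$ for all $k \ge 1$. Since $\{\eta_n\}_{n \ge 1} \in \mathcal S$, also $\{\la_k\}_{k \ge 1} \in \mathcal S$; and by assertion 3 of Lemma~\ref{lem:h-l_est} we have $f_1(\la_k) \neq 0$ or $f_2(\la_k) \neq 0$ for every $k$. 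It remains to verify the completeness hypothesis of the first part of Theorem~\ref{thm:basis}, namely that $\{\sin\sqrt{\la_k}\,t\}_{k > p}$ is complete in $L_2(0, 2\pi)$. Under the substitution $n = k + (r-p)/2$, the condition $k > p$ becomes $n \ge (p+r)/2 + 1$, so $\{\sin\sqrt{\la_k}\,t\}_{k > p} = \{\sin\sqrt{\eta_n}\,t\}_{n \ge (p+r)/2 + 1}$. Since $(p+r)/2 + 1 \le p + r + 1$, this family contains $\{\sin\sqrt{\eta_n}\,t\}_{n \ge p+r+1}$, which by assertion 1 of Lemma~\ref{lem:h-l_est} is a Riesz basis, hence complete, in $L_2(0, 2\pi)$; a superset of a complete system is complete, so $\{\sin\sqrt{\la_k}\,t\}_{k > p}$ is complete.

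Finally I would apply the first assertion of Theorem~\ref{thm:basis}: the sequence $\{v_k\}_{k \ge 1}$ constructed from $p$, $f_1$, $f_2$ and $\{\la_k\}_{k \ge 1}$ is complete in $\mathcal H_p$, so by Theorem~\ref{thm:uniq_ns} the solution of Inverse Problem~\ref{ip:main} with these data is unique. Because $L$ and $\tilde L$ have the same $p$, the same $f_1, f_2$ and the same subspectrum $\{\la_k\}_{k \ge 1} = \{\tilde\la_k\}_{k \ge 1}$, uniqueness gives $\sigma(x) = \tilde\sigma(x)$ a.e. on $(0, \pi)$ and $p_j(\la) \equiv \tilde p_j(\la)$, $j = 1, 2$, which is the claim. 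I expect the only genuinely delicate point to be the index bookkeeping in the previous paragraph — checking that discarding the first $(r-p)/2$ eigenvalues and then the $p$ further eigenvalues demanded by Theorem~\ref{thm:basis} still leaves a family of sines that is \emph{complete} (not merely minimal) in $L_2(0, 2\pi)$ — together with the routine verification that $f_j \equiv \tilde f_j$ after the reduction; everything else is a direct citation of Lemma~\ref{lem:h-l_est} and Theorems~\ref{thm:uniq_ns} and~\ref{thm:basis}.
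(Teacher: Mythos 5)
Your proposal is correct and takes essentially the same route as the paper: the paper obtains Corollary~\ref{cor:uniq} precisely by reducing $\mathcal L$ to $L(\sigma,p_1,p_2,f_1,f_2)$ with $f_1,f_2$ from \eqref{def_f1-2} and then citing Lemma~\ref{lem:h-l_est} together with the first part of Theorem~\ref{thm:basis}, stating this in one line without writing out the details. Your explicit checks --- that $f_j\equiv\tilde f_j$ follows from the shared data on $(\pi,2\pi)$, and that the index shift turns $\{\sin\sqrt{\la_k}\,t\}_{k>p}$ into $\{\sin\sqrt{\eta_n}\,t\}_{n\ge (p+r)/2+1}$, a superset of the complete system provided by Lemma~\ref{lem:h-l_est} --- merely supply the bookkeeping the paper leaves implicit.
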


\begin{cor} \label{cor:stab}
For any $(\tilde p_1(\la), \tilde p_2(\la)) \in \mathcal R_p$, $\tilde \sigma(x) \in L_2(0, \pi)$ there exists $\varepsilon > 0$ such that for any complex numbers $\{\eta_n\}_{n \ge \frac{r-p}{2}+1}$ satisfying the condition
$$
\Omega = \Bigg(\sum\limits_{n=\frac{r-p}{2}+1}^{\infty}|\sqrt{\tilde{\eta_n}} - \sqrt{\eta_n}|^2\Bigg)^{\frac{1}{2}} \le \varepsilon,
$$
there exist complex-valued function $\sigma(x) \in L_2(0, \pi)$ and polynomials $(p_1(\la), p_2(\la)) \in \mathcal R_p$, such that $\{\eta_n\}_{n \ge \frac{r-p}{2}+1}$ is a subspectrum of the problem $\mathcal{L} = \mathcal{L}(\sigma, p_1, p_2, r_1, r_2)$. Moreover
$$
\|\sigma(x) - \tilde\sigma(x)\|_{L_2(0, \pi)} \le C\Omega, \quad |a_j - \tilde a_j| \le C\Omega, \quad |b_j - \tilde b_j| \le C\Omega,
$$
where $C$ depends only on $r_1(\la)$, $r_2(\la)$ and $\{\tilde\eta_n\}_{n \ge \frac{r-p}{2}+1}$
\end{cor}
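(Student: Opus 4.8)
\textbf{Proof proposal for Corollary~\ref{cor:stab}.}
The plan is to obtain the statement as a direct consequence of Theorem~\ref{thm:stab} applied to the reduced problem $L = L(\sigma, p_1, p_2, f_1, f_2)$ with the entire functions $f_1(\la) = -\psi(\pi,\la)$ and $f_2(\la) = \psi^{[1]}(\pi,\la)$ from \eqref{def_f1-2}. The key point of the reduction is that $f_1$ and $f_2$ are built only from the a priori known data $r_1$, $r_2$ and $\sigma(x)$ on $(\pi,2\pi)$; hence they are one and the same fixed pair of entire functions for the reference and the perturbed problems, so the standing assumption $f_j\equiv\tilde f_j$ of Theorem~\ref{thm:stab} holds automatically. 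Moreover the characteristic function of $\mathcal L(\sigma,p_1,p_2,r_1,r_2)$ equals $f_1(\la)\Delta_1(\la)+f_2(\la)\Delta_0(\la)$, which is exactly the characteristic function of $L(\sigma,p_1,p_2,f_1,f_2)$; therefore a subspectrum of $\mathcal L$ is a subspectrum of $L$ and recovering $\sigma|_{(0,\pi)}$, $p_1$, $p_2$ is the same task in both settings.

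First I would fix the reference problem $\tilde{\mathcal L}=\mathcal L(\tilde\sigma,\tilde p_1,\tilde p_2,r_1,r_2)$ for the given $(\tilde p_1,\tilde p_2)\in\mathcal R_p$ and $\tilde\sigma\in L_2(0,\pi)$ (with $\tilde\sigma|_{(\pi,2\pi)}$ the known part), and reindex the subspectra by $\tilde\la_k:=\tilde\eta_{k+(r-p)/2}$ and $\la_k:=\eta_{k+(r-p)/2}$ for $k\ge1$, which is legitimate since $(r-p)/2$ is a nonnegative integer ($p$, $r$ odd, $r\ge p$). Then I would verify Conditions~1--3 of Theorem~\ref{thm:stab} for $\tilde L=L(\tilde\sigma,\tilde p_1,\tilde p_2,f_1,f_2)$ and the reindexed subspectrum $\{\tilde\la_k\}_{k\ge1}$ using Lemma~\ref{lem:h-l_est}: the spectral asymptotics recorded in its proof give, after the shift, $\sqrt{\tilde\la_k}=\tfrac{k-p}{2}+\varkappa_k$ with $\{\varkappa_k\}\in l_2$, so that $\{\sin\sqrt{\tilde\la_k}\,t\}_{k>p}$ is an $l_2$-perturbation of the orthogonal basis $\{\sin\tfrac{jt}{2}\}_{j\ge1}$ of $L_2(0,2\pi)$ and hence a Riesz basis there (assertion~1 of Lemma~\ref{lem:h-l_est}); together with $\{\tilde\la_k\}\in\mathcal A$ (assertion~2) and $f_1(\tilde\la_k)\ne0$ or $f_2(\tilde\la_k)\ne0$ (assertion~3), Theorem~\ref{thm:basis}(2) yields that $\{\tilde v_k\}_{k\ge1}$ is an unconditional basis in $\mathcal H_p$, i.e.\ Condition~1. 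Condition~2 is assertion~2, and Condition~3, the estimates \eqref{estf}, is assertion~4 with $\alpha_k=r-1$.

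With all hypotheses in force I would invoke Theorem~\ref{thm:stab}: there is $\varepsilon>0$ such that if $\Omega=\big(\sum_{k\ge1}|\sqrt{\tilde\la_k}-\sqrt{\la_k}|^2\big)^{1/2}\le\varepsilon$, then there exist $\sigma\in L_2(0,\pi)$ and $(p_1,p_2)\in\mathcal R_p$ with $\{\la_k\}_{k\ge1}$ a subspectrum of $L(\sigma,p_1,p_2,f_1,f_2)$ and $\|\sigma-\tilde\sigma\|_{L_2(0,\pi)}\le C\Omega$, $|a_j-\tilde a_j|\le C\Omega$, $|b_j-\tilde b_j|\le C\Omega$. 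Undoing the reindexing, $\Omega=\big(\sum_{n\ge(r-p)/2+1}|\sqrt{\tilde\eta_n}-\sqrt{\eta_n}|^2\big)^{1/2}$, and by the identification of characteristic functions from the first paragraph $\{\eta_n\}_{n\ge(r-p)/2+1}$ is a subspectrum of $\mathcal L(\sigma,p_1,p_2,r_1,r_2)$; the constant $C$ of Theorem~\ref{thm:stab} depends only on $\tilde L$ and $\{\tilde\la_k\}$, and since $\tilde L$ is fixed by $r_1$, $r_2$, $\tilde\sigma|_{(\pi,2\pi)}$ and, through the uniqueness Corollary~\ref{cor:uniq}, by $\{\tilde\eta_n\}_{n\ge(r-p)/2+1}$, this is the stated dependence. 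I expect the only genuinely delicate point to be the index bookkeeping in the reduction: one must drop exactly the first $(r-p)/2$ eigenvalues so that the truncated sine system $\{\sin\sqrt{\tilde\la_k}\,t\}_{k>p}$ is a \emph{Riesz basis} (not merely complete) in $L_2(0,2\pi)$, which is what makes the unconditional-basis Condition~1 of Theorem~\ref{thm:stab} — and in particular the minimality of the retained subspectrum — available; everything else is a mechanical application of Lemma~\ref{lem:h-l_est} and Theorems~\ref{thm:basis}--\ref{thm:stab}.
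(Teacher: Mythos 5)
Your proposal is correct and follows exactly the route the paper takes: the paper derives Corollary~\ref{cor:stab} by reindexing the subspectrum with a shift of $\tfrac{r-p}{2}$, verifying the hypotheses of Theorem~\ref{thm:stab} through Lemma~\ref{lem:h-l_est} and Theorem~\ref{thm:basis}(2), and identifying the characteristic function of $\mathcal L$ with that of $L(\sigma,p_1,p_2,f_1,f_2)$ via \eqref{def_f1-2}. Your write-up in fact supplies more of the index bookkeeping than the paper, which states the conclusion directly after Lemma~\ref{lem:h-l_est}.
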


Thus, due to Corollary~\ref{cor:uniq}, any $\frac{r - p}{2}$ eigenvalues can be excluded from the spectrum $\{ \eta_n \}_{n \ge 1}$ in order to keep the uniqueness for solution of Inverse Problem~\ref{ip:h-l}. This coincides with the result of \cite[Theorem 3]{Chit} for regular potentials. The exclusion of more eigenvalues implies the non-uniqueness. This follows from the local solvability for the Hochstadt-Lieberman-type inverse problem given by Corollary~\ref{cor:stab}.

\medskip

\textbf{Funding}: This work was supported by Grant 24-71-10003 of the Russian Science Foundation, https://rscf.ru/en/project/24-71-10003/.

\medskip

\textbf{Competing interests}: The paper has no conflict of interests.

% \medskip

% \textbf{Data Availability}: Data sharing not applicable to this article as no datasets were generated or analysed during the current study

\medskip

\medskip

\noindent Egor Evgenevich Chitorkin \\
1. Institute of IT and Cybernetics, Samara National Research University,\\ 
Moskovskoye Shosse 34, Samara 443086, Russia. \\
2. Department of Mechanics and Mathematics, Saratov State University, \\
Astrakhanskaya 83, Saratov 410012, Russia. \\
e-mail: {\it chitorkin.ee@ssau.ru} 

\medskip

\noindent Natalia Pavlovna Bondarenko \\
1. Department of Applied Mathematics and Physics, Samara National Research University,\\
Moskovskoye Shosse 34, Samara 443086, Russia.\\
2. S.M. Nikolskii Mathematical Institute, Peoples' Friendship University of Russia (RUDN University), 6 Miklukho-Maklaya Street, Moscow, 117198, Russia.\\
3. Moscow Center of Fundamental and Applied Mathematics, Lomonosov Moscow State University, Moscow 119991, Russia.\\
e-mail: {\it bondarenkonp@sgu.ru}\\

\begin{thebibliography}{99}

%COMMON

\bibitem{Mar77}
Marchenko, V.A. Sturm-Liouville Operators and Their Applications, Naukova Dumka, Kiev (1977) (Russian); English transl., Birkhauser (1986).

\bibitem{Lev84}
Levitan, B.M. Inverse Sturm-Liouville Problems, Nauka, Moscow (1984) (Russian); English transl., VNU Sci. Press, Utrecht (1987).

\bibitem{FY01}
Freiling, G.; Yurko, V. Inverse Sturm-Liouville Problems and Their Applications, Huntington, NY: Nova Science Publishers (2001).

\bibitem{Krav20}
Kravchenko, V.V. Direct and Inverse Sturm-Liouville Problems, Birkh\"auser, Cham (2020).

%POLY

\bibitem{Chu01}
Chugunova, M.V. Inverse spectral problem for the Sturm-Liouville operator with eigenvalue parameter dependent boundary conditions. Oper. Theory: Advan. Appl. 123, Birkhauser, Basel (2001), 187--194.

\bibitem{BindBr021}
Binding, P.A.; Browne, P.J.; Watson, B.A. Sturm-Liouville problems with boundary conditions rationally dependent on the eigenparameter. I, Proc. Edinb. Math. Soc. (2) 45 (2002), no. 3, 631--645.

\bibitem{BindBr022}
Binding, P.A.; Browne, P.J.; Watson, B.A. Sturm-Liouville problems with boundary conditions rationally dependent on the eigenparameter. II, J. Comput. Appl. Math. 148 (2002), no. 1, 147--168.

\bibitem{ChFr}
Chernozhukova, A.; Freiling, G. A uniqueness theorem for the boundary value problems with non-linear dependence on the spectral parameter in the boundary conditions, Inv. Probl. Sci. Eng. 17 (2009), no. 6, 777--785.

\bibitem{FrYu}
Freiling, G.; Yurko V. Inverse problems for Sturm-Liouville equations with boundary conditions polynomially dependent on the spectral parameter, Inverse Problems 26 (2010), no. 5, 055003.

\bibitem{Wang13}
Wang, Y.P. Uniqueness theorems for Sturm-Liouville operators with boundary conditions polynomially dependent on the eigenparameter from spectral data, Results Math. 63 (2013), 1131--1144.


\bibitem{YangWei18}
Yang, Y.; Wei, G. Inverse scattering problems for Sturm-Liouville operators with spectral parameter dependent on boundary conditions, Math. Notes 103 (2018), no.~1--2, 59--66.

\bibitem{Gul19}
Guliyev, N.J. Schr\"odinger operators with distributional potentials and boundary conditions dependent on the eigenvalue parameter, J. Math. Phys. 60 (2019), 063501.

\bibitem{GulHL}
Guliyev, N.J. Essentially isospectral transformations and their applications. Ann. Di Mat. Pura Ed Appl. 199 (2020), 1621--1648.

\bibitem{Gul20-ams}
Guliyev, N.J. On two-spectra inverse problems, Proc. AMS. 148 (2020), 4491--4502.

\bibitem{Gul23}
Guliyev, N.J. Inverse square singularities and eigenparameter-dependent boundary conditions are two sides of the same coin, 
The Quarterly J. Math. 74 (2023), no. 3, 889-910.

\bibitem{Chit}
Bondarenko, N.P.; Chitorkin, E.E. Inverse Sturm-Liouville problem with spectral parameter in the boundary conditions, Mathematics 11 (2023), no. 5, Article ID 1138.

\bibitem{ChitBond}
Chitorkin, E.E.; Bondarenko, N.P. Solving the inverse Sturm–Liouville problem with singular potential and with polynomials in the boundary conditions, Anal. Math. Phys. 13 (2023), Article ID 79.

\bibitem{ChitBondStab}
Chitorkin, E.E.; Bondarenko, N.P. Local solvability and stability for the inverse Sturm-Liouville problem with polynomials in the boundary conditions, Math. Meth. Appl. Sci. 47 (2024), 8881–8903. 

\bibitem{ChitBondStabMult}
Chitorkin, E.E.; Bondarenko, N.P. Inverse Sturm-Liouville problem with polynomials in the boundary condition and multiple eigenvalues, arXiv:2402.06215 [math.SP].

\bibitem{Ful77}
Fulton, C.T. Two-point boundary value problems with eigenvalue parameter contained in the boundary conditions, Proc. R. Soc. Edinburgh, Sect. A. 77 (1977), no.~3--4, 293--308.

%FUNCTIONS

%\bibitem{BondAn}
%Bondarenko N.P. Inverse Sturm-Liouville problem with analytical functions in the boundary condition, Open Math. 18 (2020), no. 1, 512-528.

\bibitem{BondPart}
Bondarenko, N.P. Partial inverse Sturm-Liouville problems, Mathematics 11 (2023), no. 10, Article ID 2408.

\bibitem{BondStabAn}
Bondarenko, N.P. Solvability and stability of the inverse Sturm–Liouville problem with analytical functions in the boundary condition, Math. Meth. Appl. Sci. 43 (2020), 7009--7021.

\bibitem{YangBondXu}
Yang, C.-F.; Bondarenko, N.P.; Xu, X.-C. An inverse problem for the Sturm–Liouville pencil with arbitrary entire functions in the boundary condition. Inverse Probl. Imag. (2020), no. 14, 153–169.

\bibitem{KuzCauchy}
Kuznetsova M.A. On recovering quadratic pencils with singular coefficients and entire functions in the boundary conditions, Math. Meth. Appl. Sci. 46 (2023), no. 5, 5086-5098.

\bibitem{HL}
Hochstadt, H.; Lieberman, B. An inverse Sturm–Liouville problem with mixed given data. SIAM J. Appl. Math. (1978), no. 34, 676--680.

\bibitem{GP17}
Gintides, D.; Pallikarakis, N. The inverse transmission eigenvalue problem for a discontinuous refractive index, Inverse Problems 33 (2017), 055006.

\bibitem{BCK20}
Buterin, S.A.; Choque-Rivero, A.E.; Kuznetsova, M.A. On a regularization approach to the inverse transmission eigenvalue problem, Inverse Problems 36 (2020), 105002.

%SINGULAR

%\bibitem{Ful80}
%Fulton, C.T. Singular eigenvalue problems with eigenvalue parameter contained in the boundary conditions, Proc. R. Soc. Edinburgh, Sect. A. 87 (1980), no.~1--2, 1--34.

\bibitem{SavShkal03}
Savchuk, A.M.; Shkalikov, A.A. Sturm-Liouville operators with distribution potentials, Transl. Moscow Math. Soc. 64 (2003), 143--192.

\bibitem{Hry03}
Hryniv, R.O.; Mykytyuk, Y.V. Inverse spectral problems for Sturm-Liouville operators with singular potentials, Inverse Problems 19 (2003), no.~3, 665--684.

\bibitem{Sav10}
Savchuk, A.M.; Shkalikov, A.A. Inverse problems for Sturm-Liouville operators with potentials in Sobolev spaces: uniform stability, Funct. Anal. Appl. 44 (2010), no.~4, 270--285.

\bibitem{BCNW18}
Bartels, C.; Currie, S.; Nowaczyk, M.; Watson, B.A. Sturm-Liouville problems with transfer condition Herglotz dependent on the eigenparameter: Hilbert space formulation, Integr. Equ. Oper. Theory 90 (2018), Article number: 34.

\bibitem{BCW21}
Bartels, C.; Currie, S.; Watson, B.A. Sturm-Liouville Problems with transfer condition Herglotz dependent on the eigenparameter: eigenvalue
asymptotics, Complex Anal. Oper. Theory 15 (2021), Article number: 71.

\bibitem{DGW23}
Du, G.; Gao, C.; Wang, J. Spectral analysis of discontinuous Sturm-Liouville operators with Herglotzs transmission, Electronic Research Archive 31 (2023), no. 4, 2108--2119.

% Cauchy data

\bibitem{RS92}
Rundell, W.; Sacks, E. Reconstruction techniques for classical inverse Sturm-Liouville problems, Math. Comput. 58 (1992), no.~197, 161--183.


\bibitem{Bond2018}
Bondarenko, N.P. A partial inverse problem for the Sturm–Liouville operator on a star-shaped graph, Anal.Math.Phys. 8 (2018), 155--168.
\end{thebibliography}
\end{document}